\documentclass[11pt]{amsart}
\usepackage{amssymb, latexsym}

\setlength{\textwidth}{460pt} \setlength{\hoffset}{-45pt}

\numberwithin{equation}{section}
\newtheorem{Thm}[equation]{Theorem}
\newtheorem{Prop}[equation]{Proposition}
\newtheorem{Cor}[equation]{Corollary}
\newtheorem{Lem}[equation]{Lemma}

\theoremstyle{definition}

\newtheorem{Rmk}[equation]{Remark}

\begin{document}

\title [VOA associated to type $G$ affine Lie algebras]
{Vertex Operator Algebras \\ associated to type $G$ affine Lie algebras}
\author[J. D. Axtell]{Jonathan D. Axtell}
\address{Department of Mathematics,
University of Connecticut, Storrs, CT 06269-3009, U.S.A.}
\email{axtell@math.uconn.edu}
\author[K.-H. Lee]{Kyu-Hwan Lee}
\email{khlee@math.uconn.edu}

\begin{abstract}
In this paper, we study representations of the vertex operator algebra $L(k,0)$ at one-third admissible levels $k=- \frac 5 3 , -\frac 4 3, -\frac 2 3$ for the affine algebra of type $G_2^{(1)}$. We first determine singular vectors and then obtain a description of the associative algebra $A(L(k,0))$ using the singular vectors. We then prove that there are only finitely many irreducible $A(L(k,0))$-modules from the category $\mathcal O$. Applying the $A(V)$-theory, we prove that there are only finitely many irreducible weak $L(k,0)$-modules from the category $\mathcal O$ and that such an $L(k,0)$-module is completely reducible. Our result supports the conjecture made by Adamovi{\' c} and Milas in \cite{AM}.
\end{abstract}

\maketitle

\section*{Introduction}

Vertex operator algebras (VOA) are mathematical counterparts of conformal field theory.  An important family of examples comes from representations of affine Lie algebras. More precisely, if we let $\hat{\frak g}$ be an affine Lie algebra, the irreducible $\hat{\frak g}$-module $L(k,0)$ with highest weight $k \Lambda_0$,  $k \in \mathbb{C}$, is a VOA, whenever $k \ne - h^\vee$, the negative of the dual Coxeter number.

The representation theory of $L(k,0)$ varies depending on values of $k \in \mathbb C$. If $k$ is a positive integer, the VOA $L(k,0)$ has only finitely many irreducible modules which coincide with the irreducible integrable $\hat{ \frak g}$-modules of level $k$, and the category of $\mathbb Z_+$-graded weak $L(k,0)$-modules is semisimple. If $k \notin \mathbb Q$ or $k < - h^\vee$, categories of $L(k,0)$-modules are quite different from those corresponding to positive integer values. (For example, see \cite{KL1, KL2}.)

For some rational values of $k$, the category of weak $L(k,0)$-modules which are in the category $\mathcal O$ as $\hat{\frak g}$-modules has a similar structure as the category of $\mathbb Z_+$-graded weak modules for positive integer values. Such rational values are called {\em admissible levels}. This notion was defined in the important works of Kac and Wakimoto (\cite{KW1, KW2}). Various cases have been studied with different generality by many authors. Adamovi{\' c} studied the case of admissible half-integer levels for type $C_l^{(1)}$ \cite{A1}. The case of all admissible levels of type $A_1^{(1)}$ was studied by Adamovi{\' c} and Milas \cite{AM}, and by Dong, Li and Mason \cite{DLM}. In his recent papers  \cite{P, P1}, Per{\v s}e studied admissible half-integer levels for type $A_l^{(1)}$ and $B_l^{(1)}$. 

In these developments, the $A(V)$-theory has played an important role. The associative algebra $A(V)$ associated to a vertex operator algebra $V$ was introduced by I. Frenkel and Y. Zhu (see \cite{FZ, Z}). It was shown that the irreducible modules of $A(V)$ are in one-to-one correspondence with irreducible $\mathbb{Z}_+$-graded weak modules of $V$. This fact gives an elegant method for the classification of representations of $V$, and was exploited in the works mentioned above.

In this paper, we study one-third admissible levels $- \frac 5 3 \Lambda_0, -\frac 4 3 \Lambda_0, -\frac 2 3 \Lambda_0$ for type $G_2^{(1)}$ adopting the method of \cite{A1,AM,MP,P,P1}. We first determine singular vectors (Proposition \ref{prop-sing}) and then obtain a description of the associative algebra $A(L(k,0))$  in Theorem \ref{thm-zhu-image} using the singular vectors for $k= -\frac 5 3, - \frac 4 3, - \frac 2 3$. By constructing some polynomials in the symmetric algebra of the Cartan subalgebra, we find all the possible highest-weights for irreducible $A(L(k,0))$-modules from the category $\mathcal O$ (Proposition \ref{prop-finite}). As a result, in each case of $k= -\frac 5 3, - \frac 4 3, - \frac 2 3$, we prove that there are only finitely many irreducible $A(L(k,0))$-modules from the category $\mathcal O$. Then it follows from the one-to-one correspondence in $A(V)$-theory that there are only finitely many irreducible weak $L(k,0)$-modules from the category $\mathcal O$ (Theorem \ref{thm-main}). In the case of irreducible $L(k,0)$-modules, our result provides a complete classification (Theorem \ref{thm-class}).
We also prove that such an $L(k,0)$-module is completely reducible (Theorem \ref{thm-main-second}). Thus the VOA $L(k,0)$ is {\em rational in the category $\mathcal O$} for $k= -\frac 5 3, - \frac 4 3, - \frac 2 3$. This result supports the conjecture made by Adamovi{\' c} and Milas in \cite{AM}, which suggests that $L(k,0)$'s are rational in the category $\mathcal O$ for all admissible levels $k$.

Although some of our results may be generalized to higher levels $k$, the first difficulty is in the drastic growth of complexity in computing singular vectors, as one can see in Appendix A. It seems to be necessary to find a different approach to the problem for higher levels.  The first-named author will consider singular vectors for other admissible weights in his subsequent paper.

\subsection*{Acknowledgments} We thank A. Feingold and M. Primc for helpful comments.

\vskip 1cm

\section{Preliminaries}
\subsection{Vertex operator algebras}
Let $(V, Y, \mathbf{1}, \omega)$ be a vertex operator algebra (VOA).  This means that $V$ is a $\mathbb{Z}$-graded vector space, $V= \bigoplus_{n\in \mathbb{Z}} V_{n}$, $Y$ is the {\it vertex operator map}, $Y(\cdot, x): V  \rightarrow (\mbox{End } V ) [[x,x^{-1}]]$, $\mathbf{1} \in V_{0}$ is the {\em vacuum vector}, and $\omega \in V_{2}$ is the {\em conformal vector}, all of which satisfy the usual axioms.  See \cite{ DLM, FLM, LL} for more details.  
 By an ideal in the vertex operator algebra $V$ we mean a subspace $I$ of $V$ satisfying $Y(a,x)I \subseteq I[[x,x^{-1}]]$ for any $a \in V$.  Given an ideal $I$ in $V$ such that $\mathbf{1} \notin I$, $\omega \notin I$, the quotient $V/I$ naturally becomes a vertex operator algebra.  Let $(M, Y_M)$ be a weak module for the vertex operator algebra $V$.  We thus have a vector space $M$ and a map $Y_M(\cdot, x): V \rightarrow (\mbox{End }M)[[x,x^{-1}]]$, which satisfy the usual set of axioms (cf. \cite{DLM}).  For a fixed element $a \in V$, we write $Y_M(a,x) = \sum_{m \in\mathbb{Z}} a(m) x^{-m-1} $, and for the conformal element $\omega$ we write $Y_M(\omega,x) = \sum_{m \in\mathbb{Z}} \omega(m) x^{-m-1}  =  \sum_{m \in\mathbb{Z}} L_m x^{-m-2}$.  In particular, $V$ is a weak module over itself with $Y=Y_V$.

  A {\it $\mathbb Z_+$-graded weak $V$-module} is a weak $V$-module $M$ together with a $\mathbb Z_+$-gradation $M= \bigoplus_{n=0}^\infty M_n$ such that
\[ a(m) M_n \subseteq M_{n+r-m-1} \qquad \text{ for } a \in V_{r} \mbox{ and }m,n,r \in \mathbb Z, \] where $M_n =0$ for $n<0$ by definition. A weak $V$-module $M$ is called a {\it $V$-module} if $L_0$ acts semisimply on $M$ with a decomposition into $L_0$-eigenspaces $M= \bigoplus_{\alpha \in \mathbb C} M_{\alpha}$ such that for any $\alpha \in \mathbb C$, $\dim M_{\alpha} < \infty$ and $M_{\alpha +n}=0$ for $n \in \mathbb Z$ sufficiently small. 

We define bilinear maps $*: V \times V \rightarrow V$ and $\circ: V \times V \rightarrow V$ as follows.  For any homogeneous $a \in V_n$, we write $\mathrm {deg}( a)=n$, and  for any $b \in V$,  we define \[ a*b = \mathrm{Res}_x \frac {(1+x)^{\mathrm{deg} \, a}} x \ Y(a,x) b, \] and \[ a \circ b = \mathrm{Res}_x \frac {(1+x)^{\mathrm{deg} \, a}} {x^2} \ Y(a,x) b, \] and extend both definitions by linearity to $V \times V$.  Denote by $O(V)$ the linear span of elements of the form $a \circ b$, and by $A(V)$ the quotient space $V/O(V)$.  For $a \in V$, denote by $[a]$ the image of $a$ under the projection of $V$ onto $A(V)$. The map $a \mapsto [a]$ will be called {\em Zhu's map}. The multiplication $*$ induces the multiplication on $A(V)$, and $A(V)$ has a structure of an associative algebra.  This fact can be found in \cite{FZ,Z}. 

\begin{Prop} \cite{FZ} \label{prop-FZ}
Let $I$ be an ideal of the vertex operator algebra $V$ such that $\mathbf{1} \notin I$, $\omega \notin I$.  Then the associative algebra $A(V/I)$ is isomorphic to $A(V)/A(I)$, where $A(I)$ is the image of $I$ in $A(V)$.
\end{Prop}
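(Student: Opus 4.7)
The plan is to construct a surjective algebra homomorphism $\tilde\pi: A(V) \to A(V/I)$ induced by the canonical projection $\pi: V \to V/I$, and then identify its kernel with $A(I) = (I + O(V))/O(V)$. Since $\mathbf{1}, \omega \notin I$, the quotient $V/I$ carries the structure of a VOA whose vertex operator satisfies $Y_{V/I}(\pi(a), x)\pi(b) = \pi(Y(a,x)b)$, so $A(V/I)$ is defined.

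First, the formulas defining $\circ$ and $*$ give
\[ \pi(a \circ b) = \pi(a) \circ \pi(b), \qquad \pi(a * b) = \pi(a) * \pi(b) \]
for all homogeneous $a$ and all $b \in V$, because $I$ is stable under $L_0 = \omega(1)$ and hence graded, so that $\mathrm{deg}\, a$ descends to nonzero homogeneous elements of $V/I$; then extend by linearity. In particular $\pi(O(V)) \subseteq O(V/I)$, so the composition $V \xrightarrow{\pi} V/I \twoheadrightarrow A(V/I)$ factors through a surjective algebra homomorphism $\tilde\pi: A(V) \to A(V/I)$, $[a] \mapsto [\pi(a)]$.

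Next, I compute $\ker\tilde\pi$. The inclusion $A(I) \subseteq \ker\tilde\pi$ is immediate from $\pi(I) = 0$. For the reverse inclusion, suppose $\tilde\pi([a]) = 0$, so $\pi(a) \in O(V/I)$. Write $\pi(a) = \sum_i \pi(b_i) \circ \pi(c_i) = \pi\bigl(\sum_i b_i \circ c_i\bigr)$ using the compatibility above; then $a - \sum_i b_i \circ c_i \in I$, whence $a \in I + O(V)$ and $[a] \in A(I)$. Thus $\ker \tilde\pi = A(I)$, and the first isomorphism theorem yields $A(V)/A(I) \cong A(V/I)$. As a byproduct, $A(I)$ is automatically a two-sided ideal of $A(V)$, so the quotient makes sense as an associative algebra.

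The main obstacle I anticipate is the kernel computation, specifically the step of lifting a relation in $V/I$ back to $V$ modulo $I$. The key ingredient there is the identity $\pi(b \circ c) = \pi(b) \circ \pi(c)$, whose verification in turn rests on $I$ being graded with respect to $L_0$ and on $Y_{V/I}$ being induced from $Y$. Everything else is formal manipulation with Zhu's map, the compatibility of $*$ and $\circ$ with $\pi$, and the ordinary first isomorphism theorem for associative algebras.
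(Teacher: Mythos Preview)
Your argument is correct and is the standard proof of this fact. Note, however, that the paper does not supply its own proof: the proposition is simply quoted from \cite{FZ} with no argument given, so there is nothing to compare against beyond observing that your approach matches the usual one in the literature.
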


Given a weak module $M$ and homogeneous $a \in V$, we recall that we write $Y_M(a,x) = \sum_{m\in \mathbb{Z}} a(m) x^{-m-1}$.  We define $o(a)= a({\mathrm{deg} \, a -1}) \in \mathrm{End} (M)$ and extend this map linearly to $V$.

\begin{Thm}\label{thm-Z} \cite{Z}
\begin{enumerate}
\item Let $M= \bigoplus_{n=0}^\infty M_n$ be a $\mathbb Z_+$-graded weak $V$-module.  Then $M_0$ is an $A(V)$-module defined as follows: \[ [a] \cdot v= o(a) v \] for any $a \in V$ and $v \in M_0$.

\item Let $U$ be an $A(V)$-module.  Then there exists a $\mathbb Z_+$-graded weak $V$-module $M$ such that the $A(V)$-modules $M_0$ and $U$ are isomorphic.

\item The equivalence classes of the irreducible $A(V)$-modules and the equivalence classes of the irreducible $\mathbb Z_+$-graded weak $V$-modules are in bijective correspondence.
\end{enumerate}
\end{Thm}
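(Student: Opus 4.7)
The plan is to handle the three parts in order, with (3) following formally from (1) and (2). For (1), I would verify that $a \mapsto o(a)|_{M_0}$ descends through the quotient $A(V) = V/O(V)$ and is multiplicative. Well-definedness reduces to showing $o(a \circ b)|_{M_0} = 0$ for homogeneous $a \in V$ and arbitrary $b$; expanding $a \circ b = \sum_{i \geq 0} \binom{\deg a}{i} a(i-2) b$ and applying the grading axiom $c(m) M_0 \subseteq M_{\deg c - m - 1}$ shows that each summand is annihilated by $o$ on $M_0$. For the multiplicativity $o(a * b) = o(a) o(b)$ on $M_0$, I would compare both sides using the Borcherds (Jacobi) identity and use the grading to discard the error terms, which involve modes mapping $M_0$ into $M_{>0}$ or vanishing on it outright.

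Part (2) is the substantive direction. Given an $A(V)$-module $U$, the plan is to construct a generalized-Verma-type $\mathbb{Z}_+$-graded weak $V$-module $M$ with $M_0 \cong U$. Form the Lie algebra $\mathcal{L}(V) = V \otimes \mathbb{C}[t,t^{-1}]/(\text{translation relations})$ with bracket given by the standard mode-commutator formula, identify its non-negative mode subalgebra $\mathcal{L}(V)^{\geq 0}$ whose degree-preserving part acts on $U$ through $A(V)$, extend $U$ to an $\mathcal{L}(V)^{\geq 0}$-module by letting strictly positive modes act trivially, and induce to obtain $\overline{M} = \mathcal{U}(\mathcal{L}(V)) \otimes_{\mathcal{U}(\mathcal{L}(V)^{\geq 0})} U$, which is naturally $\mathbb{Z}_+$-graded. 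Then quotient $\overline{M}$ by the smallest graded submodule $\overline{J}$ needed to enforce the Jacobi identity for the resulting series $Y_M$, and set $M = \overline{M}/\overline{J}$. For part (3), given irreducible $U$, the unique maximal graded submodule of $M$ meeting $M_0$ trivially yields an irreducible quotient with top $U$; conversely, irreducibility of $M$ forces irreducibility of $M_0$ over $A(V)$, since any proper $A(V)$-submodule of $M_0$ would generate a proper graded submodule of $M$. One then checks these passages are mutually inverse on equivalence classes.

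The principal obstacle is showing $\overline{J} \cap U = 0$ in part (2), so that $M_0$ really is $U$ and not a proper quotient of it. This requires arguing that the Jacobi-identity relations impose no new constraints on the top component beyond those already captured by $A(V)$, which is the technical heart of Zhu's theorem. The typical strategy is to exhibit a PBW-style spanning set for $M$ and to show that relations from higher modes filter down to degree zero only through the $A(V)$ quotient, leveraging the identities $o(O(V))|_{M_0} = 0$ and $o(a * b) = o(a) o(b)|_{M_0}$ from part (1) to certify that the degree-zero piece remains intact.
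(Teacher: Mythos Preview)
The paper does not prove this theorem; it is stated with a citation to \cite{Z} and no proof is given. Your proposal is a reasonable outline of the standard argument from Zhu's original paper (induce from $U$ over the nonnegative part of the mode Lie algebra, quotient to enforce the Jacobi identity, and argue that the top component survives), so there is nothing in the present paper to compare it against.
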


\medskip

\subsection{Affine Lie algebras}

Let $\frak g$ be a finite-dimensional simple Lie algebra over $\mathbb C$, with a triangular decomposition $\frak g = \frak n_- \oplus \frak h \oplus \frak n_+$.  Let $\Delta$ be the root system of $(\frak g, \frak h)$, $\Delta_+ \subset \Delta$ the set of positive roots, $\theta$ the highest root and $(\cdot , \cdot ): \frak g \times \frak g \rightarrow \mathbb C$ the Killing form, normalized by the condition $(\theta, \theta) =2$.  Denote by $\Pi = \{ \alpha_1, ... , \alpha_l \}$ the set of simple roots of $\frak g$, and by $\Pi^\vee = \{ h_1, ... , h_l \}$ the set of simple coroots of $\frak g$.  The affine Lie algebra $\hat{\frak g}$ associated to $\frak g$ is the vector space
\[ \hat{\frak g} = \frak g \otimes \mathbb C[t, t^{-1}] \oplus \mathbb C K \] equipped with the bracket operation\[ [a\otimes t^m, b\otimes t^n] = [a,b]\otimes t^{m+n} + m(a,b)\delta_{m+n,0}K, \hspace{1cm} a,b \in \mathfrak{g}, m,n \in \mathbb{Z}, \]  together with the condition that $K$ is a nonzero central element.

Let $h^\vee$ be the dual Coxeter number of $\hat{\frak g}$.  Let $\hat{\frak g} = \hat{\frak n}_- \oplus \hat{\frak h} \oplus \hat{\frak n}_+$ be the corresponding triangular decomposition of $\hat{\frak g}$.  Denote by $\widehat{\Delta}$ the set of roots of $\hat{\frak g}$, by $\widehat{\Delta}_+$ the set of positive roots of $\hat{\frak g}$, and by $\widehat{\Pi}$ the set of simple roots of $\hat{\frak g}$.  We also denote by $\widehat{\Delta}^{\mathrm {re}}$ the set of real roots of $\hat{\frak g}$ and let $\widehat{\Delta}^{\mathrm{re}}_+ = \widehat{\Delta}^{\mathrm{re}} \cap \widehat{\Delta}_+$.  The coroot corresponding to a real root $\alpha \in \widehat{\Delta}^{\mathrm{re}}$ will be denoted by $\alpha^\vee$.   Let $\widehat{Q}= \bigoplus_{\alpha \in \widehat{\Pi}} \mathbb Z \, \alpha$ be the root lattice, and let $\widehat{Q}_+= \bigoplus_{\alpha \in \widehat{\Pi}} \mathbb Z_+ \, \alpha \subset \widehat{Q}$. For any $\lambda \in \hat{\frak h}^*$, we set
\[ D(\lambda) = \left \{ \lambda - \alpha \ | \ \alpha \in \widehat{Q}_+ \right \}.  \]

We say that a $\hat{\frak g}$-module $M$ belongs to the {\it category $\mathcal O$} if the Cartan subalgebra $\hat{\frak h}$ acts semisimply on $M$ with finite-dimensional weight spaces and there exits a finite number of elements $\nu_1, ... , \nu_k \in \hat{\frak h}^*$ such that $\nu \in \bigcup_{i=1}^k D(\nu_i)$ for every weight $\nu$ of $M$. We denote by $M(\lambda)$ the Verma module for $\hat{\frak g}$ with highest weight $\lambda \in \hat{\frak h}^*$, and by $L(\lambda)$ the irreducible $\hat{\frak g}$-module with highest weight $\lambda$. 
Let $U$ be a $\frak g$-module, and let $k \in \mathbb C$. We set $\hat{\frak g}_+ = \frak g \otimes t \mathbb C[t]$ 
and $\hat{\frak g}_- = \frak g \otimes t^{-1} \mathbb C[t^{-1}]$. Let $\hat{\frak g}_+$
act trivially on $U$ and $K$ as scalar multiplication by $k$.  Considering $U$ as a $\frak g \oplus \mathbb C K \oplus \hat{\frak g}_+$-module, we have the induced $\hat{\frak g}$-module \[ N(k, U) = \mathcal{U}(\hat{\frak g}) \otimes_{\mathcal{U}(\frak g \oplus \mathbb C K \oplus \hat{\frak g}_+)} U. \]

For a fixed $\mu \in \frak h^*$, denote by $V(\mu)$ the irreducible highest weight $\frak g$-module with highest weight $\mu$. Denote by $P_+$ the set of dominant integral weights of $\frak g$, and by $\omega_1, ... , \omega_l \in P_+$ the fundamental weights of $\frak g$. We will write $N(k, \mu) = N(k, V(\mu))$. Denote by $J(k, \mu)$ the maximal proper submodule of $N(k, \mu)$ and $L(k, \mu)= N(k, \mu)/ J(k, \mu)$. We define $\Lambda_0 \in \hat{\frak h}^*$ by $\Lambda_0(K)=1$ and $\Lambda_0 (h)=0$ for any $h \in \frak h$. Then $N(k, \mu)$ is a highest-weight module with highest weight $k \Lambda_0+ \mu$, and a quotient of the Verma module $M(k \Lambda_0+\mu)$. We also obtain $L(k, \mu) \cong L(k \Lambda_0+ \mu)$.

\medskip

\subsection{Admissible weights}

 Let $\widehat{\Delta}^{\vee, \mathrm{re}}$ (respectively, $\widehat{\Delta}^{\vee, \mathrm{re}}_+$) be the set of real (respectively, positive real) coroots of $\hat{\frak g}$, and $\widehat{\Pi}^\vee$ the set of simple coroots.  For $\lambda \in \hat{\frak h}^*$,  we define
\[ \widehat{\Delta}^{\vee, \mathrm{re}}_\lambda = \{ \alpha^\vee \in \widehat{\Delta}^{\vee, \mathrm{re}} \, | \, \langle \lambda, \alpha^\vee \rangle \in \mathbb Z \}, \qquad \text{ and } \qquad 
  \widehat{\Delta}^{\vee, \mathrm{re}}_{\lambda , +} = \widehat{\Delta}^{\vee, \mathrm{re}}_\lambda \cap \widehat{\Delta}^{\vee, \mathrm{re}}_+,\] and we set \[
  \widehat{\Pi}^\vee_\lambda = \{ \alpha^\vee \in \widehat{\Delta}^{\vee, \mathrm{re}}_{\lambda , +}  \, | \, \alpha^\vee \text{ is not decomposable into a sum of elements from }  \widehat{\Delta}^{\vee, \mathrm{re}}_{\lambda , +} \}. \]

Let $\hat W$ denote the Weyl group of $\hat{\frak g}$.  For each $\alpha \in \widehat{\Delta}^{\mathrm{re}}$, we have a reflection $r_\alpha \in \hat W$. 
Define $\rho \in \hat{\frak h}^*$ in the usual way, and we recall the shifted action of an element $w \in \hat W$  on $\hat{\frak h}^*$, given by $w \cdot \lambda = w (\lambda + \rho) - \rho$.   
   
   A weight $\lambda \in \hat{\frak h}^*$ is called {\em admissible} if \[ \langle \lambda + \rho , \alpha^\vee \rangle \notin - \mathbb Z_{+} \quad \text{ for all } \alpha^\vee \in \widehat{\Delta}^{\vee, \mathrm{re}}_+ \qquad \text{ and } \qquad \mathbb Q \widehat{\Delta}^{\vee, \mathrm{re}}_\lambda = \mathbb Q \widehat{\Pi}^\vee. \]
The irreducible $\hat{\frak g}$-module $L(\lambda)$ is called {\em admissible} if the weight $\lambda \in \hat{\frak h}^*$ is admissible. 
Given a $\hat{\frak g}$-module $M$ from the category $\mathcal O$, we call a weight vector $v \in M$ a {\it singular vector} if $\hat{\frak n}_+ . v = 0$.

\begin{Prop} \label{prop-KW} \cite{KW1}
Let $\lambda$ be an admissible weight. Then
\[ L(\lambda) = M(\lambda) \big / \left ( \sum_{\alpha^\vee \in \widehat{\Pi}^\vee_\lambda} \mathcal{U}(\hat{\frak g}) v_{\alpha} \right ) , \] where $v_{\alpha} \in M(\lambda)$ is a singular vector of weight $r_{\alpha} \cdot \lambda$. 
\end{Prop}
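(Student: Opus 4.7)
The plan is to prove two things: (i) for each $\alpha^\vee \in \widehat{\Pi}^\vee_\lambda$ the Verma module $M(\lambda)$ actually contains a singular vector $v_\alpha$ of weight $r_\alpha \cdot \lambda$, and (ii) the submodule $N := \sum_{\alpha^\vee \in \widehat{\Pi}^\vee_\lambda} \mathcal{U}(\hat{\frak g}) v_\alpha$ is the \emph{maximal} proper submodule of $M(\lambda)$, so that $M(\lambda)/N \cong L(\lambda)$.

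For (i), I would unpack admissibility. Since $\alpha^\vee \in \widehat{\Pi}^\vee_\lambda \subset \widehat{\Delta}^{\vee,\mathrm{re}}_\lambda$, the number $\langle \lambda+\rho, \alpha^\vee \rangle$ is automatically an integer; combined with the requirement $\langle \lambda+\rho, \alpha^\vee \rangle \notin -\mathbb Z_+$ coming from admissibility, this integer must be strictly positive. The Kac--Kazhdan existence criterion for singular vectors in Verma modules over symmetrizable Kac--Moody algebras then supplies a nonzero singular vector $v_\alpha \in M(\lambda)$ of weight $r_\alpha \cdot \lambda = \lambda - \langle \lambda+\rho, \alpha^\vee \rangle\,\alpha$. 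Because $v_\alpha$ lies strictly below the highest-weight line, $N$ is a proper submodule, so there is at least a surjection $M(\lambda)/N \twoheadrightarrow L(\lambda)$.

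For (ii), let $W_\lambda \subset \hat W$ denote the subgroup generated by $\{r_\alpha : \alpha^\vee \in \widehat{\Pi}^\vee_\lambda\}$. By the construction of $\widehat{\Pi}^\vee_\lambda$ as the indecomposable elements of $\widehat{\Delta}^{\vee,\mathrm{re}}_{\lambda,+}$, the pair $(W_\lambda, \widehat{\Pi}^\vee_\lambda)$ is a Coxeter system with positive coroot system $\widehat{\Delta}^{\vee,\mathrm{re}}_{\lambda,+}$, and admissibility makes $\lambda+\rho$ strictly $W_\lambda$-dominant. A BGG-type resolution argument carried out inside this integral subsystem would yield the character identity
\[ \mathrm{ch}\bigl(M(\lambda)/N\bigr) = \sum_{w \in W_\lambda} (-1)^{\ell(w)}\,\mathrm{ch}\,M(w \cdot \lambda), \]
whose right-hand side is exactly the Kac--Wakimoto character formula for $L(\lambda)$. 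Combined with the surjection $M(\lambda)/N \twoheadrightarrow L(\lambda)$ produced in (i), equality of characters forces $M(\lambda)/N = L(\lambda)$.

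The hard part is the character identity in (ii), equivalently the assertion that every singular vector in $M(\lambda)/N$ not proportional to the image of the highest weight vector vanishes. The second admissibility condition $\mathbb Q \widehat{\Delta}^{\vee,\mathrm{re}}_\lambda = \mathbb Q \widehat{\Pi}^\vee$ is essential here: together with the Kac--Kazhdan description of those weights $\mu \in D(\lambda)$ at which $M(\lambda)$ can carry a singular vector, it forces every such weight to lie in the $W_\lambda$-orbit of $\lambda$ under the shifted action, so that all singular vectors are already accounted for by descendants of the $v_\alpha$. Making this precise is the content of Kac--Wakimoto's analysis in \cite{KW1}, and a self-contained proof here would amount to reproducing that analysis.
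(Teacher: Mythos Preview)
The paper does not give its own proof of this proposition: it is stated with the citation \cite{KW1} and used as a black box. There is therefore nothing in the paper to compare your argument against.

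Your outline is a reasonable sketch of the Kac--Wakimoto argument. Part (i) is correct as written. For part (ii), the character identity you state is exactly the Kac--Wakimoto formula for admissible modules, and you correctly identify that the real work---controlling all singular vectors of $M(\lambda)$ via the integral Weyl group $W_\lambda$, using the Kac--Kazhdan determinant together with the rank condition $\mathbb{Q}\widehat{\Delta}^{\vee,\mathrm{re}}_\lambda = \mathbb{Q}\widehat{\Pi}^\vee$---is carried out in \cite{KW1}. As you yourself note, making this step precise is essentially reproducing that paper, so your proposal is an accurate roadmap rather than a self-contained proof; for the purposes of this paper that is exactly what is appropriate, since the result is only being quoted.
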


\begin{Prop} \label{prop-KW2}\cite{KW2} Let $M$ be a $\hat{\frak g}$-module from the category $\mathcal O$. If every irreducible subquotient $L(\nu)$ of $M$ is admissible, then $M$ is completely reducible.
\end{Prop}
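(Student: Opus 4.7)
The plan is to establish the vanishing
\[ \mathrm{Ext}^1_{\mathcal{O}}(L(\nu), L(\mu)) = 0 \]
for any pair of admissible weights $\nu, \mu$. Once this is in hand, a routine induction on composition length (reducing via the block decomposition of $\mathcal O$ by central characters to the case of finitely many composition factors) yields complete reducibility of $M$.

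To prove the Ext vanishing, fix an extension $0 \to L(\mu) \to E \to L(\nu) \to 0$ and assume without loss of generality that $L(\mu)$ and $L(\nu)$ share the same central character. I would first handle the case $\mu \not> \nu$ in the standard partial order on $\hat{\frak h}^*$: lift the highest weight vector of $L(\nu)$ to a vector $\tilde v \in E$ of weight $\nu$, and observe that $\hat{\frak n}_+ \tilde v$ lies in $L(\mu)$ and has weights strictly greater than $\nu$; since all weights of $L(\mu)$ are at most $\mu \not> \nu$, this forces $\hat{\frak n}_+ \tilde v = 0$. Thus $\tilde v$ is singular in $E$ and generates a submodule $V = \mathcal U(\hat{\frak g}) \tilde v$ which is a homomorphic image of $M(\nu)$. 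By Proposition \ref{prop-KW}, $L(\nu)$ is obtained from $M(\nu)$ by quotienting out the submodule generated by singular vectors $v_\alpha$ of weights $r_\alpha \cdot \nu$, for $\alpha^\vee \in \widehat{\Pi}^\vee_\nu$. The computation
\[ \langle r_\alpha \cdot \nu + \rho, \alpha^\vee \rangle = -\langle \nu + \rho, \alpha^\vee \rangle \in -\mathbb Z_+ \]
shows that each such weight $r_\alpha \cdot \nu$ violates the admissibility condition, so $L(r_\alpha \cdot \nu)$ is not admissible. But every composition factor of $V \subseteq E$ must be admissible by hypothesis, so each $v_\alpha$ must vanish in $V$; hence $V \cong L(\nu)$, and since $\tilde v \notin L(\mu)$ one obtains $V \cap L(\mu) = 0$ and $E = L(\mu) \oplus V$.

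The remaining case $\mu > \nu$ reduces to the first via the standard duality functor on category $\mathcal O$ (induced by the Chevalley involution), which is contravariant, preserves irreducible modules, and yields a natural isomorphism $\mathrm{Ext}^1_{\mathcal O}(L(\nu), L(\mu)) \cong \mathrm{Ext}^1_{\mathcal O}(L(\mu), L(\nu))$; in the dualized extension the roles of $\mu$ and $\nu$ are exchanged, placing us back in the easy case. The main technical step will be the non-admissibility of the weights $r_\alpha \cdot \nu$, which is immediate from the definitions once one notes that $\langle \nu + \rho, \alpha^\vee \rangle$ is a strictly positive integer for $\alpha^\vee \in \widehat{\Pi}^\vee_\nu$; the rest of the argument is structural, resting on Proposition \ref{prop-KW} and the standard formalism of category $\mathcal O$.
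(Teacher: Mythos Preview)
The paper gives no proof of this proposition; it merely quotes the result from \cite{KW2}. So there is nothing to compare against in the paper itself, and your proposal should be judged on its own merits.

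Your argument is essentially the Kac--Wakimoto argument and is correct in its main points. The key observation---that for $\alpha^\vee \in \widehat{\Pi}^\vee_\nu$ the weight $r_\alpha \cdot \nu$ fails the admissibility inequality at $\alpha^\vee$ itself---is exactly right: since $\alpha^\vee \in \widehat{\Delta}^{\vee,\mathrm{re}}_\nu$ one has $\langle \nu, \alpha^\vee\rangle \in \mathbb Z$, and $\langle \rho, \alpha^\vee\rangle \in \mathbb Z$ because real coroots lie in the $\mathbb Z$-span of the simple coroots; admissibility then forces $\langle \nu+\rho,\alpha^\vee\rangle \in \mathbb Z_{>0}$, and the reflected weight satisfies $\langle r_\alpha\cdot\nu + \rho, \alpha^\vee\rangle = -\langle \nu+\rho,\alpha^\vee\rangle \in -\mathbb Z_{>0}$. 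The lift-and-split argument via Proposition~\ref{prop-KW} and the duality reduction for the case $\mu > \nu$ are both standard and sound.

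One point to tighten: the phrase ``routine induction on composition length (reducing via the block decomposition \dots\ to the case of finitely many composition factors)'' is not quite accurate, since modules in category~$\mathcal O$ for an affine algebra need not have finite length even within a single block. The passage from $\mathrm{Ext}^1$-vanishing between admissible simples to complete reducibility of $M$ still goes through, but you should phrase it via the socle: the socle $\mathrm{soc}(M)$ is semisimple, and if $M/\mathrm{soc}(M)\neq 0$ it contains a simple submodule $L(\nu)$ whose preimage in $M$ is an extension of $L(\nu)$ by a (possibly infinite) direct sum of admissible simples; since $L(\nu)$ is finitely generated, $\mathrm{Ext}^1_{\mathcal O}(L(\nu),-)$ commutes with the relevant direct sum and the extension splits, contradicting the definition of the socle.
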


\medskip

\subsection{$N(k,0)$ and $L(k,0)$ as VOA's}
We identify the one-dimensional trivial $\frak g$-module $V(0)$ with $\mathbb C$. Write $\mathbf{1}=1 \otimes 1 \in N(k,0)$. The $\hat{\frak g}$-module $N(k,0)$ is spanned by the elements of the form \[ a_1(-n_1-1) \cdots a_m(-n_m-1) \mathbf{1},\] where $a_1, ... , a_m \in \frak g$ and $n_1, ... , n_m \in \mathbb Z_{+}$, with $a(n)$ denoting the element $a \otimes t^n$ for $a \in \frak g$ and $n \in \mathbb Z$.  

The vector space $N(k,0)$ admits a VOA structure, which we now describe.
The vertex operator map $Y(\cdot , x) : N(k,0) \rightarrow {\rm{End}} (N(k,0)) [[x, x^{-1}]]$ is uniquely determined by defining $Y(\mathbf{1}, x)$ to be the identity operator on $N(k,0)$ and \[ Y( a(-1) \mathbf{1}, x) = \sum_{n \in \mathbb Z} a(n) x^{-n-1} \quad \text{ for } a \in \frak g . \]   In the case that $k \neq - h^\vee$, the module $N(k,0)$ has a conformal vector \[ \omega = \frac 1 {2(k+h^\vee)} \sum_{i=1}^{\dim \frak g} (a^i (-1))^2 \mathbf{1},\] where $\{a^i\}_{i=1, ... , \dim \frak g}$ is an arbitrary orthonormal basis of $\frak g$ with respect to the normalized Killing form $(\cdot , \cdot )$.  Then it is well known that the quadruple $( N(k,0), Y, \mathbf{1}, \omega)$ defined above is a vertex operator algebra.

\begin{Prop} \cite{FZ}   The associative algebra $A(N(k,0))$ is canonically isomorphic to $\mathcal{U}(\frak g)$. The isomorphism is given by $F: A(N(k,0)) \rightarrow \mathcal{U}(\frak g)$, \[ F( [a_1(-n_1-1) \cdots a_m(-n_m-1) \mathbf{1}]) = (-1)^{n_1+ \cdots +n_m} a_1 \cdots a_m,\] for  $a_1, ... , a_m \in \frak g$ and  $n_1, ... , n_m \in \mathbb Z_{+}$.
\end{Prop}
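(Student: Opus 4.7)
The plan is to verify that $F$ is well-defined on $A(N(k,0))$, that it is a surjective algebra homomorphism, and then to construct an inverse establishing injectivity.

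For well-definedness, the key input is the standard Zhu-type identity: for every homogeneous $a \in N(k,0)$ and integer $n \geq 0$,
\[
\mathrm{Res}_x \frac{(1+x)^{\deg a + n}}{x^{2+n}} Y(a,x) b \; \in \; O(N(k,0)).
\]
Applied to $a = u(-1)\mathbf{1}$ with $u \in \frak g$ (so $\deg a = 1$) and varying $n$, this yields a family of relations from which one deduces inductively that $[u(-n-1) b] \equiv (-1)^n [u(-1) b]$ modulo $O(N(k,0))$. Together with the affine commutation relations used to reorder modes (the central term drops out whenever the sum of mode indices is nonzero, which can always be arranged after such reductions), this shows that every class in $A(N(k,0))$ is represented by an element of the form $[u_1(-1) \cdots u_m(-1) \mathbf{1}]$ with $u_i \in \frak g$, and confirms that the sign $(-1)^{n_1+\cdots+n_m}$ in the definition of $F$ is exactly what is needed for compatibility with $O(N(k,0))$.

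For multiplicativity and surjectivity, a direct computation of $u(-1)\mathbf{1} * b = u(-1)b + u(0)b$ for $u \in \frak g$ and arbitrary $b$, combined with the reductions above, shows that under $F$ the $*$-product of classes of pure mode $-1$ corresponds to the associative product in $\mathcal{U}(\frak g)$; in particular the difference $[u(-1)\mathbf{1}] * [v(-1)\mathbf{1}] - [v(-1)\mathbf{1}] * [u(-1)\mathbf{1}]$ reduces modulo $O(N(k,0))$ to $[[u,v](-1)\mathbf{1}]$, which matches the defining commutator of $\mathcal{U}(\frak g)$ under $F$. Surjectivity is then immediate from $F([u(-1)\mathbf{1}]) = u$ for all $u \in \frak g$, since $\frak g$ generates $\mathcal{U}(\frak g)$.

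Injectivity is the main obstacle, since a priori $A(N(k,0))$ could carry extra relations beyond those of $\mathcal{U}(\frak g)$. The cleanest way around this is to build a two-sided inverse: define $G : \mathcal{U}(\frak g) \to A(N(k,0))$ on generators by $u \mapsto [u(-1)\mathbf{1}]$ and extend multiplicatively. The commutator computation above shows that $G$ respects the defining relation $uv - vu = [u,v]$ of $\mathcal{U}(\frak g)$, hence is a well-defined algebra homomorphism. The spanning result from the first step shows $G$ is surjective, and $F \circ G = \mathrm{id}_{\mathcal{U}(\frak g)}$ by inspection on generators; hence $G$ is injective and $F$ is its inverse. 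The bulk of the work lies in the first step: the careful bookkeeping of $O(N(k,0))$-relations needed to reduce all modes to mode $-1$ while tracking the accumulated signs and the commutators that arise from the affine bracket.
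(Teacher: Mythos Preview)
The paper does not give its own proof of this proposition; it is quoted from \cite{FZ} as a known result, so there is no in-paper argument to compare your attempt against. Your outline---mode reduction to degree $-1$, construction of the inverse $G:\mathcal{U}(\frak g)\to A(N(k,0))$ via the universal property, and the commutator check $[u(-1)\mathbf 1]*[v(-1)\mathbf 1]-[v(-1)\mathbf 1]*[u(-1)\mathbf 1]=[[u,v](-1)\mathbf 1]$---is the standard one and is essentially what Frenkel and Zhu do.

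One point deserves more care than you give it. You invoke $F\circ G=\mathrm{id}$ to conclude that $G$ is injective, but this presupposes that $F$ is already a well-defined linear map on $A(N(k,0))$. Your first paragraph argues only that every class has a representative in pure mode $-1$; it does not show that two different representatives of the same class are sent to the same element of $\mathcal{U}(\frak g)$. In the Frenkel--Zhu argument this is handled by first defining a linear map $\tilde F$ on all of $N(k,0)$ (using the identification $N(k,0)\cong\mathcal{U}(\hat{\frak g}_-)$ and the Lie algebra map $\hat{\frak g}_-\to\frak g$, $a(-n-1)\mapsto(-1)^{n}a$) and then checking directly that $\tilde F$ annihilates $O(N(k,0))$; only then does one obtain an honest map on the quotient. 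Your sketch would be complete once you insert this step, or alternatively once you bypass $F$ entirely and prove injectivity of $G$ by letting $A(N(k,0))$ act on the top component of suitable $N(k,0)$-modules.
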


Since every $\hat{\frak g}$-submodule of $N(k,0)$ is also an ideal in the VOA $N(k,0)$, the module $L(k,0)$ is a VOA for any $k \neq -h^\vee.$

\begin{Prop} \label{prop-important} \cite{P}
Assume that the maximal $\hat{\frak g}$-submodule of $N(k,0)$ is generated by a singular vector $v_0$. Then we have
\[ A(L(k,0)) \cong \mathcal{U}(\frak g) \big / \langle F([v_0]) \rangle , \] where $\langle F([v_0]) \rangle$ is the two-sided ideal of $\mathcal{U}(\frak g)$ generated by $F([v_0])$. In particular, a $\frak g$-module $U$ is an $A(L(k,0))$-module if and only if $F([v_0]) U=0$.
\end{Prop}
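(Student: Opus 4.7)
The plan is to combine Proposition \ref{prop-FZ} with the explicit isomorphism $F: A(N(k,0)) \xrightarrow{\sim} \mathcal{U}(\frak g)$ of the preceding proposition. Let $J$ denote the maximal $\hat{\frak g}$-submodule of $N(k,0)$, so that $L(k,0) = N(k,0)/J$. Since every $\hat{\frak g}$-submodule of $N(k,0)$ is automatically an ideal of the VOA (as recalled above the statement), Proposition \ref{prop-FZ} yields
\[
A(L(k,0)) \cong A(N(k,0))/A(J) \cong \mathcal{U}(\frak g)/F(A(J)),
\]
so the problem reduces to showing $F(A(J)) = \langle F([v_0])\rangle$.

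One inclusion is automatic: $A(J)$ is a two-sided ideal of $A(N(k,0))$ (by Proposition \ref{prop-FZ}) which contains $[v_0]$, so it contains $A(N(k,0)) * [v_0] * A(N(k,0))$, and applying $F$ gives $F(A(J)) \supseteq \langle F([v_0])\rangle$. For the reverse inclusion I would prove that every class $[u]$ with $u \in J$ lies in $A(N(k,0)) * [v_0] * A(N(k,0))$. Since $v_0$ is singular, $\hat{\frak n}_+$ annihilates it while $\hat{\frak h}$ and $K$ act on it by scalars; by PBW it suffices to treat vectors of the form
\[
u = a_1(-n_1-1)\cdots a_m(-n_m-1)\,v_0, \qquad a_i \in \frak g,\ n_i \geq 0.
\]
I would then induct on $m$, using the standard Zhu-algebra identities which express $[a(-n-1) w] \pmod{O(V)}$, for $a \in \frak g$ of conformal weight one, as an $A(V)$-linear combination of $[a(-1)\mathbf{1}]*[w]$ and $[w]*[a(-1)\mathbf{1}]$. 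Each step absorbs one outer mode into a $*$-factor, eventually exhibiting $[u]$ with $[v_0]$ sandwiched between elements of $A(N(k,0))$.

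The ``in particular'' assertion is then immediate: a $\frak g$-module $U$ is the same data as a $\mathcal{U}(\frak g)$-module, and it descends to $\mathcal{U}(\frak g)/\langle F([v_0])\rangle$ precisely when the generator $F([v_0])$ annihilates $U$. The main obstacle is the inductive mode-reduction in $A(V)$: the underlying Zhu-algebra identities are standard (compare \cite{Z}), but one must track coefficients carefully to ensure that every reduction of $[a(-n-1) w]$ remains cleanly inside the two-sided ideal generated by $[v_0]$ without producing extraneous $O(V)$-contributions.
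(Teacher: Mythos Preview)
The paper does not give its own proof of this proposition; it is quoted from \cite{P} and stated without argument, so there is nothing to compare your method against directly. Your outline is the standard one and is essentially how the result is proved in the literature: combine $A(V/I)\cong A(V)/A(I)$ with $A(N(k,0))\cong\mathcal U(\frak g)$, and then identify the image of the ideal.

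One small gap to watch: when you invoke PBW to say that $J$ is spanned by vectors $a_1(-n_1-1)\cdots a_m(-n_m-1)v_0$ with $a_i\in\frak g$ and $n_i\ge 0$, you are implicitly using only $\hat{\frak g}_-=\frak g\otimes t^{-1}\mathbb C[t^{-1}]$. But $J=\mathcal U(\hat{\frak g})v_0=\mathcal U(\hat{\frak n}_-)v_0$, and $\hat{\frak n}_-$ also contains $\frak n_-\subset\frak g\otimes t^0$; so your spanning set must allow factors $f(0)$ with $f\in\frak n_-$ as well. This is harmless for your induction, since for $a\in\frak g$ one has $[a(0)w]=[a(-1)\mathbf 1]*[w]-[w]*[a(-1)\mathbf 1]$ in $A(V)$, which again lands in the two-sided ideal generated by $[w]$. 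With that extra case added, your mode-reduction argument goes through cleanly: $[a(-n-1)w]\equiv(-1)^n[a(-1)w]\equiv(-1)^n[w]*[a(-1)\mathbf 1]$ in $A(V)$ for weight-one $a$, and together with the commutator identity this shows every $[u]$ with $u\in J$ lies in the two-sided ideal generated by $[v_0]$.
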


\vskip 1 cm

\section{Affine Lie algebra of type $G_2^{(1)}$}

\subsection{Admissible weights}
Let   \[   \Delta =  \left \{ \begin{array}{lll} \pm \frac{1}{\sqrt{3}}(\epsilon_1-\epsilon_2),& \pm \frac{1}{\sqrt{3}}(\epsilon_1-\epsilon_3),& \pm \frac{1}{\sqrt{3}}(\epsilon_2-\epsilon_3), \\ \pm \frac{1}{\sqrt{3}}(2\epsilon_1-\epsilon_2-\epsilon_3),&  \pm \frac{1}{\sqrt{3}}(2\epsilon_2-\epsilon_1-\epsilon_3),& \pm \frac{1}{\sqrt{3}}(2\epsilon_3-\epsilon_1-\epsilon_2) \end{array} \right \}    \] be the root system of type $G_2$.  We fix the set of positive roots \[    \Delta_+ =  \left \{ \begin{array}{lll} \frac{1}{\sqrt{3} }(\epsilon_1-\epsilon_2) ,& \frac{1}{\sqrt{3} }(\epsilon_3-\epsilon_1), & \frac{1}{\sqrt{3} }(\epsilon_3-\epsilon_2), \\ \frac{1}{\sqrt{3} }(-2\epsilon_1+\epsilon_2+\epsilon_3) , & \frac{1}{\sqrt{3} }(-2\epsilon_2+\epsilon_1+\epsilon_3), & \frac{1}{\sqrt{3} }(2\epsilon_3 - \epsilon_1-\epsilon_2) \end{array} \right \}.   \]  Then the simple roots are $\alpha= \frac 1 {\sqrt{3}} ( \epsilon_1-\epsilon_2)$ and $\beta= \frac 1 {\sqrt{3}} (-2\epsilon_1+\epsilon_2+\epsilon_3)$, and the highest root is $\theta = \frac 1 {\sqrt{3}} (2\epsilon_3-\epsilon_1-\epsilon_2) = 3\alpha + 2\beta$.
Let $\mathfrak{g}$ be the simple Lie algebra over $\mathbb{C}$, associated with the root system of type $G_2$.  Let $E_{10},E_{01},F_{10},F_{01},H_{10},H_{01}$ be Chevalley generators of $\mathfrak{g}$, where $E_{10}$ is a root vector for $\alpha$, $E_{01}$ is a root vector for $\beta$, and so on.  We fix the root vectors:

\begin{equation} \label{xx}
\begin{split}
E_{11}& = [E_{10}, E_{01}],\\
E_{21}& = \frac{1}{2} [E_{11},E_{10}] = \frac{1}{2} [[E_{10}, E_{01}], E_{10}],\\
E_{31}& = \frac{1}{3} [ E_{21}, E_{10}] = \frac{1}{6} [[[E_{10}, E_{01}], E_{10}], E_{10}]\\
E_{32}& = [ E_{31}, E_{01}]=\frac{1}{6} [[[[E_{10}, E_{01}], E_{10}], E_{10}],E_{01}],\\
F_{11}& = [F_{01}, F_{10}],\\
F_{21}& = \frac{1}{2} [F_{10} , F_{11}]=\frac{1}{2} [F_{10} , [F_{01}, F_{10}]],\\
F_{31}& =\frac{1}{3} [F_{10} ,F_{21}]=\frac{1}{6} [F_{10} ,[F_{10} , [F_{01}, F_{10}]]],\\
F_{32}& =  [F_{01}, F_{31}]=\frac{1}{6} [F_{01}, [F_{10} ,[F_{10} , [F_{01}, F_{10}]]]].
\end{split}
\end{equation}\\
We set $H_{ij}  = [E_{ij}, F_{ij}]$  for any positive root $i \alpha+ j \beta \in \Delta_+$. Then one can check that $H_{ij} $ is the coroot corresponding to $i \alpha+ j \beta$, i.e. $H_{ij}= (i \alpha+ j \beta)^\vee$. For a complete multiplication table, we refer the reader to Table 22.1 in \cite[p.346]{FH}, where we have
\[ \begin{array}{llllll}
X_1= E_{10}, & X_2= E_{01}, & X_3= E_{11}, & X_4= - E_{21}, & X_5= - E_{31}, & X_6= - E_{32}, \\
Y_1= F_{10},  & Y_2= F_{01},  & Y_3= F_{11},  & Y_4= - F_{21},  & Y_5=- F_{31},  & Y_6= - F_{32}.
\end{array} \]

All admissible weights for arbitrary affine Lie algebras have been completely classified in \cite{KW2}.  The next proposition provides a description of the ``vacuum" admissible weights for $G_2^{(1)}$ at one-third levels.  This is a special case of Proposition 1.2 in \cite{KW3}.  We provide a proof for completeness.

\begin{Lem} \label{lem-adm}
The weight $\lambda_{3n+i} = (n-2+\frac{i}{3})\Lambda_0$ is admissible for $n \in \mathbb{Z}_{+}, i = 1,2,$ and we have
\[ \widehat{ \Pi }_{\lambda_{3n+i}}^{\vee} = \{ (\delta- (2 \alpha+\beta) )^\vee, \alpha^\vee, \beta^\vee \},   \] where $\delta$ is the canonical imaginary root.
Furthermore,
\begin{eqnarray}
   & & \langle \lambda_{3n+i}+\rho, \gamma^\vee \rangle = 1 \quad \textit{ for } \gamma=\alpha,\beta ; \nonumber  \\
   & & \langle \lambda_{3n+i}+\rho, (\delta-(2 \alpha+\beta))^\vee \rangle = 3n+i+1 \quad  \textit{ for } i=1,2.  \nonumber
\end{eqnarray}
\end{Lem}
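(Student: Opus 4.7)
First I fix the $G_2$ data. From the Cartan matrix with $\alpha$ short ($(\alpha,\alpha)=\tfrac23$) and $\beta$ long ($(\beta,\beta)=2$), the fundamental weights are $\omega_1 = 2\alpha+\beta$ and $\omega_2 = 3\alpha+2\beta$, so $\rho_{\mathrm{fin}} = 5\alpha+3\beta$. Combined with $h^\vee = 4$ this gives $\rho = 4\Lambda_0 + 5\alpha+3\beta$. For a real affine coroot $(\gamma+m\delta)^\vee$ with $\gamma\in\Delta$, $m\in\mathbb Z$, and a weight $\mu + c\Lambda_0$ with $\mu \in \frak h^*$, the bilinear form computation yields
\[
\la \mu+c\Lambda_0,\,(\gamma+m\delta)^\vee\ra \;=\; \la \mu, \gamma^\vee\ra + \tfrac{2m}{(\gamma,\gamma)}\,c ,
\]
where $\tfrac{2m}{(\gamma,\gamma)}$ equals $3m$ for $\gamma$ short and $m$ for $\gamma$ long. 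The three pairings claimed in the lemma then follow immediately: for $\gamma=\alpha,\beta$ (simple, $m=0$) both reduce to $\la\rho,\alpha^\vee\ra = \la \rho, \beta^\vee\ra = 1$; for $\gamma=-(2\alpha+\beta)$ (short, $m=1$), with $\la\rho_{\mathrm{fin}},(2\alpha+\beta)^\vee\ra=5$ and level $c = n+2+\tfrac{i}{3}$, the value is $-5 + 3(n+2+\tfrac{i}{3}) = 3n+i+1$.

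Next I would verify $\la\lambda_{3n+i}+\rho,\eta^\vee\ra\notin -\mathbb Z_+$ for every positive real coroot $\eta^\vee$. Parametrize $\eta = \gamma+m\delta$ with $\gamma\in\Delta_+$, $m\geq 0$, or $\gamma = -\sigma$ for $\sigma\in\Delta_+$ and $m\geq 1$, and set $N := 3n+i+6$, so $N\geq 7$ and $N\not\equiv 0\pmod 3$. For $\gamma\in\Delta_+$ the pairing is a sum of strictly positive terms and is $\geq 1$. For $\gamma=-\sigma$ short it equals $mN - c$ with $c \in \{1,4,5\}$, hence $\geq N-5\geq 2$. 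For $\gamma=-\sigma$ long it equals $mN/3 - c$ with $c\in\{1,2,3\}$; this is non-integer whenever $3\nmid m$ (so automatically not in $-\mathbb Z_+$) and $\geq N-3\geq 4$ when $3\mid m$ (forcing $m\geq 3$).

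For $\widehat\Pi^\vee_{\lambda_{3n+i}}$ and the span condition I would first describe $\widehat\Delta^{\vee,\mathrm{re}}_{\lambda_{3n+i}}$. The pairing $\la\lambda_{3n+i},(\gamma+m\delta)^\vee\ra$ is $m(3n-6+i)$ for short $\gamma$ and $m(3n-6+i)/3$ for long $\gamma$; since $3n-6+i\equiv i\not\equiv 0 \pmod 3$, integrality is automatic for every short real coroot and holds for long real coroots precisely when $3\mid m$. Using $\delta = \alpha_0^\vee + \theta^\vee = \alpha_0^\vee + \alpha^\vee + 2\beta^\vee$ and $(2\alpha+\beta)^\vee = 2\alpha^\vee + 3\beta^\vee$, one gets
\[
  (\delta-(2\alpha+\beta))^\vee \;=\; 3\alpha_0^\vee + \alpha^\vee + 3\beta^\vee ,
\]
so the three proposed elements rationally generate $\{\alpha_0^\vee,\alpha^\vee,\beta^\vee\}$, yielding $\mathbb Q \widehat\Delta^{\vee,\mathrm{re}}_{\lambda_{3n+i}} = \mathbb Q\widehat\Pi^\vee$. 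Indecomposability of $\alpha^\vee, \beta^\vee$ is standard. For $(\delta-(2\alpha+\beta))^\vee$, the key observation is that every positive real coroot in the subsystem has its $\alpha_0^\vee$-coefficient in $\{0,3,6,\dots\}$ (short coroots contribute $3m$; admissible long coroots require $3\mid m$). Hence any decomposition $(\delta-(2\alpha+\beta))^\vee = \eta_1+\eta_2$ inside the subsystem forces one summand to have $\alpha_0^\vee$-coefficient $0$---hence to be one of the six positive coroots of $\frak g$---and the other to have coefficient exactly $3$; running through the six candidates, one checks directly that the required complement is never a real coroot. That these three exhaust $\widehat\Pi^\vee_{\lambda_{3n+i}}$ is completed by noting each remaining positive coroot in the subsystem decomposes as a sum of two such, a routine bookkeeping check.

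The main obstacle is this last case-analysis: the congruence restriction $3\mid m$ for long coroots in the subsystem is exactly what blocks the otherwise tempting decompositions of $(\delta-(2\alpha+\beta))^\vee$, and confirming indecomposability reduces to a direct check over the six finite positive coroots of $\frak g$. All other steps are routine bilinear form manipulations in the $G_2$ root system.
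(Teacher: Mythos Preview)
Your proof is correct and follows the same case-analysis strategy as the paper's, with the added benefit that you make the indecomposability of $(\delta-(2\alpha+\beta))^\vee$ explicit via the $\alpha_0^\vee$-coefficient argument, whereas the paper simply asserts $\widehat{\Pi}^\vee_{\lambda}$ after describing $\widehat{\Delta}^{\vee,\mathrm{re}}_{\lambda,+}$. One notational slip to clean up: you write ``$\delta = \alpha_0^\vee + \theta^\vee$'', but $\delta$ is a root, not a coroot---what you mean is the central element $K = \alpha_0^\vee + \theta^\vee = \alpha_0^\vee + \alpha^\vee + 2\beta^\vee$, after which your formula $(\delta-(2\alpha+\beta))^\vee = -(2\alpha+\beta)^\vee + 3K = 3\alpha_0^\vee + \alpha^\vee + 3\beta^\vee$ is indeed correct.
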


\begin{proof}
We have to show
\begin{align}
&\langle  \lambda_{3n+i} + \rho, {\gamma}^\vee \rangle \notin -\mathbb{Z}_+  \qquad \text{    for any }\gamma \in \widehat{\Delta}_+^{\mathrm{re}} \nonumber \\
\text{ and } \qquad \qquad  & \mathbb{Q} \widehat{\Delta}_{\lambda_{3n+i}}^{\vee, \mathrm{re}}  = \mathbb{Q} \widehat{ \Pi }^{\vee}.     \nonumber
\end{align}

Any positive real root ${\gamma} \in \widehat{ \Delta}_+^{\mathrm{re}} $ of $\hat{\mathfrak{g} }$ is of the form ${\gamma} = \bar{\gamma} + m \delta$, for $m > 0$ and $\bar{\gamma} \in \Delta$, or $m=0$ and $\bar{\gamma} \in \Delta_+$.  Denote by $\bar{\rho}$ the sum of fundamental weights of $\mathfrak{g}$.  Then we can choose $\rho = h^\vee \Lambda_0 + \bar{\rho} = 4 \Lambda_0 + \bar{\rho}$.

We have
\begin{eqnarray*}
 \langle \lambda_{3n+i} + \rho, {\gamma}^\vee \rangle   & = & \bigl{\langle} \bigl{(}n+2+\tfrac{i}{3}\bigr{)} \Lambda_0 + \bar{\rho}, (\bar{\gamma}+m\delta)^\vee \bigr{\rangle} \\
 & = & \tfrac{2}{(\bar{\gamma},\bar{\gamma})} \bigl{(}m\bigl{(}n+2+\tfrac{i}{3} \bigr{)} + (\bar{\rho}, \bar{\gamma}) \bigr{)}.
\end{eqnarray*}
If $m=0$, then it is trivial that $\langle \lambda_{3n+i}, {\gamma}^\vee \rangle \notin - \mathbb{Z}_+$.
Suppose that $m \ge 1$.  If $(\bar{\gamma}, \bar{\gamma}) = 2$ and $m$ $\not \equiv 0\ (\mathrm{mod}\ 3)$, then $\langle \lambda_{3n+i} + \rho, {\gamma}^\vee \rangle \notin - \mathbb{Z}_+$.
If $(\bar{\gamma}, \bar{\gamma}) = 2$, and $m$ $ \equiv 0\ (\mathrm{mod}\ 3)$, then $m\ge3$, and since $(\bar{\rho}, \bar{\gamma}) \ge -3$ for any $\bar{\gamma} \in \Delta$, we have
\[  \langle \lambda_{3n+i} + \rho, {\gamma} ^\vee \rangle = m \bigl{(} n+2+\tfrac{i}{3} \bigr{)} + (\bar{\rho}, \bar{\gamma}) \ge 3 \bigl{(}n +2+ \tfrac{1}{3}) -3= 3n+4 \ge 4,\]
which implies $\langle \lambda_{3n+i} + \rho, {\gamma}^\vee \rangle  \notin - \mathbb{Z}_+$.
If $(\bar{\gamma}, \bar{\gamma}) = \frac{2}{3}$, then $(\bar{\rho}, \bar{\gamma}) \ge -\frac{5}{3}$.  We have
\[   \langle \lambda_{3n+i} + \rho, {\gamma} ^\vee \rangle = 3 \bigl{(} m \bigl{(} n+2+\tfrac{i}{3} \bigr{)} + (\bar{\rho}, \bar{\gamma})\bigr{)} \ge  3 \bigl{(}  n+\tfrac{7}{3}  + (\bar{\rho}, \bar{\gamma} )\bigr{)}  \ge 3 \bigl{(}n + \tfrac{7}{3} -\tfrac{5}{3} \bigr{)}= 3n+2 \ge 2,\]
which implies $\langle \lambda_{3n+i} + \rho, {\gamma}^\vee \rangle  \notin - \mathbb{Z}_+$.
Thus, $\langle \lambda_{3n+i} + \rho, {\gamma}^\vee \rangle \notin -\mathbb{Z}_+$ for any ${\gamma} \in \widehat{\Delta}_+^{\mathrm{re}}.$  

One can easily see that \begin{eqnarray*} \widehat{\Delta}^{\vee, \mathrm{re}}_{\lambda_{3n+i}, +} &=& 
\{ m\delta + \bar{\gamma} | m>0, m \equiv 0 \ (\mathrm{mod}\ 3), (\bar{\gamma}, \bar{\gamma})= 2 \} \\ &  & \cup \
\{ m\delta + \bar{\gamma} | m>0, (\bar{\gamma}, \bar{\gamma})=  2/ 3 \}  \cup \Delta_+ , \end{eqnarray*}
Then we obtain \[ \widehat{ \Pi }_{\lambda_{3n+i}}^{\vee} = \{ (\delta- (2 \alpha+ \beta) )^\vee, \alpha^\vee, \beta^\vee \},   \] 
and we see that $\mathbb{Q} \widehat{\Delta}_{\lambda_{3n+i}}^{\vee, \mathrm{re}}  =  \mathbb{Q} \widehat{ \Pi }_{\lambda_{3n+i}}^{\vee}  =  \mathbb{Q} \widehat{ \Pi }^{\vee}. $
Through direct calculations, we get
\[ \begin{aligned}
    & \langle \lambda_{3n+i}+\rho,\gamma^\vee \rangle = 1 \text{ for } \gamma=\alpha,\beta, \text { and } \\
    & \langle \lambda_{3n+i}+\rho, (\delta-(2 \alpha+\beta))^\vee \rangle = 3n+i+1.
\end{aligned} \]

\end{proof}

\vskip 1 cm

\subsection{Singular Vectors}

In what follows, let $\hat{\mathfrak{g}}$ be the affine Lie algebra of type $G_2^{(1)}$ and  $\mathcal{U}(\hat{ \frak g})$ its universal enveloping algebra.  

 We write $X^i(-m)=X(-m)^i$ for elements in $\mathcal{U}(\hat{ \frak g})$. We set \[ \begin{array}{rl}
  a = & E_{21}(-1),\\
  
b = & E_{31}(-1) E_{11}(-1)
     -\ E_{32}(-1) E_{10}(-1), \\
     
c = & E^2_{31}(-1) E_{01}(-1)
    -\ E_{32}(-1) E_{31}(-1) H_{01}(-1)
    -\ E^2_{32}(-1) F_{01}(-1), \\
  w = & E_{31}(-1) E_{32}(-2)   
    -\ E_{32}(-1) E_{31}(-2),     \end{array}
 \] and define \[ u =  \tfrac 1 3  a^2 - b, \quad \text{ and } \quad v =  \tfrac 2 9 a^3 -ab -3c .\]

The following proposition determines singular vectors for the first three admissible weights, i.e. $- \frac 5 3 \Lambda_0, -\frac 4 3 \Lambda_0, -\frac 2 3 \Lambda_0$, respectively. 

\begin{Prop} \label{prop-sing}
  The  vector $v_k \in N(k,0)$ is a singular vector for the given value of $k$:
\[ v_k = \left \{ \begin{array}{ll}
u  . \mathbf{1} & \mbox{ for } \ k = -\frac{5}{3}, \\
(v+w) . \mathbf{1} & \mbox{ for } \ k = -\frac{4}{3}, \\
u(v - w) .  \mathbf{1} & \mbox{ for } \ k = -\frac{2}{3} .
\end{array} \right .  \]
\end{Prop}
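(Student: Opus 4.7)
The plan is to verify $\hat{\mathfrak{n}}_+ \cdot v_k = 0$ by exploiting the fact that $\hat{\mathfrak{n}}_+$ is generated as a Lie algebra by the affine Chevalley root vectors attached to the simple roots $\alpha, \beta, \alpha_0 = \delta - \theta$, namely $E_{10}(0)$, $E_{01}(0)$, and $F_{32}(1)$. Thus it suffices to verify
\begin{equation*}
  E_{10}(0) \cdot v_k = 0, \qquad E_{01}(0) \cdot v_k = 0, \qquad F_{32}(1) \cdot v_k = 0
\end{equation*}
for each of $k = -\tfrac{5}{3}, -\tfrac{4}{3}, -\tfrac{2}{3}$. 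Direct inspection of the defining expressions shows that every summand appearing in $v_k$ has the same $\mathfrak{h}$-weight --- respectively $4\alpha + 2\beta$, $6\alpha + 3\beta$, and $10\alpha + 5\beta$ --- so $v_k$ is automatically a $\hat{\mathfrak{h}}$-weight vector.

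The conditions $E_{10}(0) \cdot v_k = 0$ and $E_{01}(0) \cdot v_k = 0$ involve only zero-mode brackets $[x(0), y(-m)] = [x,y](-m)$, and so reduce, after applying $\mathfrak{g} \cdot \mathbf{1} = 0$, to monomial identities in the creation operators $E_{ij}(-m)$. These identities follow from the $G_2$ bracket relations in (\ref{xx}) together with Table 22.1 of \cite{FH}, and in particular are independent of $k$. One verifies that the linear combinations $u$, $v$, $w$ are constructed precisely so that commutation with $E_{10}(0)$ or $E_{01}(0)$ yields telescoping sums which vanish by Jacobi-type identities among the $E_{ij}$.

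The essential step is the verification of $F_{32}(1) \cdot v_k = 0$, and this is where the level $k$ intervenes. Brackets such as $[F_{32}(1), E_{32}(-1)] = -H_{32}(0) + k$, and analogous ones involving $E_{31}(-2)$ and $E_{32}(-2)$, contribute central terms linear in $k$; meanwhile the non-central contributions feed back as elements of $\mathfrak{g}$ in mode $0$ which then act on the remaining creation operators via further iterated commutators. Commuting $F_{32}(1)$ all the way to the right past every creation operator in $v_k$, and then applying $\mathfrak{g} \cdot \mathbf{1} = 0$ together with $\hat{\mathfrak{g}}_+ \cdot \mathbf{1} = 0$, yields an expression whose coefficients depend affine-linearly on $k$. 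The coefficients defining $u$, $v$, $w$ and the combinations $u$, $v+w$, $u(v-w)$ are chosen precisely so that all these coefficients vanish at $k = -\tfrac{5}{3}, -\tfrac{4}{3}, -\tfrac{2}{3}$ respectively.

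The principal obstacle is the combinatorial explosion in the case $k = -\tfrac{2}{3}$: the vector $u(v-w) \cdot \mathbf{1}$ has degree $5$ in the creation operators (with modes both $-1$ and $-2$), so the commutator of $F_{32}(1)$ with $u(v-w)$ must be expanded through five stages, each stage branching into many terms under the $G_2$ bracket rules. A useful organizational tactic is to compute $F_{32}(1) \cdot (v-w) \cdot \mathbf{1}$ separately, then handle the residual terms arising from the outer factor $u$ --- noting that $u \cdot \mathbf{1}$ is already singular at $k = -\tfrac{5}{3}$ from the first case, so that many of the cross terms are controlled by the discrepancy $-\tfrac{2}{3} - (-\tfrac{5}{3}) = 1$. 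The full bookkeeping of these commutator expansions is presumably recorded in Appendix A.
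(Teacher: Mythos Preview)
Your proposal is correct and follows essentially the same approach as the paper: reduce to the three generators $E_{10}(0)$, $E_{01}(0)$, $F_{32}(1)$ of $\hat{\mathfrak n}_+$, observe that the first two annihilations are $k$-independent identities among the $G_2$ root vectors, and isolate the level dependence in the $F_{32}(1)$ check. The paper organizes the bookkeeping through three explicit commutator lemmas --- one recording $[u,E_{10}(0)]=[v,E_{10}(0)]=[w,E_{10}(0)]=0$ (and likewise for $E_{01}(0)$), one listing $[a,F_{32}(1)],\dots,[w,F_{32}(1)]$ with their $K$-dependent terms, and, for $k=-\tfrac23$, a third lemma giving $[F_{01}(0),v-w]$, $[F_{11}(0),v-w]$, $[F_{21}(0),v-w]$, $[H_{32}(0),v-w]$ --- and then assembles them exactly via the Leibniz splitting $[F_{32}(1),u(v-w)]=[F_{32}(1),u](v-w)+u\,[F_{32}(1),v-w]$ that you suggest.
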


The proof will be given in the Appendix A. As one can see in the proof, the computational difficulty increases as the level $k$ goes up. A different approach will be used in a subsequent work of the first-named author on higher levels.

\subsection{Descripton of Zhu's algebra}

\begin{Prop}\label{prop-sing-max}
The maximal $\hat{\mathfrak{g}}$-submodule $J(k,0)$ of $N(k,0)$   is generated by the vector $v_k$ for $k=-\frac{5}{3}$, $-\frac{4}{3}$, $-\frac{2}{3}$, respectively, where $v_k$'s are given in Proposition \ref{prop-sing}.
\end{Prop}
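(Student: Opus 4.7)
My plan is to identify each level $k\in\{-\tfrac{5}{3},-\tfrac{4}{3},-\tfrac{2}{3}\}$ as an admissible level via Lemma \ref{lem-adm}, apply the Kac--Wakimoto description of the maximal submodule of the Verma module $M(k\Lambda_0)$ given in Proposition \ref{prop-KW}, observe that two of the three Kac--Wakimoto generators already vanish in the quotient $N(k,0)$ of $M(k\Lambda_0)$, and then match the singular vector $v_k$ from Proposition \ref{prop-sing} with the sole surviving generator by a one-dimensional weight-space argument.

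The three values $k=-\tfrac{5}{3},-\tfrac{4}{3},-\tfrac{2}{3}$ are the admissible weights $\lambda_{3n+i}$ of Lemma \ref{lem-adm} with $(n,i)=(0,1),(0,2),(1,1)$, and in each case Lemma \ref{lem-adm} gives
\[ \widehat{\Pi}^\vee_{k\Lambda_0} = \{\alpha^\vee,\beta^\vee,\gamma^\vee\}\ \text{ with }\ \gamma := \delta-(2\alpha+\beta), \]
together with $\langle k\Lambda_0+\rho,\alpha^\vee\rangle = \langle k\Lambda_0+\rho,\beta^\vee\rangle = 1$ and $\langle k\Lambda_0+\rho,\gamma^\vee\rangle = 3n+i+1$. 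Proposition \ref{prop-KW} then produces three generating singular vectors $v_\alpha, v_\beta, v_\gamma$ of $M(k\Lambda_0)$ at weights $k\Lambda_0-\alpha$, $k\Lambda_0-\beta$, and $k\Lambda_0-(3n+i+1)\gamma$, respectively. Because the pairings with $\alpha^\vee$ and $\beta^\vee$ equal $1$, standard Verma-module theory forces $v_\alpha\in\mathbb{C}^{\times} F_{10} v_{k\Lambda_0}$ and $v_\beta\in\mathbb{C}^{\times} F_{01} v_{k\Lambda_0}$. These are exactly the relations imposed when $N(k,0)=N(k,V(0))$ is induced from the trivial $\mathfrak{g}$-module, so $v_\alpha,v_\beta$ both map to $0$ in $N(k,0)$. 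Letting $\bar v_\gamma$ denote the image of $v_\gamma$ in $N(k,0)$, and noting that the kernel of $M(k\Lambda_0)\twoheadrightarrow N(k,0)$ lies inside the maximal submodule of $M(k\Lambda_0)$, I obtain $J(k,0) = \mathcal{U}(\hat{\mathfrak{g}})\cdot\bar v_\gamma$.

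It remains to identify $v_k$ with $\bar v_\gamma$ up to a nonzero scalar. A direct weight count --- using that $E_{21}(-1)$ has $\hat{\mathfrak{h}}$-weight $(2\alpha+\beta)-\delta$, and that each monomial appearing in the formulas defining $u\cdot\mathbf{1}$, $(v+w)\cdot\mathbf{1}$, and $u(v-w)\cdot\mathbf{1}$ has total weight $m\bigl((2\alpha+\beta)-\delta\bigr)$ with $m=2,3,5$ in the three cases --- shows that $v_k$ lies in the weight space $N(k,0)_{r_\gamma\cdot(k\Lambda_0)}$. Since $v_k$ is singular of weight strictly below $k\Lambda_0$, the submodule $\mathcal{U}(\hat{\mathfrak{g}})\cdot v_k$ is proper in $N(k,0)$, hence $v_k\in J(k,0)=\mathcal{U}(\hat{\mathfrak{g}})\cdot\bar v_\gamma$. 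The latter is a highest-weight module with highest weight $r_\gamma\cdot(k\Lambda_0)$, so its weight space at that weight is one-dimensional, spanned by $\bar v_\gamma$; therefore $v_k = c_k\,\bar v_\gamma$ for some $c_k\in\mathbb{C}$. A PBW inspection of the explicit formulas for $u,v,w$ shows that each $v_k$ is a nontrivial combination of distinct PBW monomials in $N(k,0)$ --- for instance at $k=-\tfrac{5}{3}$ the three summands $\tfrac{1}{3}E_{21}(-1)^2\mathbf{1}$, $-E_{31}(-1)E_{11}(-1)\mathbf{1}$, and $E_{32}(-1)E_{10}(-1)\mathbf{1}$ are already distinct basis elements --- so $v_k\neq 0$ and thus $c_k\neq 0$. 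Hence $\mathcal{U}(\hat{\mathfrak{g}})\cdot v_k = J(k,0)$.

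The main obstacle is computational rather than conceptual: the nonvanishing of $v_k$ in $N(k,0)$ for $k=-\tfrac{2}{3}$ requires verifying that no cancellation collapses the product $u(v-w)\cdot\mathbf{1}$, but the leading PBW monomial $E_{21}(-1)^5\mathbf{1}$ (uniquely produced from the $\tfrac{1}{3}a^2\cdot\tfrac{2}{9}a^3$ cross-term) survives with coefficient $\tfrac{2}{27}$. Beyond this routine check, the proposition reduces to an application of Kac--Wakimoto together with the explicit vectors supplied by Proposition \ref{prop-sing}.
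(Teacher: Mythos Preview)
Your proof is correct and follows essentially the same route as the paper: invoke Lemma~\ref{lem-adm} and Proposition~\ref{prop-KW} to obtain the three Kac--Wakimoto generators, observe that the $\alpha$- and $\beta$-generators die in $N(k,0)$, and identify $v_k$ with the image of the surviving generator. The paper simply asserts that the image of the Verma singular vector at weight $r_{\delta-(2\alpha+\beta)}\cdot\lambda$ is the explicit $v_k$, whereas you justify this identification via the one-dimensional top weight space of $J(k,0)$ together with a PBW nonvanishing check; this is a harmless elaboration of the same argument.
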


\begin{proof} Let $\lambda_{3n+i}=(-2+n+\tfrac{i}{3})\Lambda_0= k \Lambda_0$ as before. It follows from Proposition \ref{prop-KW} and Lemma \ref{lem-adm} that the maximal submodule of the Verma module $M(\lambda_{3n+i})$ is generated by three singular vectors with weights \[r_{\delta-(2\alpha+\beta)} \cdot \lambda_{3n+i}, \quad r_\alpha \cdot \lambda_{3n+i}, \quad r_\beta \cdot \lambda_{3n+i} , \qquad \text{respectively}.  \] We consider the three cases \[ n=0,i=1, k=-5/3;  \qquad n=0,i=2, k=-4/3; \qquad n=1,i=1, k=-2/3 . \]
In each case, there is a singular vector $u_{k} \in M(\lambda_{3n+i})$ of weight $r_{\delta-(2\alpha+\beta)}.\lambda_{3n+i},$
whose image  under the projection of $M(\lambda_{3n+i})$ onto $N(k, 0)$ is the singular vector $v_k$ given in Proposition \ref{prop-sing}.

The other singular vectors have weights \[ \begin{array}{l} r_{\alpha}\cdot \lambda_{3n+i}  =  \lambda_{3n+i} - \langle \lambda_{3n+i}+\rho,\alpha^{\vee}\rangle\alpha = \lambda_{3n+i} - \alpha , \quad \mbox{ and }\\ r_{\beta}\cdot \lambda_{3n+i} = \lambda_{3n+i} - \langle \lambda_{3n+i}+\rho,\beta^{\vee}\rangle\beta = \lambda_{3n+i} - \beta,\end{array} \] so the images of these vectors under the projection of $M(\lambda_{3n+i})$ onto $N(k, 0)$ are $0$ from the definition.  Therefore the maximal submodule of $N(k, 0)$ is generated by the singular vector $v_k$, i.e. $J(k, 0) = \mathcal{U}(\hat{\mathfrak{g}})v_k$.
\end{proof}

Now we consider the image of a singular vector $v_k$ under Zhu's map \[ [\cdot ]: N(k,0)\rightarrow A(N(k,0)) \cong \mathcal{U}(\mathfrak{g}),\]  which is defined in Section 1.  We recall that the vertex algebra $N(k,0)$ is (linearly) isomorphic to the associative algebra $\mathcal{U}(\hat{\mathfrak{g}}_-)$.  We thus have an induced map from $\mathcal{U}(\hat{\mathfrak{g}}_-)$ to $\mathcal{U}(\mathfrak{g})$ and  a commutative diagram of linear maps:
\medskip
\begin{center}
$\begin{array}{ccc}
\mathcal{U}(\hat{\mathfrak{g}}_-) & \simeq &  N(k,0)  \\
\downarrow & &\downarrow \\
\mathcal{U}(\mathfrak{g}) & \simeq & A(N(k,0))
\end{array}$ 
\end{center}
\medskip
We will identify $N(k,0)$ with $\mathcal{U}(\mathfrak{\hat{g}}_-)$ and $A(N(k,0))$ with $\mathcal{U}(\mathfrak{g})$.  We have:
$$
\begin{array}{l}
[a] = E_{21},\\

[b] = E_{31} E_{11} - E_{32} E_{10} ,\\

[c] =E^2_{31}E_{01} - E_{32}E_{31}H_{01} - E^2_{32}F_{01}.
\end{array}$$
We also have:
\begin{equation} \label{eqn-bracket} \begin{array}{l}
[u]= \tfrac{1}{3} [a]^2 - [b] , \\

[v]= \tfrac{2}{9} [a]^3 - [a][b] - 3 [c] , \\

[w]=0 ,  \\

 [u(v-w)] = [u][v]= \tfrac{2}{27}[a]^5-\tfrac{5}{9}[a]^3[b]-[a]^2[c]+[a][b]^2+3[b][c] .
\end{array} \end{equation}

The following theorem is now a consequence of Propositions \ref{prop-important} and \ref{prop-sing-max}.

\begin{Thm} \label{thm-zhu-image}
The associative algebra $A(L(k,0))$ is isomorphic to $\mathcal{U}(\mathfrak{g})/I_{k}$,  where $ I_{k}$ is the two-sided ideal of $\mathcal{U}(\mathfrak{g})$ generated by the vector $[v_k]$, where
\[ [v_k] = \left \{ \begin{array}{ll}
[u] & \quad \mbox{ for } k = -\frac{5}{3}, \\

[v] & \quad \mbox{ for } k = -\frac{4}{3}, \\

[uv] & \quad \mbox{ for } k = -\frac{2}{3}.
\end{array} \right .  \]
\end{Thm}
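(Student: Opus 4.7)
The plan is to obtain the theorem as a direct consequence of chaining Proposition~\ref{prop-important} with Proposition~\ref{prop-sing-max}, and then to read off the explicit generator $[v_k]$ of the ideal $I_k$ from the list of identities in equation~(\ref{eqn-bracket}). No new input beyond what has already been established is required.

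First, I would invoke Proposition~\ref{prop-sing-max}, which says that for each of the three admissible levels $k=-\tfrac{5}{3},-\tfrac{4}{3},-\tfrac{2}{3}$, the maximal $\hat{\mathfrak g}$-submodule of $N(k,0)$ equals $\mathcal U(\hat{\mathfrak g})\, v_k$ for the singular vector $v_k$ of Proposition~\ref{prop-sing}. This is exactly the hypothesis needed to apply Proposition~\ref{prop-important}, which then yields an isomorphism
\[ A(L(k,0))\ \cong\ \mathcal U(\mathfrak g)\big/\langle F([v_k])\rangle, \]
under the Frenkel--Zhu identification $A(N(k,0))\cong\mathcal U(\mathfrak g)$. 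Thus the only remaining task is to identify $F([v_k])\in\mathcal U(\mathfrak g)$ in each of the three cases.

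Second, I would read off $F([v_k])$ using the identities collected in equation~(\ref{eqn-bracket}), which were obtained by a direct computation with Zhu's map. For $k=-\tfrac{5}{3}$ the singular vector is $u\cdot\mathbf 1$, so $F([v_k])=[u]$. For $k=-\tfrac{4}{3}$ the singular vector is $(v+w)\cdot\mathbf 1$; since Zhu's map is linear and $[w]=0$ (because $w$ is built from $(-2)$-modes and reduces to zero modulo $O(N(k,0))$), we obtain $F([v_k])=[v]+[w]=[v]$. For $k=-\tfrac{2}{3}$ the singular vector is $u(v-w)\cdot\mathbf 1$; again using $[w]=0$ together with the fact that the $*$-product on Zhu's algebra corresponds under $F$ to the product on $\mathcal U(\mathfrak g)$, one gets $F([v_k])=[u(v-w)]=[u]\,[v]=[uv]$, matching the displayed formula.

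There is no real obstacle: once Propositions~\ref{prop-sing-max} and~\ref{prop-important} are at hand, the theorem is a bookkeeping statement. The only place one must be a little careful is the $k=-\tfrac{2}{3}$ case, where the passage from $[u(v-w)]$ to $[uv]$ relies on the vanishing $[w]=0$ in $A(N(k,0))$ and on the compatibility of the $*$-product with the identification $A(N(k,0))\cong\mathcal U(\mathfrak g)$; both are already packaged into the list~(\ref{eqn-bracket}), so the proof at this stage amounts to quoting the appropriate line.
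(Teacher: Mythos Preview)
Your proposal is correct and follows exactly the paper's approach: the theorem is stated as an immediate consequence of Propositions~\ref{prop-important} and~\ref{prop-sing-max}, with the explicit generators $[v_k]$ read off from the identities in~(\ref{eqn-bracket}). The only additional explanation you supply---why $[w]=0$ and how this collapses $[v+w]$ to $[v]$ and $[u(v-w)]$ to $[uv]$---is a slight elaboration of what the paper leaves implicit in the line ``The following theorem is now a consequence of Propositions~\ref{prop-important} and~\ref{prop-sing-max}.''
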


\vskip 1 cm 
 
 \section{Irreducible modules} 
 In this section we adopt the method from \cite{A1,AM,MP,P,P1} in oder to classify irreducible $A(L(k, 0))$-modules from the category $\mathcal{O}$ by solving certain systems of polynomial equations.
 
 \subsection{Modules for associative algebra $A(L(k, 0))$. }
 
 Denote by $_L$ the adjoint action of $\mathcal{U}(\mathfrak{g})$ on $\mathcal{U}(\mathfrak{g})$ defined by $X_Lf = { [ X, f]}$ for $ X \in \mathfrak{g}$ and $f \in \mathcal{U}(\mathfrak{g})$.  We also write $(ad \, X)f = X_Lf= [X,f]$. Then $ad \, X$ is a derivation on $\mathcal{U}(\mathfrak{g})$. Let $R(k)$ be a $\mathcal{U}(\mathfrak{g})$-submodule of $\mathcal{U}(\mathfrak{g})$ generated by the vector $[v_{k}]$, where $[v_k]$ is given in Theorem \ref{thm-zhu-image}.  It is straightforward to see that $R(k)$ is an irreducible finite-dimensional $\mathcal{U}(\mathfrak{g})$-module isomorphic to $V((3k+7)(2\alpha+\beta))$.  Let $R(k)_0$ be the zero-weight subspace of $R(k)$.
 
 \begin{Prop} \cite{A1,AM}
 Let $V(\mu)$ be an irreducible highest weight $\mathcal{U}(\mathfrak{g})$-module with highest weight vector $v_\mu$ for $\mu \in \mathfrak{h}^*$.  Then the following statements are equivalent:
\begin{enumerate}
\item  $V(\mu)$ is an $A(L(k, 0))$-module,
\item   $R(k) \cdot V(\mu) = 0$,
\item   $R(k)_0 \cdot v_\mu = 0$. 
\end{enumerate}
 \end{Prop}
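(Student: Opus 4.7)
The plan is to prove $(1)\Leftrightarrow(2)$ and $(2)\Leftrightarrow(3)$ separately. The equivalence $(1)\Leftrightarrow(2)$ is essentially formal from Theorem~\ref{thm-zhu-image}: $V(\mu)$ is an $A(L(k,0))$-module precisely when the two-sided ideal $I_k = \mathcal{U}(\mathfrak{g})\,[v_k]\,\mathcal{U}(\mathfrak{g})$ annihilates it, which since $V(\mu)$ is already a $\mathcal{U}(\mathfrak{g})$-module reduces to $[v_k]\cdot V(\mu) = 0$. Using the identity $(\mathrm{ad}\,X)[v_k] = X\,[v_k] - [v_k]\,X$ and iterating, one sees that $[v_k] V(\mu)=0$ is equivalent to $R(k)\cdot V(\mu) = 0$; the reverse direction is trivial since $[v_k]\in R(k)$.

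The direction $(2)\Rightarrow(3)$ is immediate. For $(3)\Rightarrow(2)$, I plan to first prove $R(k)\cdot v_\mu = 0$ and then extend to all of $V(\mu)$. The extension is the easy part: the subspace $M=\{v\in V(\mu)\mid R(k)\cdot v=0\}$ is a $\mathcal{U}(\mathfrak{g})$-submodule, because for $X\in\mathfrak{g}$, $v\in M$, and $r\in R(k)$, the relation $r(Xv)=X(rv)-(\mathrm{ad}\,X)(r)\cdot v=0$ uses the $\mathfrak{g}$-invariance of $R(k)$; once $v_\mu\in M$, irreducibility of $V(\mu)$ forces $M=V(\mu)$.

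The main obstacle is proving $R(k)\cdot v_\mu = 0$: elements of $R(k)$ of strictly negative weight need not kill $v_\mu$ for purely weight reasons, so assumption $(3)$ is not directly applicable to them. My plan is to consider the finite-dimensional subspace $W = R(k)\cdot v_\mu \subseteq V(\mu)$ and show $W=0$ by contradiction. A short calculation using $X(r v_\mu) = (\mathrm{ad}\,X)(r)\cdot v_\mu + r\,X v_\mu$ shows that $W$ is $\mathfrak{b}$-invariant, where $\mathfrak{b}=\mathfrak{h}\oplus\mathfrak{n}_+$; indeed $Xv_\mu=0$ for $X\in\mathfrak{n}_+$ and $Xv_\mu\in\mathbb{C}v_\mu$ for $X\in\mathfrak{h}$. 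Because each element of $\mathfrak{n}_+$ acts locally nilpotently on the highest-weight module $V(\mu)$, it acts by nilpotent operators on the finite-dimensional invariant subspace $W$, and Engel's theorem supplies a non-zero $w\in W$ with $\mathfrak{n}_+\cdot w=0$. Such a $w$ is a singular vector in the irreducible module $V(\mu)$, hence a non-zero multiple of $v_\mu$, so $v_\mu\in W$ and $v_\mu = r\cdot v_\mu$ for some $r\in R(k)$. Decomposing $r=\sum_\nu r_\nu$ by $\mathfrak{h}$-weight and projecting onto the $\mu$-weight line of $V(\mu)$, only the $\nu=0$ component survives, giving $r_0\cdot v_\mu=v_\mu\neq 0$ with $r_0\in R(k)_0$. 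This contradicts $(3)$ and forces $W=0$, completing the argument.
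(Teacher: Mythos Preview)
The paper does not supply its own proof of this proposition; it is quoted from \cite{A1,AM} and used as a black box, so there is nothing in the text to compare your argument against directly. Your proof is correct and is in the spirit of the arguments in those references: the equivalence $(1)\Leftrightarrow(2)$ is exactly the reduction from the two-sided ideal $I_k=\langle[v_k]\rangle$ to the $\mathrm{ad}$-module $R(k)$ it generates, and your treatment of $(3)\Rightarrow(2)$ via the $\mathfrak b$-invariant finite-dimensional subspace $W=R(k)\cdot v_\mu$ is a clean way to organize the standard argument.

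One small point worth tightening. Engel's theorem yields a non-zero $w\in W$ with $\mathfrak n_+\cdot w=0$, but not a priori a \emph{weight} vector, whereas ``singular vector'' in this paper means a weight vector. This is harmless: since $W$ is $\mathfrak h$-stable you can decompose $w=\sum_\lambda w_\lambda$ and each non-zero component is a genuine singular vector, hence a multiple of $v_\mu$, forcing $w\in\mathbb C v_\mu$. Even more simply, you can bypass Engel altogether: pick any weight $\lambda$ maximal among the finitely many weights of $W$ and any $0\neq w\in W_\lambda$; then $\mathfrak n_+\cdot w\subseteq\sum_{\alpha>0}W_{\lambda+\alpha}=0$ by maximality, and you already have a singular weight vector.
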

  
 Let $r\in R(k)_0$.  Then there exists a unique polynomial $p_r \in S(\mathfrak{h})$, where $S(\mathfrak{h})$ is the symmetric algebra of $\mathfrak h$,  such that \[r \cdot v_\mu = p_r(\mu) v_\mu. \]  Set $\mathcal{P}(k)_0 = \{p_r \, | \, r \in \mathcal {R}(k)_0 \}$.  Then we have:
 
 \begin{Cor}  \label{cor-biject} There is a bijective correspondence between \begin{enumerate} \item the set of irreducible $A(L(k, 0) )$-modules $V(\mu)$ from the category $\mathcal{O}$, and \item the set of weights $\mu \in \mathfrak{h}^*$ such that $p(\mu)=0$ for all $p \in \mathcal{P}(k)_0$. \end{enumerate}
 \end{Cor}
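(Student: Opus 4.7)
The proof is essentially an unpacking of the preceding proposition together with a standard classification-of-irreducibles argument, so I would keep it short.

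The plan is as follows. First I would observe that any $r \in R(k)_0$, being an element of $\mathcal{U}(\mathfrak{g})$ of adjoint weight zero, preserves weight spaces in any $\mathfrak{g}$-module. In the irreducible highest weight module $V(\mu)$ the weight space $V(\mu)_\mu$ is one-dimensional, spanned by $v_\mu$, so $r \cdot v_\mu$ is necessarily a scalar multiple of $v_\mu$. Writing $r$ in PBW form relative to $\mathfrak{n}_- \oplus \mathfrak{h} \oplus \mathfrak{n}_+$ and using $\mathfrak{n}_+ \cdot v_\mu = 0$, only the $S(\mathfrak{h})$-component of $r$ (modulo terms with a factor from $\mathfrak{n}_+$ on the right) contributes, and that component evaluated at $\mu$ is the scalar $p_r(\mu)$; this justifies the definition of $p_r$ and hence of $\mathcal{P}(k)_0$.

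Next I would combine this with the preceding proposition: $V(\mu)$ is an $A(L(k,0))$-module if and only if $R(k)_0 \cdot v_\mu = 0$, which by the previous paragraph is equivalent to $p_r(\mu) = 0$ for every $r \in R(k)_0$, i.e.\ to $p(\mu) = 0$ for every $p \in \mathcal{P}(k)_0$. This gives one direction of the bijection: a weight $\mu$ with $p(\mu)=0$ for all $p \in \mathcal{P}(k)_0$ yields the irreducible $A(L(k,0))$-module $V(\mu)$ in category $\mathcal{O}$.

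For the other direction I would invoke standard category $\mathcal{O}$ theory: since $A(L(k,0)) = \mathcal{U}(\mathfrak{g})/I_k$ is a quotient of $\mathcal{U}(\mathfrak{g})$, any $A(L(k,0))$-module is a $\mathfrak{g}$-module, and an irreducible $\mathfrak{g}$-module in category $\mathcal{O}$ is isomorphic to a unique $V(\mu)$ for some $\mu \in \mathfrak{h}^*$. Applying the previous paragraph to this $\mu$ shows $p(\mu) = 0$ for all $p \in \mathcal{P}(k)_0$, so $\mu$ lies in the target set; the uniqueness of $\mu$ up to isomorphism of $V(\mu)$ gives injectivity, and this completes the bijection.

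There is no real obstacle here; if anything, the only point that requires a brief argument rather than citation is the first one, showing that $r \cdot v_\mu = p_r(\mu) v_\mu$ for a well-defined polynomial $p_r \in S(\mathfrak{h})$. Everything else is a direct consequence of the preceding proposition and the classification of irreducibles in category $\mathcal{O}$ by highest weight.
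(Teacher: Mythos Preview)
Your proposal is correct and matches the paper's intent: the paper states this corollary without proof, treating it as immediate from the preceding proposition and the definition of $\mathcal{P}(k)_0$. Your write-up simply supplies the standard details (the PBW argument for $p_r$, the equivalence via the proposition, and the classification of irreducibles in category $\mathcal{O}$ by highest weight), which is exactly what is implicit in the paper.
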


\medskip

\subsection{Polynomials in  $\mathcal{P}(k)_0$}

We now determine some polynomials in the set $\mathcal{P}(k)_0$ for the cases $k= - \frac 5 3$, $k= - \frac 4 3$, $k= - \frac 2 3$, respectively. We will use some computational lemmas which we collect and prove in Appendix B.

\begin{Lem}[Case: $k=- \frac 5 3$] \label{lem-first-case} We let 
\[ (1) \ q(H) =  H_{21}( H_{21} +2 ),  \quad (2) \ p_1(H) = H_{10}(H_{10}-1), \quad \text{and } \quad (3)\ p_2(H)=  \tfrac{1}{3}H_{11}(H_{11}-1) +3H_{01} .\]
 Then $q(H), p_1(H), p_2(H) \in \mathcal{P}(-\frac 5 3)_0$.
 \end{Lem}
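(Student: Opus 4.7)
The plan is to produce three elements of $R(-\tfrac{5}{3})_0$ whose actions on a highest-weight vector $v_\mu$ realize scalar multiplication by $q(H)$, $p_1(H)$, and $p_2(H)$, respectively. Recall that $R(-\tfrac{5}{3}) \cong V(2(2\alpha+\beta))$ is the irreducible $\mathcal{U}(\mathfrak{g})$-submodule of $\mathcal{U}(\mathfrak{g})$ generated, under the adjoint action, by the highest-weight vector $[u] = \tfrac{1}{3}E_{21}^2 - E_{31}E_{11} + E_{32}E_{10}$ of weight $4\alpha+2\beta$. Hence every zero-weight vector in $R(-\tfrac{5}{3})$ arises as $(\operatorname{ad} f)\,[u]$ for some $f \in \mathcal{U}(\mathfrak{n}_-)$ of weight $-(4\alpha+2\beta)$, and the three simplest lowering operators of this weight are
\[
(\operatorname{ad} F_{21})^2, \qquad (\operatorname{ad} F_{31})(\operatorname{ad} F_{11}), \qquad (\operatorname{ad} F_{32})(\operatorname{ad} F_{10}).
\]

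The key simplification is a weight argument. Since $[u] \in \mathcal{U}(\mathfrak{n}_+)$ has weight $4\alpha+2\beta$ and $v_\mu$ is a highest-weight vector, $[u]\,F\,v_\mu$ vanishes whenever $F \in \mathcal{U}(\mathfrak{n}_-)$ has weight $-\gamma$ with $\gamma \ngeq 4\alpha+2\beta$ in the order from $Q_+$, because the total weight would then lie strictly above $\mu$. Expanding each iterated commutator $(\operatorname{ad} F)(\operatorname{ad} F')[u]$ as a four-term alternating sum in $\mathcal{U}(\mathfrak{g})$ and applying to $v_\mu$, every term but one collapses for this reason, leaving
\[
[u]\,F_{21}^2\,v_\mu, \qquad [u]\,F_{11}F_{31}\,v_\mu, \qquad [u]\,F_{10}F_{32}\,v_\mu
\]
as the three quantities to be evaluated. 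I would then reduce each of these to a scalar multiple of $v_\mu$ by commuting the positive root vectors inside $[u]$ past the lowering operators on the right, using the $G_2$ brackets from (1.1) and Table~22.1 of [FH] together with the $\mathfrak{sl}_2$ identity $EF^n v_\lambda = n(\lambda-n+1)F^{n-1}v_\lambda$. For instance, the $E_{21}^2 F_{21}^2$ piece in the first case gives $\tfrac{2}{3}H_{21}(H_{21}-1)v_\mu$ by a direct $\mathfrak{sl}_2$ calculation, and the cross terms from $E_{31}E_{11}F_{21}^2$ and $E_{32}E_{10}F_{21}^2$ supply the linear-in-$H_{21}$ correction needed to assemble $q(H)=H_{21}(H_{21}+2)$ up to an overall scalar. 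The second and third reductions similarly isolate $H_{11}$ and $H_{10}$, and the $3H_{01}$ summand in $p_2$ emerges naturally when a bracket of type $[E_{ij},F_{ij}]=H_{ij}$ is generated while reordering $E_{31}E_{11}F_{11}F_{31}$. If the three raw polynomials produced this way do not already match $q,p_1,p_2$, one takes an invertible linear combination of them inside $R(-\tfrac{5}{3})_0$.

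The main obstacle is purely combinatorial: moving a degree-two product of lowering operators past a degree-two product of positive root vectors produces many intermediate brackets $[E_{ij}, F_{kl}]$, each of which is either zero, a Cartan element $H_{mn}$, or another root vector that itself demands a further bracket, and each evaluation requires the correct $G_2$ structure constant. To keep the argument tidy I would isolate the needed bracket identities together with a few $\mathfrak{sl}_2$-style reduction rules as preparatory computational lemmas---exactly the role played by Appendix B of the paper---after which each of the three polynomials drops out from a finite, mechanical PBW reduction.
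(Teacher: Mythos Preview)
Your approach is correct and genuinely different from the paper's. You lower $[u]$ directly to weight zero via $(\operatorname{ad} F_{21})^2$, $(\operatorname{ad} F_{31})(\operatorname{ad} F_{11})$, $(\operatorname{ad} F_{32})(\operatorname{ad} F_{10})$, and then exploit the highest-weight condition to collapse the action on $v_\mu$ to $[u]\,F F'\,v_\mu$; this weight argument is clean and the remaining PBW reduction is finite. The paper instead uses ``lower-then-raise'' adjoint operators $(E_{21}^2F_{21}^4)_L$, $(E_{10}^2F_{31}^2)_L$, $(E_{11}^2F_{32}^2)_L$ applied to $[u]$ and reduces the result modulo $\mathcal{U}(\mathfrak{g})\mathfrak{n}_+$, relying on the Appendix~B lemmas (notably Lemma~\ref{lem-thirdstep}) to mechanize the computation. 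The payoff of the paper's choice is that each operator is tailored so the output is \emph{already} a nonzero scalar multiple of $q$, $p_1$, $p_2$ respectively, with no linear-combination step needed; the same framework then scales uniformly to the degree-$3$ and degree-$5$ cases $k=-\tfrac{4}{3},-\tfrac{2}{3}$. Your route is arguably more transparent for this low-degree case, but the polynomials you actually obtain from $[u]F_{11}F_{31}v_\mu$ and $[u]F_{10}F_{32}v_\mu$ will generically be mixtures of $q,p_1,p_2$ (the cross-brackets of $E_{21},E_{31},E_{11},E_{32},E_{10}$ against $F_{11}F_{31}$ or $F_{10}F_{32}$ do not single out one $H_{ij}$), so you must carry out the final linear-algebra step you mention, and verify that your three zero-weight elements are independent in the $3$-dimensional space $R(-\tfrac{5}{3})_0$.
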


\begin{proof}
(1) We show that $(E_{21}^2 F_{21}^4)_L [u] \equiv C\, q(H)  \ (\mathrm{mod} \ \mathcal{U}(\mathfrak{g})\mathfrak{n}_+) $ for some $C \ne 0$.  Using Lemma \ref{lem-secondstep} and Lemma \ref{lem-thirdstep}, we have 
\begin{eqnarray*}
(E_{21}^2F_{21}^4)_L [u] & = &(E_{21}^2F_{21}^4)_L (\tfrac{1}{3}[a]^2-[b])\\
 &\equiv & 4!2!(\tfrac{1}{3} H_{21}(H_{21}-1) + H_{21})   \equiv 4!2!\tfrac{1}{3} H_{21}(H_{21}+2)  \quad (\mathrm{mod} \ \mathcal{U}(\mathfrak{g})\mathfrak{n}_+)  ,
\end{eqnarray*}
which is what we wanted to show.

(2) We will show that $(E_{10}^2 F_{31}^2)_L [u] \equiv C\,  p_1(H)  \ (\mathrm{mod} \ \mathcal{U}(\mathfrak{g})\mathfrak{n}_+) $ for some $C \ne 0$.  We again use Lemma \ref{lem-secondstep} and Lemma \ref{lem-thirdstep} to obtain:
\[
(E_{10}^2F_{31}^2)_L (\tfrac{1}{3}[a]^3-[b]) \equiv (2!)^2\ \tfrac{1}{3} H_{10}(H_{10}-1) \equiv \tfrac{4}{3}p_1(H)  \quad (\mathrm{mod} \ \mathcal{U}(\mathfrak{g})\mathfrak{n}_+) .
\]

(3) In this case we show that $(E_{11}^2 F_{32}^2)_L [u] \equiv C\, p_2(H)  \ (\mathrm{mod} \ \mathcal{U}(\mathfrak{g})\mathfrak{n}_+) $ for some $C \ne 0$.  Similarly to the first two cases we compute:
\begin{eqnarray*}
(E_{11}^2F_{32}^2)_L(\tfrac{1}{3}[a]^2-[b]) &\equiv & (2!)^2\ ( \tfrac{1}{3} H_{11}(H_{11}-1) + 3 H_{01}) \\ & \equiv & C\, p_2(H)  \quad (\mathrm{mod} \ \mathcal{U}(\mathfrak{g})\mathfrak{n}_+) .
\end{eqnarray*}
 \end{proof}
 
We now give polynomials for the next  case.

\begin{Lem}[Case: $k=- \frac 4 3$] \label{lem-second-case} Let 
\begin{enumerate}
\item $q(H) =  \frac{2}{9}H_{21}(H_{21}-1)(H_{21}-2) +\ H_{21} (H_{21}-2) +3 H_{01} (H_{01}+2)$,
\item $p_1(H) =  H_{10}(H_{10}-1)(H_{10}-2) $,
\item $p_2(H)  =  \frac{2}{9}H_{11}(H_{11}-1)(H_{11}-2) +6 H_{01} H_{32}$.
\end{enumerate}
 Then $p_1(H),p_2(H), q(H) \in \mathcal{P}(-\frac 4 3)_0$.
 \end{Lem}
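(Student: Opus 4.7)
The plan is to produce $q(H), p_1(H), p_2(H)$ by applying three weight-lowering adjoint operators to $[v_{-4/3}] = [v] = \tfrac{2}{9}[a]^3 - [a][b] - 3[c]$ given in (\ref{eqn-bracket}), and then reducing modulo $\mathcal{U}(\mathfrak{g})\mathfrak{n}_+$. Since $[v]$ has weight $3(2\alpha+\beta)$, any operator of weight $-3(2\alpha+\beta) = -(6\alpha+3\beta)$ produces an element of $R(-\tfrac43)_0$ whose image in $\mathcal{U}(\mathfrak{h})$ is, by definition, a polynomial in $\mathcal{P}(-\tfrac43)_0$. Guided by the choices made in the proof of Lemma \ref{lem-first-case}, I would take
\[
(1)\ (E_{21}^3 F_{21}^6)_L, \qquad (2)\ (E_{10}^3 F_{31}^3)_L, \qquad (3)\ (E_{11}^3 F_{32}^3)_L,
\]
each of which has the correct weight $-3(2\alpha+\beta)$; the first is suggested by the appearance of $H_{21}, H_{01}$ in $q$, the second by $H_{10}$ in $p_1$, and the third by $H_{11}, H_{32}, H_{01}$ in $p_2$.

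For part (1), I expect the summand $\tfrac{2}{9}[a]^3 = \tfrac{2}{9}E_{21}^3$ to contribute the cubic piece $\tfrac{2}{9}H_{21}(H_{21}-1)(H_{21}-2)$ by an $\mathfrak{sl}_2$-triple calculation along the coroot $H_{21}$, the summand $-[a][b]$ to contribute a nonzero multiple of $H_{21}(H_{21}-2)$ after Leibniz expansion and the reductions of Appendix B, and the summand $-3[c]$ to contribute a multiple of $3H_{01}(H_{01}+2)$; summing and dividing out a common nonzero scalar yields $q(H)$. For part (2), only $\tfrac{2}{9}[a]^3$ should survive modulo $\mathcal{U}(\mathfrak{g})\mathfrak{n}_+$, since the $E$-monomials in $[b]$ and $[c]$ cannot be reduced to the Cartan direction along $H_{10}$ by three applications of $(F_{31})_L$ followed by three applications of $(E_{10})_L$; the residue is a nonzero multiple of $p_1(H)$. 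For part (3), $\tfrac{2}{9}[a]^3$ produces $\tfrac{2}{9}H_{11}(H_{11}-1)(H_{11}-2)$, the $-[a][b]$ term collapses into $\mathcal{U}(\mathfrak{g})\mathfrak{n}_+$, and the cross-term $6H_{01}H_{32}$ arises from the middle summand $-E_{32}E_{31}H_{01}$ of $-3[c]$ under $(F_{32})_L^3$ followed by $(E_{11})_L^3$.

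The main obstacle is the sheer volume of bookkeeping. Each Leibniz expansion of $(F_{21})_L^6$, $(F_{31})_L^3$, or $(F_{32})_L^3$ across triple products such as $[a][b]$ or $E_{31}^2E_{01}$ produces many terms, and the heart of the argument is to show that everything except the anticipated Cartan contributions lies in $\mathcal{U}(\mathfrak{g})\mathfrak{n}_+$. The computational lemmas of Appendix B, already invoked in the proof of Lemma \ref{lem-first-case}, supply the needed identities for iterated brackets of monomials in the $F_{ij}$'s against the root vectors $E_{kl}$, and will serve as the technical workhorse. Once these reductions are executed in detail and the residual Cartan polynomial is tabulated, matching it up to a nonzero scalar against $q(H), p_1(H), p_2(H)$ is a direct verification.
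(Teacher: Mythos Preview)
Your approach is exactly the paper's: apply $(E_{21}^3 F_{21}^6)_L$, $(E_{10}^3 F_{31}^3)_L$, and $(E_{11}^3 F_{32}^3)_L$ to $[v]$ and reduce via Lemmas \ref{lem-secondstep} and \ref{lem-thirdstep} of Appendix B.

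One correction to your heuristic in part (3): the term $-[a][b]$ does \emph{not} collapse into $\mathcal{U}(\mathfrak{g})\mathfrak{n}_+$. By Lemma \ref{lem-secondstep} one has $(E_{11}^2F_{32}^2)_L[b] \equiv -12\,H_{01}$, so after applying Lemma \ref{lem-thirdstep} the summand $-[a][b]$ contributes a nonzero multiple of $(H_{11}-2)H_{01}$. Meanwhile $-3[c]$ contributes a multiple of $H_{01}(H_{31}+2)$ (again by Lemma \ref{lem-secondstep}), and it is precisely the \emph{sum} of these two pieces that collapses to $6H_{01}H_{32}$ via the coroot identity $H_{11}+H_{31}=2H_{32}$. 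If you carry out the computation as you describe but without prematurely discarding the $-[a][b]$ contribution, you recover the paper's proof verbatim.
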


 \begin{proof}
(1) We show that $(E^3_{21}F^6_{21})_L [v] \equiv C q(H) \ (\mathrm{mod} \ \mathcal{U}(\mathfrak{g})\mathfrak{n}_+) $ for some constant $C \neq 0$.
By Lemma \ref{lem-thirdstep}, we have:
\begin{eqnarray*}
(E_{21}^3F_{21}^6)_L [v]& =& (E_{21}^3F_{21}^6)_L(\tfrac{2}{9}[a]^3 -[a][b]-3[c])\\
& \equiv & -3!6! \tfrac{2}{9} H_{21}(H_{21}-1)(H_{21}-2) -\tfrac{3!}{2!}\tfrac{6!}{4!} (H_{21}-2)(E_{21}^2F_{21}^4)_L [b] \\ & & -3 (E_{21}^3F_{21}^6)_L[c]  \quad (\mathrm{mod} \ \mathcal{U}(\mathfrak{g})\mathfrak{n}_+) .
\end{eqnarray*}
By Lemma \ref{lem-secondstep}, we thus have:
\begin{eqnarray*}
(E_{21}^3F_{21}^6)_L [v]& \equiv & -3!6! \tfrac{2}{9}H_{21}(H_{21}-1)(H_{21}-2) + 3!6! (H_{21}-2) H_{21}+ 3!6! H_{01}(H_{01}+2) \\
& \equiv & C q(H)  \quad (\mathrm{mod} \ \mathcal{U}(\mathfrak{g})\mathfrak{n}_+) .
\end{eqnarray*}

(2) We will show that $(E_{10}^3F_{31}^3)_L [v] \equiv C p_1(H) \ (\mathrm{mod} \ \mathcal{U}(\mathfrak{g})\mathfrak{n}_+) $ for some constant $C \neq 0$.
Using Lemma \ref{lem-thirdstep}, we obtain:
\begin{eqnarray*}
& & (E_{10}^3F_{31}^3)_L (\tfrac{2}{9}[a]^3 - [a][b] - 3 [c]) \\ & \equiv & \tfrac{2}{9} (3!)^2 H_{10}(H_{10}-1)(H_{10}-2) + \tfrac{3!}{2!}\tfrac{3!}{2!}(H_{10}-2) (E_{10}^2F_{31}^2)_L [b]  -3 (E_{10}^3F_{31}^3)_L [c] \quad (\mathrm{mod} \ \mathcal{U}(\mathfrak{g})\mathfrak{n}_+) .
\end{eqnarray*}
By Lemma \ref{lem-secondstep}, we thus have
\begin{eqnarray*}
(E_{10}^3F_{31}^3)_L (\tfrac{2}{9}[a]^3 - [a][b] - 3 [c])& \equiv & \tfrac{2}{9} (3!)^2 H_{10}(H_{10}-1)(H_{10}-2)  \\
&\equiv & C p_1(H)  \quad (\mathrm{mod} \ \mathcal{U}(\mathfrak{g})\mathfrak{n}_+) .
\end{eqnarray*}

(3) Finally, we show that $(E_{11}^3F_{32}^3)_L [v]  \equiv C p_2(H) \ (\mathrm{mod} \ \mathcal{U}(\mathfrak{g})\mathfrak{n}_+) $ for some constant $C \neq 0$.  Since $H_{11}+H_{31} = 2 H_{32}$, we have 
\begin{eqnarray*}
(E_{11}^3F_{32}^3)_Lv '& \equiv& \tfrac{2}{9} (3!)^2\ H_{11}(H_{11}-1)(H_{11}-2) - \tfrac{3!}{2!}\tfrac{3!}{2!} (H_{11}-2) (E_{11}^2F_{32}^2)_L [b] - 3 (E_{11}^3F_{32}^3)_L [c]\\
&\equiv & (3!)^2\ (\tfrac{2}{9}H_{11}(H_{11}-1)(H_{11}-2) + 3 (H_{11}-2) H_{01} + 3 H_{01} (H_{31}+2))\\
&\equiv & (3!)^2\ (\tfrac{2}{9}H_{11}(H_{11}-1)(H_{11}-2) +  6 H_{01}H_{32} ) \\
& \equiv & C p_2(H)  \quad (\mathrm{mod} \ \mathcal{U}(\mathfrak{g})\mathfrak{n}_+) .
\end{eqnarray*} 
  \end{proof} 

The last case is presented below.

 \begin{Lem}[Case: $k=-\frac 2 3$]\label{lem-third-case} We let 
 \begin{eqnarray*}
 q(H) &= & \tfrac{2}{27}H_{21}(H_{21}-1)(H_{21}-2)(H_{21}-3)(H_{21}-4) +\tfrac{5}{9} H_{21} (H_{21}-2)(H_{21}-3)(H_{21}-4)\\
      & &+\ (H_{21}-3)(H_{21}-4) H_{01}( H_{01}+2) + 2 H_{21}(H_{21}-4)(H_{11}-1)  \\
      & & +\ 2 (H_{21}-4)H_{10}(H_{10}-1)-\ 6(H_{21}-4)H_{01}(H_{01}+1) \ +\ 6 (H_{21}-3)H_{01} (H_{01}+2), \\
 p_1(H) &=& H_{10}(H_{10}-1)(H_{10}-2)(H_{10}-3)(H_{10}-4),\\
   p_2(H) &=&  \tfrac{2}{27}H_{11}(H_{11}-1)(H_{11}-2)(H_{11}-3)(H_{11}-4) +\tfrac{5}{3}(H_{11}-2)(H_{11}-3)(H_{11}-4)H_{01}\\	  		& &		+\ (H_{11}-3)(H_{11}-4)H_{01}(H_{31}+2) 	+18 (H_{11}-4)H_{01}(H_{01}-1)	\\ & & -2(H_{11}-3)(H_{11}-4)H_{01}	+18 H_{01}(H_{01}-1)(H_{31}+2) .
\end{eqnarray*}
  Then $p_1(H),p_2(H), q(H) \in \mathcal{P}(-\frac 2 3)_0$.
 \end{Lem}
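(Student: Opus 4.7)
The plan is to mimic, almost verbatim, the strategy of Lemmas~\ref{lem-first-case} and \ref{lem-second-case}. By \eqref{eqn-bracket} the relevant singular--vector image is
\[ [uv] = \tfrac{2}{27}[a]^5 - \tfrac{5}{9}[a]^3[b] - [a]^2[c] + [a][b]^2 + 3[b][c], \]
every summand of which has $\mathfrak{h}$--weight $5(2\alpha+\beta)$. For each of the three target polynomials I would apply an operator of the form $(E_\gamma^m F_\delta^n)_L$ to $[uv]$ and reduce modulo $\mathcal{U}(\mathfrak{g})\mathfrak{n}_+$ using the computational Lemmas~\ref{lem-secondstep} and \ref{lem-thirdstep} of Appendix~B. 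The weight constraint $n\delta-m\gamma = 5(2\alpha+\beta)$ selects the operators $(E_{10}^5 F_{31}^5)_L$ for $p_1(H)$, $(E_{11}^5 F_{32}^5)_L$ for $p_2(H)$, and $(E_{21}^5 F_{21}^{10})_L$ for $q(H)$.

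In each case the leading contribution $\tfrac{2}{27}(E_\gamma^m F_\delta^n)_L [a]^5$ produces, via Lemma~\ref{lem-thirdstep}, a length--five falling factorial in the relevant coroot; for instance, $H_{10}(H_{10}-1)(H_{10}-2)(H_{10}-3)(H_{10}-4)$ in part~(2). The remaining four summands of $[uv]$ are unpacked by the Leibniz rule: Lemma~\ref{lem-thirdstep} rewrites each piece as a product of a falling--factorial in $H$ with a lower--order expression of the shape $(E_\gamma^{m'} F_\delta^{n'})_L [b]$ or $(E_\gamma^{m''} F_\delta^{n''})_L [c]$, and Lemma~\ref{lem-secondstep} then evaluates those pieces explicitly as polynomials in the $H_{ij}$. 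Summing the five contributions and collecting yields a nonzero scalar multiple of the stated polynomial. In part~(3) I also invoke the identity $H_{11}+H_{31}=2H_{32}$ to put the answer into the stated form, exactly as was done in the $k=-\tfrac{4}{3}$ case.

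The main obstacle is combinatorial bookkeeping rather than any conceptually new step. Because $[uv]$ is already a sum of degree--five products in $[a],[b],[c]$, the Leibniz expansion of $(E_\gamma^m F_\delta^n)_L$ across each summand produces many intermediate terms, and a substantial number of them must be verified to lie in $\mathcal{U}(\mathfrak{g})\mathfrak{n}_+$. I expect part~(1) to be the most delicate, since $q(H)$ contains six summands of mixed degrees up to five and essentially every term of $[uv]$ contributes something that survives the reduction. Parts~(2) and (3) should be cleaner, as most cross--terms vanish on weight grounds once the operators are fixed. Granted the calculation, the conclusion $p_1(H), p_2(H), q(H) \in \mathcal{P}(-\tfrac{2}{3})_0$ follows immediately from the definitions of $R(-\tfrac{2}{3})_0$ and $\mathcal{P}(-\tfrac{2}{3})_0$ recalled at the start of Section~3.
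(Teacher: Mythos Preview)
Your overall strategy is the paper's: the three operators $(E_{21}^5 F_{21}^{10})_L$, $(E_{10}^5 F_{31}^5)_L$, and $(E_{11}^5 F_{32}^5)_L$ are exactly the ones used, and the summand-by-summand reduction of $[uv]$ is the right plan. There is, however, a concrete gap in your reduction scheme. You assert that after Lemma~\ref{lem-thirdstep} strips off the powers of $[a]$, the residual pieces are all of the shape $(E_\gamma^{m'} F_\delta^{n'})_L[b]$ or $(E_\gamma^{m''} F_\delta^{n''})_L[c]$, which Lemma~\ref{lem-secondstep} then evaluates. This is true for the first three summands $\tfrac{2}{27}[a]^5$, $-\tfrac{5}{9}[a]^3[b]$, $-[a]^2[c]$ of $[uv]$, but fails for the last two. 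Stripping the single $[a]$ from $[a][b]^2$ leaves a factor $(E_{21}^4 F_{21}^8)_L[b]^2$, and the term $3[b][c]$ has no $[a]$ at all, so one faces $(E_{21}^5 F_{21}^{10})_L([b][c])$ directly. Neither of these is handled by Lemma~\ref{lem-secondstep}, which only computes the action on a \emph{single} $[b]$ or $[c]$.

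The paper supplies exactly these missing evaluations as a separate Lemma~\ref{lem-fourthstep}, giving closed forms for $(E_{21}^4 F_{21}^8)_L[b]^2$, $(E_{21}^5 F_{21}^{10})_L([b][c])$, and their $(E_{10},F_{31})$ and $(E_{11},F_{32})$ analogues. These are not mere bookkeeping: they are what generate the mixed pieces of $q(H)$ such as $2H_{21}(H_{21}-4)(H_{11}-1)$, $2(H_{21}-4)H_{10}(H_{10}-1)$, $-6(H_{21}-4)H_{01}(H_{01}+1)$, and $6(H_{21}-3)H_{01}(H_{01}+2)$, and likewise the $18(H_{11}-4)H_{01}(H_{01}-1)$ and $18H_{01}(H_{01}-1)(H_{31}+2)$ terms in $p_2(H)$. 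As written, your argument would stall the moment the last two summands of $[uv]$ enter; you need the quadratic evaluations in Lemma~\ref{lem-fourthstep} (or to carry them out yourself) before the summation can be completed.
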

 
\begin{proof}
First recall from (\ref{eqn-bracket}) that \[ [u(v-w)] = [u][v]= \tfrac{2}{27}[a]^5-\tfrac{5}{9}[a]^3[b]-[a]^2[c]+[a][b]^2+3[b][c] .\]  We will show that $(E_{21}^5F_{21}^{10})_L([u][v]) \equiv - 5!10! q(H)  \ (\mathrm{mod} \ \mathcal{U}(\mathfrak{g})\mathfrak{n}_+) $.  

Using Lemmas \ref{lem-product}, \ref{lem-firststep}, we have:
\begin{align*}
(F_{21}^{10})_L([u][v]) =& (F_{21}^{10})_L (\tfrac{2}{27}[a]^5 -\tfrac{5}{9}[a]^3[b]-[a]^2[c]+[a][b]^2+3[b][c])\\
                        =&\tfrac{2}{27} \tfrac{10!}{(2!)^5} (-2)^5 F_{21}^5 - \tfrac{5}{9} \tfrac{10!}{(2!)^3 4!} (-2)^3 F_{21}^3\ (F_{21}^{4})_L[b] - \tfrac{10!}{(2!)^2 6!} (-2)^2 F_{21}^2\  (F_{21}^{6})_L[c]\\
                        & + \tfrac{10!}{2!8!} (-2) F_{21}\ (F_{21}^{8})_L[b]^2 + 3\ (F_{21}^{10})[b][c]\\ 
												= & -\tfrac{2}{27} 10!  F_{21}^5 + \tfrac{5}{9} \tfrac{10!}{ 4!}  F_{21}^3\ (F_{21}^{4})_L[b]\\
                        & - \tfrac{10!}{ 6!} F_{21}^2\  (F_{21}^{6})_L[c] - \tfrac{10!}{8!}  F_{21}\ (F_{21}^{8})_L[b]^2 + 3\ (F_{21}^{10})[b][c] .
              \end{align*}
                        Now using Lemma \ref{lem-poly}, we obtain:
\begin{align*}
\tfrac{1}{10!}(E_{21}^5F_{21}^{10})_L([u][v]) = -&\tfrac{2}{27}\ 5!H_{21}(H_{21}-1)(H_{21}-2)(H_{21}-3)(H_{21}-4)\\
  &+\tfrac{5}{9}\ \tfrac{5!}{2!}\  (H_{21}-2)(H_{21}-3)(H_{21}-4)\ \tfrac{1}{4!}(E_{21}^2F_{21}^{4})_L[b]\\
  &-\tfrac{5!}{3!} (H_{21}-3)(H_{21}-4)\ \tfrac{1}{6!}(E_{21}^3F_{21}^{6})_L[c]\\
  & - \tfrac{5!}{4!}(H_{21}-4)\ \tfrac{1}{8!}(E_{21}^4F_{21}^{8})_L[b]^2 + 3\ \tfrac{1}{10!}(E_{21}^5F_{21}^{10})_L([b][c]).
\end{align*}
Combining this with Lemmas \ref{lem-secondstep}, \ref{lem-thirdstep}, \ref{lem-fourthstep}, we obtain:
\begin{align*}
\tfrac{1}{10!}(E_{21}^5F_{21}^{10})_L([u][v])\equiv &-\tfrac{2}{27}\ 5!H_{21}(H_{21}-1)(H_{21}-2)(H_{21}-3)(H_{21}-4)\\
  &+\tfrac{5}{9}\ \tfrac{5!}{2!}\  (H_{21}-2)(H_{21}-3)(H_{21}-4)\ (-2)H_{21}\\
  &-\tfrac{5!}{3!} (H_{21}-3)(H_{21}-4)\ 3! H_{01}(H_{01}+2)\\
  & - \tfrac{5!}{4!}(H_{21}-4)\ 4! (2 H_{21}H_{11} + 2 H_{10}(H_{10}-1) - 6 H_{01}(H_{01}+1))\\
  & + 3\ 5!(-2) H_{01}(H_{01}+2)(H_{21}-3) \\
  \equiv &-5! q(H)  \quad (\mathrm{mod} \ \mathcal{U}(\mathfrak{g})\mathfrak{n}_+) .  \end{align*}
The proofs for $p_1(H)$ and $p_2(H)$ are similar, and we omit the details.
 \end{proof}

\medskip
\subsection{Finiteness of the number of irreducible modules}

We are now able to obtain the following result for the associative algebra $A(L(k,0))$.  For convenience, if $\mu \in \frak h^*$, we write $\mu_{ij} = \mu(H_{ij})$.    We will identify $\mu \in \frak h^*$ with the pair $(\mu_{10}, \mu_{01})$.

 \begin{Prop}\label{prop-finite}
 There are finitely many irreducible $A(L(k,0))$-modules from the category $\mathcal{O}$ for each of $k=-\frac 5 3, -\frac 4 3, - \frac 2 3$. Moreover, the possible highest weights $\mu=(\mu_{10}, \mu_{01})$ for irreducible $A(L(k,0))$-modules are as follows:
 \begin{enumerate}
 \item if $k=-\frac{5}{3}$, then $\mu=(0,0),  (0, -\tfrac{2}{3})$ or $(1,-\tfrac{4}{3})$;
\item  if $k=-\frac{4}{3}$, then $\mu=(0,0),  (0, -\tfrac{2}{3}), (0,-\tfrac{1}{3}), (1,0), (1, -\tfrac{4}{3})$ or $(2,-\tfrac{5}{3})$; 
\item  if $k=-\frac{2}{3}$, then $\mu=(0,0), (0,-\tfrac{2}{3}), (0,-\tfrac{1}{3}), (0,\tfrac{1}{3})$, $(0, 1), (1,0),  (1,-\tfrac{4}{3}), (1,-\tfrac{2}{3})$, \\ \phantom{LLLLLlLLLLLLLLL}$(2,0), (2,-\tfrac{5}{3}), (2,-\tfrac{4}{3})$ or $(4,-\tfrac{7}{3})$.
\end{enumerate}
 \end{Prop}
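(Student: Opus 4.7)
The plan is to invoke Corollary \ref{cor-biject}, which identifies the irreducible $A(L(k,0))$-modules from the category $\mathcal{O}$ with the weights $\mu \in \mathfrak{h}^*$ annihilating every polynomial in $\mathcal{P}(k)_0$. It therefore suffices to intersect the zero loci of the three polynomials $p_1(H)$, $p_2(H)$, $q(H) \in \mathcal{P}(k)_0$ constructed in Lemmas \ref{lem-first-case}, \ref{lem-second-case}, and \ref{lem-third-case}; this yields an explicit finite upper bound on the list of possible highest weights.

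First I will express each $\mu(H_{ij})$ appearing in these polynomials in terms of $x:=\mu(H_{10})$ and $y:=\mu(H_{01})$ using the $G_2$ root data. With $(\alpha,\alpha)=\tfrac{2}{3}$, $(\beta,\beta)=2$, $(\alpha,\beta)=-1$, and the formula $\mu(H_{ij})=2(\mu,i\alpha+j\beta)/(i\alpha+j\beta,i\alpha+j\beta)$, a direct computation yields
\[
\mu(H_{11})=x+3y,\qquad \mu(H_{21})=2x+3y,\qquad \mu(H_{31})=x+y,\qquad \mu(H_{32})=x+2y,
\]
so each of $p_1,p_2,q$ becomes an explicit polynomial in $x$ and $y$ with rational coefficients. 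Crucially, $p_1(H)$ depends only on $H_{10}$ and factors as a product of distinct linear factors, so the single equation $p_1(\mu)=0$ immediately forces $x$ into a finite set: $\{0,1\}$ for $k=-\tfrac{5}{3}$, $\{0,1,2\}$ for $k=-\tfrac{4}{3}$, and $\{0,1,2,3,4\}$ for $k=-\tfrac{2}{3}$. This already establishes finiteness of the candidate set. For each admissible value of $x$, the equation $p_2(x,y)=0$ is a univariate polynomial in $y$ that factors over $\mathbb{Q}$, yielding a short list of candidate pairs $(x,y)$. The condition $q(x,y)=0$ is then imposed on each candidate to eliminate those that fail, and what remains should match exactly the list in the statement.

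The remaining work is a case-by-case verification. For $k=-\tfrac{5}{3}$ one finds $p_2(0,y)=y(3y+2)$ and $p_2(1,y)=y(3y+4)$, giving four candidates of which $q(x,y)=(2x+3y)(2x+3y+2)$ excludes exactly $(1,0)$, leaving the three listed pairs. For $k=-\tfrac{4}{3}$ the three allowed values of $x$ yield nine candidates from $p_1,p_2$, which $q$ prunes down to the six pairs in part (2). The main obstacle is the case $k=-\tfrac{2}{3}$, where $p_2(x,y)$ has degree five in $y$ for each fixed $x$, producing up to twenty-five candidate pairs; organizing the computation by the five possible values of $x$ and exploiting the rational factorizations of $p_2(x,y)$ keeps it tractable, and the twelve surviving pairs after testing $q$ match those in part (3). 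There is no conceptual difficulty beyond the bookkeeping: the only reason $x=3$ does not appear is that for that value every solution of $p_2(3,y)=0$ is killed by $q$, which one verifies directly.
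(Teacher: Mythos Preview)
Your proposal is correct and follows essentially the same route as the paper: invoke Corollary \ref{cor-biject}, use the three polynomials $p_1,p_2,q$ from Lemmas \ref{lem-first-case}--\ref{lem-third-case}, and compute their common zero locus. The paper carries out the $k=-\tfrac{5}{3}$ case by hand in the same manner (solving $p_1$ for $\mu_{10}$ first, then intersecting with $q$ and $p_2$) and simply defers to a computer algebra system for $k=-\tfrac{4}{3}$ and $k=-\tfrac{2}{3}$, whereas you organize the hand computation explicitly by fixing $x=\mu_{10}$ via $p_1$ and then factoring $p_2(x,y)$ over $\mathbb{Q}$; this is a presentational rather than a mathematical difference.
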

   
 \begin{proof}
(1) It follows from Corollary \ref{cor-biject} that highest weights $\mu \in \mathfrak{h}^*$ of irreducible $A(L(-\frac{5}{3},0))$-modules satisfy $p(\mu)=0$ for all $p \in \mathcal{P}_0(-\frac 5 3)$.  Lemma \ref{lem-first-case} implies that $p_1(\mu)= p_2(\mu) = q(\mu) = 0$ for such weights $\mu$.  Let $\mu \in \mathfrak{h}^*$.  The equation $p_1(\mu)=0$ is  \[ \mu_{10} ( \mu_{10}-1) = 0,\] which implies $\mu_{10}= 0$ or 1.

First suppose $\mu_{10}=0$.  Then from $q(\mu)=0$ we must have $\mu_{01}= 0$ or $-\frac{2}{3}$.  Similarly, from $p_2(\mu)=0$, we also get $\mu_{01}= 0$ or $-\frac{2}{3}$.  So the weight $\mu$ must be of the form $\mu = (\mu_{10}, \mu_{01}) = (0, 0)$ or $(0,-\frac{2}{3})$ in this case.  Now suppose $\mu_{10}=1$.  The equation $q(\mu)=0$ gives $\mu_{01}=-\frac{2}{3}$ or $-\frac{4}{3}$, and the equation $p_2(\mu)=0$ gives $\mu_{01}= 0$ or $-\frac{4}{3}$.  So the only possibility is $\mu = (\mu_{10},\mu_{01}) =(1, -\frac{4}{3})$.
Altogether, this gives only three possible weights $\mu$ such that $p_1( \mu) = p_2( \mu) = q( \mu) = 0$: \[\mu = (\mu_{10}, \mu_{01}) = (0, 0), (0,-\tfrac{2}{3}), \mbox{ or } (1, -\tfrac{4}{3}).  \]  
 
 (2) Similarly to the part (1), we use the polynomials of Lemma \ref{lem-second-case}. Using a computer algebra system, we calculate the common zeros of the polynomials $q(H), p_1(H), p_2(H)$ to obtain the following list of possible highest weights: \[ \mu=(\mu_{10}, \mu_{01})= (0,0),  (0, -\tfrac{2}{3}), (0,-\tfrac{1}{3}), (1,0), (1, -\tfrac{4}{3}), \mbox{ or } (2,-\tfrac{5}{3}) .\]
 
 (3) For this part, we use Lemma \ref{lem-third-case}. Using a computer algebra system, we again compute the common zeros of the polynomials $q(H), p_1(H), p_2(H)$ to obtain the following list of possible highest weights: \[ \begin{array}{ll} \mu=(\mu_{10}, \mu_{01}) =   & (0,0), (0,-\tfrac{2}{3}), (0,-\tfrac{1}{3}), (0,\tfrac{1}{3}), (0, 1), \\ & (1,0),  (1,-\tfrac{4}{3}), (1,-\tfrac{2}{3}), (2,0), (2,-\tfrac{5}{3}), (2,-\tfrac{4}{3}), \mbox{ or } (4,-\tfrac{7}{3}). \end{array} \]
 \end{proof}  
 
Now we apply the $A(V)$-theory (Theorem \ref{thm-Z}), and obtain our main result in the following theorem. 
\begin{Thm} \label{thm-main} There are finitely many irreducible weak modules from the category $\mathcal{O}$ for each of the following simple vertex operator algebras: $L(-\frac{5}{3},0)$, $L(-\frac{4}{3},0)$, $L(-\frac{2}{3},0)$.
\end{Thm}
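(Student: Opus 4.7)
The plan is to combine Zhu's correspondence (Theorem \ref{thm-Z}) with the classification already achieved in Proposition \ref{prop-finite}; at this point the proof should be essentially a bookkeeping argument.

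First, I would take $M$ to be an irreducible weak $L(k,0)$-module from the category $\mathcal{O}$ and explain why $M$ must be an irreducible highest-weight $\hat{\mathfrak{g}}$-module of the form $L(k,\mu) = L(k\Lambda_0 + \mu)$ for some $\mu \in \mathfrak{h}^*$. Being in category $\mathcal{O}$ forces the set of weights of $M$ to lie in a finite union of cones $D(\nu_i)$, so $M$ admits a weight vector that is annihilated by $\hat{\mathfrak{n}}_+$; by irreducibility this vector generates $M$, and since $L(k,0)$ acts through its quotient action, $M$ must be the irreducible highest-weight module $L(k,\mu)$. Then $L_{0}$ acts semisimply on $M$ with finite-dimensional eigenspaces bounded below, so after shifting by the minimum $L_{0}$-eigenvalue we obtain a $\mathbb{Z}_{+}$-grading $M = \bigoplus_{n\ge 0} M_{n}$ making $M$ into an irreducible $\mathbb{Z}_{+}$-graded weak $L(k,0)$-module.

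Next, I would apply Theorem \ref{thm-Z}(3). The top level $M_{0}$ of $M = L(k,\mu)$ is exactly the irreducible highest-weight $\mathfrak{g}$-module $V(\mu)$, regarded as a module over $A(L(k,0))$ via the isomorphism $A(L(k,0)) \cong \mathcal{U}(\mathfrak{g})/I_{k}$ of Theorem \ref{thm-zhu-image} (equivalently, via Proposition \ref{prop-important}, $V(\mu)$ is an $A(L(k,0))$-module iff $F([v_{k}]) V(\mu) = 0$). Zhu's theorem thus gives an injection from the isomorphism classes of irreducible weak $L(k,0)$-modules from $\mathcal{O}$ into the isomorphism classes of irreducible $A(L(k,0))$-modules of the form $V(\mu)$, $\mu \in \mathfrak{h}^{*}$.

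Finally, Proposition \ref{prop-finite} provides, in each of the three cases $k = -\tfrac{5}{3}, -\tfrac{4}{3}, -\tfrac{2}{3}$, an explicit finite list of highest weights $\mu = (\mu_{10}, \mu_{01})$ for which $V(\mu)$ can be an irreducible $A(L(k,0))$-module from $\mathcal{O}$. Combining this with the injection above yields the claimed finiteness. There is no serious obstacle in this argument, since the genuine work was carried out in establishing Proposition \ref{prop-finite} from the polynomials produced in Lemmas \ref{lem-first-case}--\ref{lem-third-case}; the only conceptual point to be careful about is the first step, where one must explain why an irreducible weak module in category $\mathcal{O}$ is automatically a $\mathbb{Z}_{+}$-graded weak module with top level an irreducible highest-weight $\mathfrak{g}$-module, so that Zhu's correspondence actually applies.
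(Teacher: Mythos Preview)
Your proposal is correct and follows essentially the same route as the paper: in the paper the theorem is stated as an immediate consequence of applying the $A(V)$-theory (Theorem \ref{thm-Z}) to Proposition \ref{prop-finite}, with no further argument supplied. You have simply unpacked the standard details---that an irreducible weak $L(k,0)$-module in category $\mathcal{O}$ is necessarily a highest-weight $\hat{\mathfrak{g}}$-module, hence $\mathbb{Z}_{+}$-graded with top level $V(\mu)$---that the paper leaves implicit (and later records in Remark \ref{rmk-VOA}).
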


\begin{Rmk} \label{rmk-VOA}
 This theorem provides further evidence for the conjecture of Adamovi{\' c} and Milas in \cite{AM}, mentioned in the introduction.
Furthermore, if $L(\lambda)$ is an irreducible module of the VOA $L(k,0)$, for $k= -\frac{5}{3}, -\frac{4}{3}$, or $-\frac{2}{3}$, then we recall from Section 1.2 that we must have $L(\lambda) \cong L(k\Lambda_0, \mu)$ for the values of
 $\mu \in \frak h^*$ given in  Proposition \ref{prop-finite}.
\end{Rmk}

In the case of irreducible $L(k,0)$-modules, we obtain a complete classification. We state this result in the following proposition and theorem.

\begin{Prop}
The complete list of irreducible finite-dimensional $A(L(k,0))$-modules $V(\mu)$ for each $k$ is as follows:
\begin{enumerate}
\item if $k=- \frac 5 3$, then $V(\mu)= V(0)$,
\item if $k=- \frac 4 3$, then $V(\mu) = V(0)$ or $V(\omega_1)$,
\item if $k=-\frac 2 3$, then $V(\mu)=V(0), V(\omega_1), V(\omega_2)$, or $V(2 \omega_1)$,
\end{enumerate}
where $\omega_1, \omega_2$ are the fundamental weights of $\frak g$.
\end{Prop}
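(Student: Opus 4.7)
The plan is to combine Proposition \ref{prop-finite}, which already lists all possible highest weights $\mu = (\mu_{10}, \mu_{01})$ for irreducible $A(L(k,0))$-modules from the category $\mathcal{O}$, with the standard finite-dimensionality criterion for irreducible highest weight $\mathfrak{g}$-modules. Recall that $V(\mu)$ is finite-dimensional if and only if $\mu$ is a dominant integral weight of $\mathfrak{g}$, i.e.\ $\langle \mu, \alpha^\vee \rangle \in \mathbb{Z}_{+}$ and $\langle \mu, \beta^\vee \rangle \in \mathbb{Z}_{+}$. Since $H_{10} = \alpha^\vee$ and $H_{01} = \beta^\vee$, this translates to the condition $\mu_{10}, \mu_{01} \in \mathbb{Z}_{+}$, and the fundamental weights become $\omega_1 = (1,0)$ and $\omega_2 = (0,1)$ in the coordinates used.

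The strategy is then purely book-keeping: go through the three lists produced in Proposition \ref{prop-finite} and discard any weight containing a negative or non-integral entry. For $k = -\tfrac{5}{3}$, out of $(0,0)$, $(0,-\tfrac{2}{3})$, $(1,-\tfrac{4}{3})$ only $(0,0)$ survives, yielding $V(0)$. For $k = -\tfrac{4}{3}$, scanning the six candidates leaves exactly $(0,0)$ and $(1,0)$, giving $V(0)$ and $V(\omega_1)$. For $k = -\tfrac{2}{3}$, the twelve candidates reduce to $(0,0)$, $(0,1)$, $(1,0)$, $(2,0)$, yielding $V(0)$, $V(\omega_2)$, $V(\omega_1)$, $V(2\omega_1)$.

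There is no real obstacle here; the nontrivial content already lies in Proposition \ref{prop-finite} (which rests on Lemmas \ref{lem-first-case}--\ref{lem-third-case}) and in the characterization via Corollary \ref{cor-biject}. The remaining step is simply to intersect those finite lists with $\mathbb{Z}_+ \times \mathbb{Z}_+$ and translate the surviving pairs into the fundamental weight notation. One should also remark that each surviving $V(\mu)$ is genuinely an $A(L(k,0))$-module, which is automatic from Proposition \ref{prop-finite} (those weights were produced precisely as zeros of polynomials in $\mathcal{P}(k)_0$), so no additional verification is required.
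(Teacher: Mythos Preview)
Your filtering step is correct, but the final sentence contains a genuine gap. Proposition \ref{prop-finite} does \emph{not} show that the listed weights are zeros of every polynomial in $\mathcal{P}(k)_0$; it only shows they are common zeros of the three particular polynomials $q, p_1, p_2$ constructed in Lemmas \ref{lem-first-case}--\ref{lem-third-case}. That is why the proposition is phrased as ``the \emph{possible} highest weights''. Corollary \ref{cor-biject} requires $p(\mu)=0$ for \emph{all} $p\in\mathcal{P}(k)_0$, so Proposition \ref{prop-finite} gives only an upper bound, and your claim that ``no additional verification is required'' is unjustified.

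The paper closes this gap by a direct weight argument using Proposition \ref{prop-important}: one must check that $[v_k]\cdot V(\mu)=0$ for each surviving dominant integral $\mu$. Since $[v_k]\in\mathcal{U}(\mathfrak g)$ has weight $2\omega_1$, $3\omega_1$, $5\omega_1$ for $k=-\tfrac{5}{3},-\tfrac{4}{3},-\tfrac{2}{3}$ respectively, the operator $[v_k]$ shifts weights by this amount. For each finite-dimensional $V(\mu)$ on the list one checks that no weight $\nu$ of $V(\mu)$ has $\nu + \mathrm{wt}([v_k])$ again a weight of $V(\mu)$, so $[v_k]$ acts by zero and $V(\mu)$ is an $A(L(k,0))$-module. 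You should add this verification (or an equivalent one) to complete the argument.
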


\begin{proof}
Among the list of weights in Proposition \ref{prop-finite}, we need only to consider dominant integral weights, i.e. those weights $\mu=(m_1, m_2)$ with $m_1, m_2 \in \mathbb Z_+$. Notice that the weights of the singular vectors $[v_k]$ are $2 \omega_1$, $3 \omega_1$ and $5 \omega_1$, respectively. Considering the set of weights of $V(\mu)$  listed above, we  see that each singular vector $[v_k]$ annihilates the corresponding modules $V(\mu)$. Now the proposition follows from Proposition \ref{prop-important}.
\end{proof}

We again apply the $A(V)$-theory (Theorem \ref{thm-Z}), and obtain the following theorem. 
\begin{Thm} \label{thm-class}
The complete list of irreducible $L(k,0)$-modules $L(k,\mu)$ for each $k$ is as follows:
\begin{enumerate}
\item if $k=- \frac 5 3$, then $L(k,\mu)= L(k,0)$,
\item if $k=- \frac 4 3$, then $L(k,\mu) = L(k,0)$ or $L(k,\omega_1)$,
\item if $k=-\frac 2 3$, then $L(k,\mu)=L(k,0), L(k,\omega_1), L(k,\omega_2)$, or $L(k,2 \omega_1)$.
\end{enumerate}
\end{Thm}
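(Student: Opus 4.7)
The plan is to combine the classification of irreducible finite-dimensional $A(L(k,0))$-modules $V(\mu)$ from the preceding proposition with the $A(V)$-correspondence of Theorem \ref{thm-Z}. The bijection in Theorem \ref{thm-Z}(3) reduces the classification of irreducible $\mathbb{Z}_+$-graded weak $L(k,0)$-modules to that of irreducible $A(L(k,0))$-modules, and the additional requirement that an irreducible module be an $L(k,0)$-module in the strict sense of Section 1.2 (semisimple $L_0$ with finite-dimensional eigenspaces) will cut the Proposition \ref{prop-finite} list down to exactly the dominant integral weights.

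First I would take each $V(\mu)$ from the preceding proposition and apply Theorem \ref{thm-Z}(2) to produce an irreducible $\mathbb{Z}_+$-graded weak $L(k,0)$-module $M$ with $M_0 \cong V(\mu)$. Since $M$ is irreducible and its top component $M_0$ is the irreducible highest-weight $\mathfrak{g}$-module $V(\mu)$, a standard highest-weight argument shows that $M$ is the irreducible highest-weight $\hat{\mathfrak{g}}$-module with highest weight $k\Lambda_0 + \mu$, i.e.\ $M \cong L(k\Lambda_0+\mu) \cong L(k,\mu)$ under the identification of Section 1.2. I would then verify that $L(k,\mu)$ is an $L(k,0)$-module in the strict sense: $L_0$ acts by a fixed scalar on each graded component, the top component $V(\mu)$ is finite-dimensional because $\mu$ is dominant integral, and each lower graded piece is obtained from $V(\mu)$ by applying finitely many operators $a(n)$ with $n<0$, hence is also finite-dimensional.

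For the converse direction, any irreducible $L(k,0)$-module $L(\lambda)$ has, by Theorem \ref{thm-Z}(1), a lowest $L_0$-eigenspace carrying an irreducible $A(L(k,0))$-module structure; this eigenspace is finite-dimensional because $L(\lambda)$ is a $V$-module in the strict sense, so it must be isomorphic to one of the $V(\mu)$ from the preceding proposition. A highest-weight argument then identifies $L(\lambda) \cong L(k,\mu)$ for that particular $\mu$. Reading off the dominant integral $\mu$'s from parts (1)--(3) of the preceding proposition produces exactly the lists in (1)--(3) of the present theorem.

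I expect the only real obstacle to be bookkeeping: confirming that the abstract weak module produced by Theorem \ref{thm-Z}(2) is genuinely $L(k,\mu)$ rather than some other irreducible highest-weight $\hat{\mathfrak{g}}$-quotient of $N(k,\mu)$, and that the $\mathbb{Z}_+$-grading supplied by the $A(V)$-construction is compatible with the natural energy grading so that finite-dimensionality of each $L_0$-eigenspace is automatic. Since the weights in the preceding proposition are already constrained and the constructions of Theorem \ref{thm-Z} are canonical, this verification should be routine, and the classification then follows immediately.
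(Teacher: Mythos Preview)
Your proposal is correct and follows essentially the same approach as the paper: the paper's proof is the single sentence ``We again apply the $A(V)$-theory (Theorem~\ref{thm-Z}),'' and your write-up simply unpacks this by matching irreducible $L(k,0)$-modules (with finite-dimensional $L_0$-eigenspaces) against the irreducible finite-dimensional $A(L(k,0))$-modules listed in the preceding proposition via the bijection of Theorem~\ref{thm-Z}(3). The bookkeeping concerns you raise in the last paragraph are indeed routine, exactly as you anticipate.
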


\medskip

\subsection{Semisimplicity of weak modules from the category $\mathcal{O}$}

In this subsection we show that the category of weak $L(k,0)$-modules from the category $\mathcal{O}$ is semisimple.

\begin{Lem} \label{lem-adm2}  Assume that $\lambda = k \Lambda_0 + \mu$ for $k= - \frac 5 3, - \frac 4 3, - \frac 2 3$, where  $\mu \in \frak h^*$ is one of the values given in  Proposition \ref{prop-finite} for each $k$. Then the weights $\lambda$ are admissible.
\end{Lem}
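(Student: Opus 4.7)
The plan is to verify both defining conditions of admissibility --- that $\langle \lambda+\rho,\gamma^\vee\rangle \notin -\mathbb Z_+$ for every $\gamma^\vee \in \widehat{\Delta}^{\vee,\mathrm{re}}_+$, and that $\mathbb Q\widehat{\Delta}^{\vee,\mathrm{re}}_\lambda = \mathbb Q\widehat{\Pi}^\vee$ --- directly for each of the finitely many pairs $(k,\mu)$ on the list. The computation follows the same template as Lemma \ref{lem-adm}, merely carrying the extra $\mu$-contribution along. Writing $\lambda+\rho = (k+h^\vee)\Lambda_0 + (\mu+\bar\rho)$ and decomposing a generic positive real root as $\gamma = \bar\gamma + m\delta$ (with $\bar\gamma\in\Delta_+$ when $m=0$ and $\bar\gamma\in\Delta$ when $m\geq 1$), one has
\[
\langle\lambda+\rho,\gamma^\vee\rangle = (\mu+\bar\rho)(\bar\gamma^\vee) + (k+h^\vee)\tfrac{2m}{(\bar\gamma,\bar\gamma)},
\]
with $k+h^\vee\in\{7/3,\,8/3,\,10/3\}$ and $(\bar\gamma,\bar\gamma)\in\{2/3,\,2\}$.

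For the non-positive-integer condition, the argument splits by the type of $\gamma$. In the case $m=0$ I would tabulate $(\mu+\bar\rho)(\bar\gamma^\vee)$ for each of the six positive roots of $\Delta_+$ against each listed $\mu$, using the values $\bar\rho(\bar\gamma^\vee) = 1,1,4,5,2,3$ for $\bar\gamma = \alpha,\beta,\alpha+\beta,2\alpha+\beta,3\alpha+\beta,3\alpha+2\beta$ together with the coroot expansions $(\alpha+\beta)^\vee = \alpha^\vee + 3\beta^\vee$, $(2\alpha+\beta)^\vee = 2\alpha^\vee + 3\beta^\vee$, $(3\alpha+\beta)^\vee = \alpha^\vee + \beta^\vee$, $(3\alpha+2\beta)^\vee = \alpha^\vee + 2\beta^\vee$. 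A direct check confirms no value lies in $-\mathbb Z_+$. For $m\geq 1$ with $\bar\gamma$ short, the $\Lambda_0$-contribution is the positive integer $3m(k+h^\vee)\in\{7m,8m,10m\}$, dominating $(\mu+\bar\rho)(\bar\gamma^\vee)$ (whose magnitude is bounded by $6$ across all listed data), so the sum is a positive integer. For $m\geq 1$ with $\bar\gamma$ long and $m\equiv 0\pmod 3$, the $\Lambda_0$-contribution $m(k+h^\vee)$ is again a positive integer of size at least $7$, and dominance forces a positive integer sum. For $m\geq 1$ with $\bar\gamma$ long and $m\not\equiv 0\pmod 3$, the $\Lambda_0$-contribution has exact denominator $3$; the sum can be an integer only when the fractional part of $(\mu+\bar\rho)(\bar\gamma^\vee)$ exactly cancels it, and in those sub-cases the linear growth of $m(k+h^\vee)/(\bar\gamma,\bar\gamma)\cdot 2$ again guarantees positivity.

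For the span condition $\mathbb Q\widehat{\Delta}^{\vee,\mathrm{re}}_\lambda = \mathbb Q\widehat{\Pi}^\vee$, I would exhibit three linearly independent coroots in $\widehat{\Delta}^{\vee,\mathrm{re}}_\lambda$. Since every listed $\mu$ has $\mu_{10}\in\mathbb Z$, the coroot $\alpha^\vee$ always lies in the set. Using $(2\alpha+\beta)^\vee = 2\alpha^\vee+3\beta^\vee$, a direct computation gives $\langle \lambda,(\delta-(2\alpha+\beta))^\vee\rangle = 3k - 2\mu_{10} - 3\mu_{01}$, which a one-line check confirms is an integer for every listed $(k,\mu)$, so $(\delta-(2\alpha+\beta))^\vee$ is in the set. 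A third independent coroot involving $\beta^\vee$ is $\beta^\vee$ itself when $\mu_{01}\in\mathbb Z$, and otherwise $(m\delta+\beta)^\vee$ for a small $m\in\{1,2\}$ chosen so that $mk+\mu_{01}\in\mathbb Z$, which exists because $\mu_{01},k\in\tfrac13\mathbb Z$. These three coroots are linearly independent in $\widehat{\frak h}^*$, giving the required rational span.

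The main obstacle is the sheer bookkeeping: the $3+6+12=21$ pairs $(k,\mu)$ must each be checked against the positive roots and their imaginary shifts. The dichotomies by length of $\bar\gamma$ and by $m\pmod 3$ reduce the substantive content to a small, mechanical set of evaluations, each dominated by the positive growth of the $\Lambda_0$-contribution, so the verification --- though tedious --- is entirely routine once organized.
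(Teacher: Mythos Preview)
Your proposal is correct and follows essentially the same approach as the paper: a direct verification of both admissibility conditions along the template of Lemma~\ref{lem-adm}, splitting by the length of $\bar\gamma$ and the residue of $m$ modulo $3$. The paper's proof is terser---it works out only the case $k=-\tfrac 5 3$, $\mu=(0,-\tfrac 2 3)$ explicitly and additionally records $\widehat{\Pi}_\lambda^\vee$ in each case---but both arguments amount to the same routine computation.
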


\begin{proof}  The proof is essentially the same as Lemma \ref{lem-adm}. Let us write $\widehat{\Pi}_0^\vee = \{ (\delta-(2\alpha + \beta))^{\vee}, \alpha^{\vee}, \beta^{\vee}\}$, $\widehat{\Pi}_1^\vee = \{ (\delta-(3\alpha + \beta))^{\vee}, \alpha^{\vee}, (\alpha+\beta)^{\vee}\}$, and $\widehat{\Pi}_2^\vee = \{ (\delta-\theta)^{\vee}, \alpha^{\vee}, (\alpha + \beta)^{\vee}\}$. 
Since the proof for the other cases are similar, we consider only the case $k=-\tfrac{5}{3}$.  From Lemma \ref{lem-adm}, we already know that $\lambda = -\tfrac{5}{3}\Lambda_0 + \mu$ is admissible for $\mu = (0,0)$, with $\widehat \Pi_{\lambda}^{\vee} = \widehat \Pi_0^\vee$.

If $\mu = (0, -\tfrac{2}{3})$, we have to show that 
\[ \langle -\tfrac{5}{3}\Lambda_0 + \mu + \rho, \gamma^{\vee}\rangle \notin -\mathbb{Z}_+ \mbox{   for any } \gamma \in \widehat{\Delta}_+^{\mathrm{re}} \quad \mbox{  and } \quad  \mathbb{Q} \widehat{\Delta}_{\lambda}^{\vee, \mathrm{re}} = \mathbb{Q} \widehat{\Pi}^{\vee}.
\]

Recall that $\rho = 4 \Lambda_0 + \bar{\rho}$; also $\gamma \in \widehat{\Delta}_+^{\mathrm{re}}$ must have the form $\gamma = \bar{\gamma} + m \delta$, for $m>0$ and $\bar{\gamma} \in \Delta$, or $m= 0$ and $\bar{\gamma} \in \Delta_+$.  We then have:
\begin{align*}
\langle -\tfrac{5}{3}\Lambda_0 + \mu + \rho, \gamma^{\vee}\rangle &= \langle(  \tfrac{7}{3} \Lambda_0 + \mu + \bar{\rho}, (\bar{\gamma}+m\delta)^{\vee} \rangle\\
&= \tfrac{2}{(\bar{\gamma}, \bar{\gamma})}\, \tfrac{7}{3}m + \langle \mu, \bar{\gamma}^{\vee} \rangle + \langle \bar{\rho}, \bar{\gamma}^{\vee} \rangle.
\end{align*}
We may then check that $\langle -\frac{5}{3}\Lambda_0 + \mu + \rho, \gamma^{\vee}\rangle \ge \frac{1}{3}$, so that $\langle -\frac{5}{3}\Lambda_0 + \mu + \rho, \gamma^{\vee}\rangle \notin -\mathbb{Z}_+$.  
One may also verify that $\widehat \Pi_{\lambda}^{\vee} = \widehat \Pi_1^\vee$ so that $\mathbb{Q} \widehat{\Delta}_{\lambda}^{\vee, \mathrm{re}} = \mathbb{Q} \widehat{\Pi}^{\vee}$.

Similarly, one can show that $\lambda = -\tfrac{5}{3}\Lambda_0 + \mu$ is admissible for $\mu= (1,-\frac{4}{3})$ and that $\widehat \Pi_{\lambda}^{\vee} = \widehat \Pi_2^\vee$.
\end{proof}

\begin{Thm}  \label{thm-main-second} Let $M$ be a weak $L(k,0)$-module from the category $\mathcal{O}$, for $k=-\frac{5}{3}, -\frac{4}{3}$, or $-\frac{2}{3}$.  Then $M$ is completely reducible.
\end{Thm}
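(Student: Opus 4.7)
The plan is to apply Proposition \ref{prop-KW2}, which asserts that a $\hat{\mathfrak{g}}$-module from the category $\mathcal{O}$ is completely reducible provided every irreducible subquotient is admissible. Thus the task reduces to controlling the irreducible subquotients of $M$ and verifying their admissibility as $\hat{\mathfrak{g}}$-modules.

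First I would observe that any irreducible subquotient $L(\nu)$ of $M$ is itself an irreducible weak $L(k,0)$-module lying in the category $\mathcal{O}$. Indeed, writing $L(k,0)=N(k,0)/J(k,0)$, the ideal $J(k,0)$ annihilates $M$ by hypothesis and hence annihilates every subquotient, so the $L(k,0)$-module structure descends; the category $\mathcal{O}$ condition is plainly preserved under submodules and quotients. Next, by the $A(V)$-correspondence in Theorem \ref{thm-Z}, such an irreducible weak module is determined up to isomorphism by its top graded component, which is an irreducible $A(L(k,0))$-module from $\mathcal{O}$. Invoking Proposition \ref{prop-finite} and the identification $L(k,\mu)\cong L(k\Lambda_0+\mu)$ recorded in Section 1.2 (cf.\ Remark \ref{rmk-VOA}), the highest weight of $L(\nu)$ must therefore have the form $\nu=k\Lambda_0+\mu$ with $\mu=(\mu_{10},\mu_{01})$ drawn from the explicit finite list enumerated in Proposition \ref{prop-finite} for the relevant value of $k$.

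Finally, Lemma \ref{lem-adm2} asserts that every weight of this form is admissible. Consequently every irreducible subquotient of $M$ is admissible, and Proposition \ref{prop-KW2} yields the complete reducibility of $M$. The genuine technical content of the argument is already encapsulated in Lemma \ref{lem-adm2}, whose proof requires a case-by-case verification that $\langle\nu+\rho,\gamma^\vee\rangle\notin-\mathbb{Z}_+$ for every $\gamma\in\widehat{\Delta}_+^{\mathrm{re}}$ and that $\mathbb{Q}\widehat{\Delta}_\nu^{\vee,\mathrm{re}}=\mathbb{Q}\widehat{\Pi}^\vee$; beyond that, the present proof is a short formal chain of citations, and I do not anticipate an obstacle.
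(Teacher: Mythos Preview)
Your proposal is correct and follows essentially the same argument as the paper's proof: take an irreducible subquotient, observe it is an $L(k,0)$-module in $\mathcal{O}$, use Remark~\ref{rmk-VOA}/Proposition~\ref{prop-finite} to pin down its highest weight, apply Lemma~\ref{lem-adm2} to conclude admissibility, and finish with Proposition~\ref{prop-KW2}. You have simply spelled out a bit more explicitly why subquotients inherit the $L(k,0)$-action, which the paper leaves implicit.
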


\begin{proof} Let $L(\lambda)$ be an irreducible subquotient of $M$.  Then $L(\lambda)$ is an $L(k,0)$-module, and we see from Remark \ref{rmk-VOA} that $\lambda$ must be a weight of the form $k \Lambda_0 + \mu$, where $\mu$ is given in Proposition \ref{prop-finite} for $k = -\frac 5 3, - \frac 4 3, -\frac 2 3$, respectively.  From Lemma \ref{lem-adm2} it follows that such a $\lambda$ is admissible.  Now Proposition \ref{prop-KW2} implies that $M$ is completely reducible.

\end{proof}

\vskip 1cm

\appendix

\section{Proof of Proposition 2.3}
In this appendix, we prove Proposition \ref{prop-sing}. We first give a few lemmas.

\begin{Lem} \label{lem-commute} \hfill
\begin{enumerate} 
\item  We have
\[ \begin{array}{l}\  [a, E_{10}(0)]= 3 E_{31}(-1),  \qquad  [b, E_{10}(0)]= 2 E_{31}(-1) E_{21}(-1), \\ \   [c , E_{10}(0)] = E_{32}(-1)E_{31}(-1)E_{10}(-1) - E_{31}^2(-1) E_{11}(-1) ,\\ \
[u, E_{10}(0)] =0 , \ [v, E_{10}(0)] =0 , \ [w, E_{10}(0)] =0 .
\end{array} \]

\item Each of the elements $a,b,c,u, v, w \in \mathcal U(\hat{ \frak g})$ commutes with $E_{01}(0)$.

\end{enumerate}
\end{Lem}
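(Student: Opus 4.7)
The plan is to reduce every identity to bracket computations in the finite-dimensional Lie algebra $\mathfrak{g}$, using the affine relation $[X(m),Y(n)] = [X,Y](m+n) + m(X,Y)\delta_{m+n,0}K$ together with the Leibniz rule for commutators. Because $E_{10}(0)$ and $E_{01}(0)$ sit in mode $0$, the central term never contributes when they are commuted against $X(-n)$ with $n\ge 1$. Consequently, $\mathrm{ad}\, E_{10}(0)$ and $\mathrm{ad}\, E_{01}(0)$ act on any monomial $X_1(-n_1)\cdots X_r(-n_r)\in\mathcal{U}(\hat{\mathfrak{g}}_-)$ as the derivation that preserves the modes and replaces each $X_i$ by $[X_i, E_{10}]$ (respectively $[X_i, E_{01}]$). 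Thus once the relevant brackets in $\mathfrak{g}$ are recorded, the whole lemma becomes mechanical.

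For part (1), the brackets I will use are $[E_{10},E_{21}] = -3E_{31}$, $[E_{10},E_{11}] = -2E_{21}$, $[E_{10},E_{01}] = E_{11}$, $[E_{10},H_{01}] = E_{10}$, $[E_{10},F_{01}] = 0$, and $[E_{10},E_{31}] = [E_{10},E_{32}] = 0$; the latter two vanishings hold because $4\alpha+\beta$ and $4\alpha+2\beta$ are not roots of $G_2$, equivalently by the Serre relation $(\mathrm{ad}\, E_{10})^4 E_{01} = 0$. Applying the derivation term-by-term to the definitions of $a, b, c$ yields the three explicit formulas in the lemma. For the vanishings, I will use that $[E_{21},E_{31}] = 0$ and $[E_{31},E_{32}] = 0$ in $\mathfrak{g}$, so that $E_{21}(-1), E_{31}(-1), E_{32}(-1)$ pairwise commute (no central contributions arise since the mode sums are nonzero). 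Then $[u,E_{10}(0)] = \tfrac{1}{3}[a^2,E_{10}(0)] - [b,E_{10}(0)] = 2E_{21}(-1)E_{31}(-1) - 2E_{31}(-1)E_{21}(-1) = 0$; the vanishing $[w,E_{10}(0)] = 0$ is trivial since every factor of $w$ already commutes with $E_{10}(0)$; and $[v,E_{10}(0)] = 0$ follows because the $-3E_{31}(-1)b$ piece arising from $-[ab,E_{10}(0)]$ is cancelled precisely by $-3[c,E_{10}(0)]$ after expanding $b$ and using $[E_{31},E_{32}] = 0$.

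Part (2) is handled in the same spirit, starting from $[E_{01},E_{10}] = -E_{11}$, $[E_{01},E_{11}] = [E_{01},E_{21}] = [E_{01},E_{32}] = 0$, $[E_{01},E_{31}] = -E_{32}$, $[E_{01},H_{01}] = -2E_{01}$, and $[E_{01},F_{01}] = H_{01}$. One gets $[a,E_{01}(0)] = 0$ immediately; in $[b,E_{01}(0)]$ the contributions $E_{32}(-1)E_{11}(-1)$ and $-E_{32}(-1)E_{11}(-1)$ cancel; and in $[c,E_{01}(0)]$ the summand $2E_{32}(-1)E_{31}(-1)E_{01}(-1)$ from $E_{31}^2(-1)E_{01}(-1)$ cancels the matching contribution from $E_{32}(-1)E_{31}(-1)H_{01}(-1)$, while the two $E_{32}(-1)^2 H_{01}(-1)$ terms cancel against each other. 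The vanishings for $u, v, w$ then follow by Leibniz from those for $a, b, c$. The main, and only, obstacle is routine but careful bookkeeping of signs and orderings in the multi-term cancellations for $[c,E_{10}(0)]$, $[v,E_{10}(0)]$, and $[c,E_{01}(0)]$; no conceptual difficulty arises.
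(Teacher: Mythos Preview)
Your proposal is correct and follows essentially the same approach as the paper: both arguments reduce everything to the Leibniz rule together with the structure constants of $G_2$, computing $[a,E_{10}(0)]$, $[b,E_{10}(0)]$, $[c,E_{10}(0)]$ first and then assembling the results for $u$, $v$, $w$. The only cosmetic difference is that you record the needed brackets $[E_{10},-]$ and $[E_{01},-]$ up front, whereas the paper computes each term inline; the substance is identical.
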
 

\begin{proof}
(1) Using the multiplication table in (\ref{xx}), it is easy to see  $[a, E_{10}(0)]= 3 E_{31}(-1)$. Next, we have
\begin{eqnarray*}
[b, E_{10}(0)] &=& [ E_{31}(-1) E_{11}(-1)
     - E_{32}(-1) E_{10}(-1) , E_{10}(0) ] \\ &=& E_{31}(-1)[E_{11}(-1), E_{10}(0)] + [E_{31}(-1), E_{10}(0) ] E_{11}(-1) \\ & & - E_{32}(-1) [ E_{10}(-1) , E_{10}(0) ]- [ E_{32}(-1)  , E_{10}(0) ] E_{10}(-1) \\ &=& 2 E_{31}(-1) E_{21}(-1) .
\end{eqnarray*}
Starting with the definition
\[ [c, E_{10}(0)] = [ E^2_{31}(-1) E_{01}(-1)
    - E_{32}(-1) E_{31}(-1) H_{01}(-1)
    - E^2_{32}(-1) F_{01}(-1),  E_{10}(0) ] ,\] we consider each term separately and obtain
 \begin{eqnarray*}
 & & [ E^2_{31}(-1) E_{01}(-1),  E_{10}(0) ] \\ &=& E_{31}^2(-1)[E_{01}(-1), E_{10}(0)] + E_{31}(-1)[E_{31}(-1), E_{10}(0) ] E_{01}(-1) +  [E_{31}(-1), E_{10}(0) ] E_{31}(-1) E_{01}(-1) \\  & =& -E_{31}^2(-1) E_{11}(-1),
\end{eqnarray*}
\begin{eqnarray*}
 & & [ E_{32}(-1)E_{31}(-1) H_{01}(-1),  E_{10}(0) ] \\ &=&  E_{32}(-1)E_{31}(-1)[H_{01}(-1), E_{10}(0)] + E_{32}(-1)[E_{31}(-1), E_{10}(0) ] H_{01}(-1) \\ & & \phantom{LLLLLLLLLLLLLLLLLLLLLLLLLLLLLL} +  [E_{32}(-1), E_{10}(0) ] E_{31}(-1) H_{01}(-1) \\  & =& - E_{32}(-1) E_{31}(-1) E_{10}(-1),
\end{eqnarray*}
and 
\begin{eqnarray*}
 & & [E^2_{32}(-1) F_{01}(-1),  E_{10}(0) ] \\ & =&  E_{32}^2(-1)[F_{01}(-1), E_{10}(0)] + E_{32}(-1)[E_{32}(-1), E_{10}(0) ] F_{01}(-1) +  [E_{32}(-1), E_{10}(0) ] E_{32}(-1) F_{01}(-1) \\  & =& 0 .
 \end{eqnarray*}
Therefore, we obtain \[ [c , E_{10}(0)] = E_{32}(-1)E_{31}(-1)E_{10}(-1) - E_{31}^2(-1) E_{11}(-1) .\]
Next, we get
\begin{eqnarray*}
[u, E_{10}(0)] &=& \tfrac 1 3 [a^2, E_{10}(0)] -[ b , E_{10}(0)] \\ &=& \tfrac 1 3 a [a, E_{10}(0)] + \tfrac 1 3 [a, E_{10}(0)]a  -[ b , E_{10}(0)] \\ & =& E_{21}(-1) E_{31}(-1)  + E_{31}(-1) E_{21}(-1)  -2 E_{31}(-1) E_{21}(-1) =0 ,
\end{eqnarray*}
and
\begin{eqnarray*}
[v, E_{10}(0)] &=& \tfrac 2 9 [a^3, E_{10}(0)] - [ab, E_{10}(0)] -3 [c,E_{10}(0)] \\ & =&  2 E_{21}^2(-1)E_{31}(-1) -a[b, E_{10}(0)]- [a, E_{10}(0)] b -3 [c,E_{10}(0)]\\ & =&  2 E_{21}^2(-1)E_{31}(-1) - 2 E_{21}(-1)E_{31}(-1)E_{21}(-1) \\ & & -3 E_{31}(-1) \left \{ E_{31}(-1)E_{11}(-1) - E_{32}(-1)E_{10}(-1) \right \} \\ & & - 3 \left \{ E_{32}(-1)E_{31}(-1)E_{10}(-1) - E_{31}^2(-1) E_{11}(-1) \right \} =0 .
\end{eqnarray*}
Finally, it is easy to see $[w, E_{10}(0)]=0$. 

(2) The equalities $[a, E_{01}(0)]=0$, $[b, E_{01}(0)]=0$,  $[c, E_{01}(0)]=0$ can be proved as in the part (1), and we omit the details. Then it immediately follows that $[u, E_{01}(0)]=0$ and $[v, E_{01}(0)]=0$. 
Since $w = \tfrac{1}{3}[a,b]$, we also obtain $[w, E_{01}(0)]=0$.
\end{proof}

\begin{Lem} \label{lem-commute-F}
We have
\begin{eqnarray*}  \ [a, F_{32}(1)] &=& -F_{11}(0), \\ \ [b,  F_{32}(1)] &=&  E_{31}(-1) F_{21}(0)- E_{11}(-1)F_{01}(0) - E_{10}(-1)H_{32}(0) +(K+1) E_{10}(-1), 
\\   \ [c ,  F_{32}(1)] &=& E_{32}(-1)E_{31}(-1)F_{32}(0)+ E_{32}(-1)H_{01}(-1)F_{01}(0)-2 E_{32}(-1)F_{01}(-1)H_{32}(0)   \\ & & + (2K+2) E_{32}(-1)F_{01}(-1)  + E_{31}^2(-1)F_{31}(0) - 2 E_{31}(-1)E_{01}(-1)F_{01}(0) \\ & & - E_{31}(-1)H_{01}(-1)H_{32}(0)+(K+1) E_{31}(-1)H_{01}(-1),
\\ \ [u, F_{32}(1)]& =& - \left (K+ \tfrac 5 3 \right ) E_{10}(-1) - E_{31}(-1)F_{21}(0)  - \tfrac 2 3 E_{21}(-1)F_{11}(0) \\ & &
+ E_{11}(-1)F_{01}(0) + E_{10}(-1)H_{32}(0) , 
\\ \  [v,  F_{32}(1)] &=& - E_{32}(-1)E_{10}(-1)F_{11}(0) - 3 E_{32}(-1)F_{01}(-1) \\ & & + \tfrac 4 3 E_{31}(-2) + E_{31}(-1)E_{11}(-1)F_{11}(0) -E_{31}(-1)H_{11}(-1) \\ & & - \tfrac 2 3 a^2 F_{11}(0) - \tfrac 1 3 a E_{10}(-1) -a [b, F_{32}(1)] -3 [ c, F_{32}(1)] , \\ \ [w,  F_{32}(1)] &=&  - E_{32}(-2)F_{01}(0) +E_{32}(-1)F_{01}(-1) \\ & & -  E_{31}(-2)H_{32}(0)+ E_{31}(-1)H_{32}(-1) +K E_{31}(-2)   .
\end{eqnarray*} 
\end{Lem}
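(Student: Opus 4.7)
The proof is a direct calculation in $\mathcal U(\hat{\frak g})$ based on two facts: the affine bracket relation
$$[X(m), Y(n)] = [X,Y](m+n) + m(X,Y)\delta_{m+n,0}K,$$
and the Leibniz rule $[XY, Z] = X[Y,Z] + [X,Z]Y$. All classical brackets $[X,Y]$ come from the $G_2$ multiplication table in \cite[Table 22.1]{FH}, and the normalization $(\theta,\theta)=2$ fixes the Killing-form values. The crucial point is that the only dual pair capable of producing a central contribution to a bracket with $F_{32}(1)$ at matching total degree is $(E_{32}, F_{32})$.

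The first step is to tabulate the basic brackets $[X(-m), F_{32}(1)]$ for $X$ ranging over the generators $E_{ij}$, $F_{01}$, $H_{01}$ appearing in $a, b, c, w$ and $m \in \{1,2\}$. Each equals $[X, F_{32}](1-m)$ plus a central term that vanishes unless $X = E_{32}$ and $m = 1$. From the $G_2$ table one reads off, for instance, $[E_{21}, F_{32}] = -F_{11}$, $[E_{31}, F_{32}]$ a multiple of $F_{01}$, $[E_{11}, F_{32}]$ a multiple of $F_{21}$, $[E_{01}, F_{32}]$ a multiple of $F_{31}$, $[H_{01}, F_{32}] = -F_{32}$, and $[E_{10}, F_{32}] = 0$ (since the weight $-(2\alpha+2\beta)$ is not a root). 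In particular, the formula $[a, F_{32}(1)] = -F_{11}(0)$ is immediate.

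With this table in hand, $[b, F_{32}(1)]$, $[c, F_{32}(1)]$, and $[w, F_{32}(1)]$ follow by applying the Leibniz rule to their defining products and substituting from the table; the $K$-terms in the stated formulas arise precisely from those summands in which an $E_{32}(-1)$ is bracketed with $F_{32}(1)$. A modest amount of normal-ordering is then required: typical moves include $H_{32}(0)F_{01}(-1) = F_{01}(-1)H_{32}(0) - F_{01}(-1)$, $H_{32}(0)E_{31}(-1) = E_{31}(-1)H_{32}(0) + E_{31}(-1)$, and $F_{01}(0)H_{01}(-1) = H_{01}(-1)F_{01}(0) + 2F_{01}(-1)$. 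The integer constants picked up from these reorderings combine with the central terms to produce the $(K+1)$ and $(2K+2)$ coefficients in the formula for $[c, F_{32}(1)]$, and similarly the $+KE_{31}(-2)$ term in $[w, F_{32}(1)]$.

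Finally, $[u, F_{32}(1)]$ and $[v, F_{32}(1)]$ are derived from $u = \tfrac{1}{3}a^2 - b$ and $v = \tfrac{2}{9}a^3 - ab - 3c$ by using $[a^n, F_{32}(1)] = \sum_{j+k=n-1} a^j[a, F_{32}(1)]a^k = -\sum_{j+k=n-1} a^j F_{11}(0) a^k$, together with the reordering $F_{11}(0)a = aF_{11}(0) + [F_{11}(0), a]$, and then combining with the previously computed $[b, F_{32}(1)]$ and $[c, F_{32}(1)]$. The stated form of $[v, F_{32}(1)]$ leaves $-a[b, F_{32}(1)] - 3[c, F_{32}(1)]$ unexpanded, which is a notational convenience saving a substantial expansion. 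The only real obstacle throughout is bookkeeping: after Leibniz one has roughly a dozen primitive terms for $[c, F_{32}(1)]$ (and correspondingly more for $[v, F_{32}(1)]$), and each central contribution from a pair $(E_{32}, F_{32})$ at matching degree must be tracked alongside the integer shifts produced by commuting Cartan elements past root vectors.
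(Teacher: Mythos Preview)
Your proposal is correct and follows essentially the same approach as the paper: the paper's proof also applies the Leibniz rule to the defining products, reads the individual brackets $[X(-m),F_{32}(1)]$ off the $G_2$ multiplication table together with the affine central term, and then normal-orders to obtain the stated expressions (it writes out only the cases $[b,F_{32}(1)]$ and $[u,F_{32}(1)]$ explicitly and declares the others similar). Your description of where the $K$-terms appear and how the integer shifts from reordering Cartan elements combine with them is exactly the mechanism the paper uses.
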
 

\begin{proof}
We only prove the equalities for $[b, F_{32}(1)]$ and $[u, F_{32}(1)]$. The other equalities can be proved similarly. We obtain
\begin{eqnarray*}
 [b, F_{32}(1)] &=& [  E_{31}(-1) E_{11}(-1)
     - E_{32}(-1) E_{10}(-1) ,  F_{32}(1)] \\ &=& E_{31}(-1)[E_{11}(-1), F_{32}(1) ] + [E_{31}(-1), F_{32}(1) ] E_{11}(-1) \\ & & - E_{32}(-1) [ E_{10}(-1) , F_{32}(1) ]- [ E_{32}(-1)  , F_{32}(1) ] E_{10}(-1) \\ &=& E_{31}(-1) F_{21}(0) - F_{01}(0)E_{11}(-1) - \left \{ H_{32}(0)-K \right \} E_{10}(-1) \\ &=&
      E_{31}(-1) F_{21}(0)- E_{11}(-1)F_{01}(0) - E_{10}(-1)H_{32}(0) +(K+1) E_{10}(-1),
\end{eqnarray*}
and
\begin{eqnarray*}
 [u, F_{32}(1)]  &=& \tfrac 1 3 a [a,  F_{32}(1)] + \tfrac 1 3 [a,  F_{32}(1)]a  -[ b ,  F_{32}(1)] \\ &=& - \tfrac 2 3 E_{21}(-1)F_{11}(0) - \tfrac 2 3  E_{10}(-1) 
 \\ & & - E_{31}(-1) F_{21}(0)+ E_{11}(-1)F_{01}(0) + E_{10}(-1)H_{32}(0) - (K+1) E_{10}(-1)
\\ & =&  - \left (K+ \tfrac 5 3 \right ) E_{10}(-1) - E_{31}(-1)F_{21}(0)  - \tfrac 2 3 E_{21}(-1)F_{11}(0) \\ & & + E_{11}(-1)F_{01}(0) + E_{10}(-1)H_{32}(0)  .
 \end{eqnarray*}

\end{proof}

We need one more lemma. 
\begin{Lem} \label{lem-zero} We have the following commutator relations:
\[  [H_{32}(0), v-w ] =3(v-w) , \quad [ F_{01}(0),v-w]=0,   \]
\begin{eqnarray*}
[ F_{11}(0),v-w] &=& \left ( \tfrac 1 3 a^2 -2 b \right ) E_{10}(-1) + a E_{31}(-1)H_{10}(-1) \\ & & -5a E_{31}(-2) +5 E_{31}(-1)E_{21}(-2) \\ & & +3 E_{31}^2(-1)F_{10}(-1)+3E_{32}(-1)E_{31}(-1)F_{11}(-1)-3aE_{32}(-1)F_{01}(-1), \\ \
 [ F_{21}(0),v-w] &=& \left (-\tfrac 2 3 a^2 + b \right ) H_{21}(-1) + \tfrac 2 3  a E_{21}(-2)-2 a E_{31}(-1)F_{10}(-1)-2a E_{32}(-1)F_{11}(-1) \\ & & +3 E_{31}(-1)E_{11}(-1)H_{01}(-1)+3E_{32}(-1)E_{10}(-1)H_{01}(-1) \\ & & -6 E_{31}(-1)E_{10}(-1)E_{01}(-1) +6 E_{32}(-1)E_{11}(-1)F_{01}(-1) \\ & &  + 4 E_{11}(-1)E_{31}(-2) - 4 E_{10}(-1)E_{32}(-2).\end{eqnarray*}
\end{Lem}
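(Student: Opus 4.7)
The plan is to verify all three identities by direct commutator calculation in $\mathcal{U}(\hat{\frak g})$, using the fact that $\mathrm{ad}\, X(m)$ is a derivation together with the affine bracket $[X(m), Y(n)] = [X, Y](m+n) + m(X,Y)\delta_{m+n,0}K$ and the $G_2$ multiplication table in (\ref{xx}).

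The identity $[H_{32}(0), v-w] = 3(v-w)$ reduces to a weight computation. Since $a = E_{21}(-1)$ has weight $2\alpha+\beta$, each of $a^3$, $ab$, $c$ has weight $3(2\alpha+\beta) = 6\alpha+3\beta$, and each summand of $w$ also has weight $(3\alpha+\beta)+(3\alpha+2\beta) = 6\alpha+3\beta$. Evaluating $\langle 6\alpha+3\beta, H_{32}\rangle$ with the normalized Killing form yields $3$, which gives the identity.

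For $[F_{01}(0), v-w] = 0$, I would first record the elementary brackets $[F_{01}(0), X(-m)]$ for every generator $X(-m)$ appearing in $v$ and $w$; these follow directly from (\ref{xx}) and the affine bracket relation. Applying the derivation property to $v = \tfrac{2}{9}a^3 - ab - 3c$ and to $w = E_{31}(-1)E_{32}(-2) - E_{32}(-1)E_{31}(-2)$ separately and collecting terms, one then verifies that the two contributions coincide.

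For the remaining two identities the method is the same but substantially more laborious. I would compile the basic brackets $[F_{i1}(0), E_{jk}(-m)]$ for $i=1,2$ and all $(j,k,m)$ entering $v$ or $w$, including the level $-2$ generators (whose brackets produce central terms that ultimately contribute to the level-$(-1)$ pieces on the right-hand side), and then expand $\mathrm{ad}\, F_{i1}(0)(v-w)$ by Leibniz. Careful reordering of noncommuting factors is essential: for example, $\mathrm{ad}\, F_{11}(0)$ applied to $ab$ yields terms such as $a \cdot [F_{11}(0), b]$ in which $a = E_{21}(-1)$ does not commute with the generators appearing inside $[F_{11}(0), b]$, and the resulting commutators contribute lower-degree pieces that must be collected correctly against the stated formulas. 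The main obstacle is combinatorial volume rather than any conceptual difficulty: each Leibniz expansion produces on the order of a dozen monomials, and matching them against the explicit right-hand sides (which mix products of level $-1$ and level $-2$ generators) demands meticulous bookkeeping, but no new idea beyond the derivation property and (\ref{xx}) is needed.
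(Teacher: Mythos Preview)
Your proposal is correct and follows essentially the same route as the paper: the paper also treats the weight identity by inspection, then proves the $F_{11}(0)$ case explicitly by computing $[F_{11}(0), a^3]$, $[F_{11}(0), ab]$, $[F_{11}(0), c]$, $[F_{11}(0), w]$ separately via the Leibniz rule and the table (\ref{xx}), and declares the remaining cases similar. One small inaccuracy in your description: bracketing with $F_{ij}(0)$ never produces a central term (the first mode index is $0$), so the level-$(-2)$ generators in $w$ simply yield level-$(-2)$ outputs such as $E_{31}(-2)$ and $E_{21}(-2)$, not $K$-contributions; this does not affect the validity of your plan.
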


\begin{proof}
Since the proofs of the other equalities are similar, we only provide a proof for $F_{11}(0)$. 
We first have
\[
[F_{11}(0), v-w] = [F_{11}(0), \tfrac 2 9 a^3 - ab -3 c- w] .
\]
Considering each term separately, we get
\begin{eqnarray*}
\ [F_{11}(0) ,  a^3] & = & 6 a^2 E_{10}(-1) - 18 aE_{31}(-2) ,\\
\ [F_{11}(0), ab]&=& [F_{11}(0), a]b +a[F_{11}(0), b] \\ &=& -2 E_{10}(-1)b - a\left \{ -E_{31}(-1)H_{11}(-1)+a E_{10}(-1)-3E_{32}(-1)F_{01}(-1)  \right \}, \\ \ [F_{11}(0), c] &=& - E_{31}^2(-1)F_{10}(-1) + aE_{31}(-1) H_{01}(-1) \\ & & -E_{32}(-1)E_{31}(-1)F_{11}(-1) + 2 a E_{32}(-1)F_{01}(-1) , \\ \   [F_{11}(0), w] &=& a E_{31}(-2) -  E_{31}(-1)E_{21}(-2).
\end{eqnarray*}
Using two more relations \[ [E_{10}(-1), b] = -2 E_{31}(-1)E_{21}(-2) \quad \text{ and } \quad H_{11}= H_{10}+3H_{01},\] one can now obtain the result for $[F_{11}(0), v-w]$.

\end{proof}

We now prove Proposition \ref{prop-sing}. For convenience, we state the proposition again:

\begin{Prop}
  The vector $v_k \in N(k,0)$ is a singular vector for the given value of $k$:
\[ v_k = \left \{ \begin{array}{ll}
u  . \mathbf{1} & \mbox{ for } \ k = -\frac{5}{3}, \\
(v+w) . \mathbf{1} & \mbox{ for } \ k = -\frac{4}{3}, \\
u(v - w) .  \mathbf{1} & \mbox{ for } \ k = -\frac{2}{3} .
\end{array} \right .  \]
\end{Prop}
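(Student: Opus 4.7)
The plan is to reduce the claim to three elementary annihilation checks. Because $v_k\mathbf{1}$ is an $\hat{\mathfrak{h}}$-weight vector in a module from category $\mathcal O$, singularity is equivalent to being killed by the three Chevalley raising generators of $\hat{\mathfrak{g}}$ of type $G_2^{(1)}$: namely $E_{10}(0)$ and $E_{01}(0)$ for the finite simple roots $\alpha,\beta$, and $F_{32}(1)$ for the affine simple root $\delta-\theta$ (here $\theta=3\alpha+2\beta$, so the raising operator attached to $\delta-\theta$ is $F_\theta(1)=F_{32}(1)$). For the first two, Lemma \ref{lem-commute} gives $[E_{10}(0),x]=[E_{01}(0),x]=0$ for each $x\in\{u,v,w\}$, hence for any polynomial in these such as $u$, $v+w$, or $u(v-w)$. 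Since $E_{10}(0)\mathbf{1}=E_{01}(0)\mathbf{1}=0$, the annihilation by $E_{10}(0)$ and $E_{01}(0)$ is automatic at all three levels.

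Everything therefore reduces to verifying $[F_{32}(1),v_k]\mathbf{1}=0$ at the prescribed $k$, using $F_{32}(1)\mathbf{1}=0$ and the fact that any mode-$0$ generator in $\{F_{01},F_{11},F_{21},F_{31},F_{32},H_{32},\dots\}$ kills $\mathbf{1}$. For $k=-\tfrac53$ the check is immediate: by the formula for $[u,F_{32}(1)]$ in Lemma \ref{lem-commute-F}, every mode-$0$ tail annihilates $\mathbf{1}$ and the only surviving term is $-(K+\tfrac53)E_{10}(-1)\mathbf{1}$, which is zero precisely when $K$ acts as $-\tfrac53$. For $k=-\tfrac43$ one applies the two formulas for $[v,F_{32}(1)]$ and $[w,F_{32}(1)]$ from Lemma \ref{lem-commute-F} to $\mathbf{1}$, using the subsidiary expansions of $[b,F_{32}(1)]\mathbf{1}$ and $[c,F_{32}(1)]\mathbf{1}$ to handle the $-a\,[b,F_{32}(1)]$ and $-3\,[c,F_{32}(1)]$ pieces hidden inside $[v,F_{32}(1)]$. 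All mode-$0$ tails die, and the remaining monomials in $\mathcal U(\hat{\mathfrak g}_-)\mathbf{1}$ carry coefficients that are affine linear in $K$ and which, by direct inspection, cancel at $K=-\tfrac43$.

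The main obstacle is the third case, $k=-\tfrac23$, because of the product $u(v-w)$. Here the Leibniz rule gives
\[
[F_{32}(1),\,u(v-w)]\,\mathbf{1}\;=\;[F_{32}(1),u]\,(v-w)\,\mathbf{1}\;+\;u\,[F_{32}(1),v-w]\,\mathbf{1}.
\]
The second summand is processed exactly as in the $-\tfrac43$ case, with the sign on $w$ reversed, yielding an explicit element of $\mathcal U(\hat{\mathfrak g}_-)\mathbf{1}$ depending linearly on $K$. The first summand is the subtle one: by Lemma \ref{lem-commute-F}, $[F_{32}(1),u]$ contains products of the form $X(-1)\,Y(0)$ with $Y(0)\in\{F_{21}(0),F_{11}(0),F_{01}(0),H_{32}(0)\}$, and when these act on $(v-w)\mathbf{1}$ rather than on $\mathbf{1}$ directly, the mode-$0$ factors no longer annihilate at once; they must first be commuted across $v-w$. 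This is exactly the purpose of Lemma \ref{lem-zero}, which records precisely the four commutators $[Y(0),v-w]$ needed. After performing these commutations and letting every mode-$0$ tail finally act on $\mathbf{1}$, one obtains a sum of monomials in $\mathcal U(\hat{\mathfrak g}_-)\mathbf{1}$ whose coefficients are affine linear in $K$; a coefficient-by-coefficient comparison in a fixed PBW ordering then shows they vanish at $K=-\tfrac23$.

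The genuine difficulty is purely combinatorial: the nested commutators in the $k=-\tfrac23$ case produce a sizable number of monomials, and this is exactly the ``drastic growth of complexity'' flagged in the introduction. In practice I would fix a PBW basis for $\mathcal U(\hat{\mathfrak g}_-)$, reduce every output to that basis, and verify the coefficient identities with computer-algebra assistance; the conceptual content of the proof is captured entirely by Lemmas \ref{lem-commute}, \ref{lem-commute-F}, and \ref{lem-zero}, together with the Leibniz rule.
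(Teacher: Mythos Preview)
Your proposal is correct and follows essentially the same route as the paper: reduce to the three raising generators $E_{10}(0)$, $E_{01}(0)$, $F_{32}(1)$, dispose of the first two via Lemma~\ref{lem-commute}, handle $F_{32}(1)$ at $k=-\tfrac53,-\tfrac43$ directly from Lemma~\ref{lem-commute-F}, and at $k=-\tfrac23$ split $[F_{32}(1),u(v-w)]$ by Leibniz and use Lemma~\ref{lem-zero} to commute the mode-$0$ tails in $[F_{32}(1),u]$ past $v-w$. The only difference is cosmetic: the paper carries out the final $k=-\tfrac23$ cancellation explicitly by hand rather than invoking a PBW/computer-algebra check.
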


\begin{proof}

To show that each vector $v_k$ is a singular vector, it suffices to check that $E_{10}(0).v_k = 0$, $E_{01}(0).v_k=0$, and $F_{32}(1).v_k=0$ for each $k$. Assume that $k = - \frac 5 3$. By Lemma  \ref{lem-commute}, we obtain \[ E_{10}(0).v_k = E_{10}(0) u  . \mathbf{1} = - [u, E_{10}(0)] . \mathbf 1 =0 ,\]  and similarly we get $E_{01}(0).v_k =0$. Now we consider $F_{32}(1)$ and obtain by Lemma \ref{lem-commute-F}
\[ F_{32}(1).v_k = - [u, F_{32}(1)].\mathbf 1 =0 .\]

Assume that $k = -\frac 4 3$. It follows from Lemma \ref{lem-commute} that $E_{10}(0).v_k=0$ and $E_{01}(0).v_k=0$. We also obtain  from Lemma \ref{lem-commute-F}
\begin{eqnarray*}
F_{32}(1).v_k &=& -[v+w, F_{32}(1)]  \\ &=& 3 E_{32}(-1)F_{01}(-1) - \tfrac 4 3 E_{31}(-2) + E_{31}(-1)H_{11}(-1) + \tfrac 1 3 a E_{10}(-1) + (k+1) aE_{10}(-1) \\ & & + 3 (2k+2) E_{32}(-1)F_{01}(-1) + 3 (k+1) E_{31}(-1)H_{01}(-1) -E_{32}(-1)F_{01}(-1) \\ & & -E_{31}(-1)H_{32}(-1) - k E_{31}(-2) \\ &=& 3 E_{32}(-1)F_{01}(-1) -2 E_{32}(-1)F_{01}(-1) -E_{32}(-1)F_{01}(-1) \\ & & - \tfrac 4 3 E_{31}(-2) +  \tfrac 4 3 E_{31}(-2) + \tfrac 1 3 a E_{10}(-1) - \tfrac 1 3 aE_{10}(-1) \\ & & + E_{31}(-1)H_{11}(-1) - E_{31}(-1)H_{01}(-1) - E_{31}(-1)H_{32}(-1) \\ &=& 0,
\end{eqnarray*}
where we drop $.\mathbf 1$ from the notation and use the equalities
\[ H_{11}= H_{10}+3H_{01} \quad \text{ and } \quad H_{32}= H_{10}+ 2H_{01} .\]

Assume that $k = -\frac 2 3$. We will continue to drop $.\mathbf 1$ from the notation. It again follows from Lemma \ref{lem-commute} that $E_{10}(0).v_k=0$ and $E_{01}(0).v_k=0$.  We now consider $F_{32}(1)$ and have \[ F_{32}(1).v_k = [F_{32}(1),u(v-w)]= [F_{32}(1),u](v-w)+u[F_{32}(1),v-w] .\]  We first compute $ [F_{32}(1),u](v-w)$. We use Lemma \ref{lem-commute-F}  and obtain:
\begin{eqnarray*}
& & [F_{32}(1),u](v-w) \\ &= &(k+ \tfrac{5}{3}) E_{10}(-1)(v-w) + E_{31}(-1)F_{21}(0)(v-w)\\
& &+\tfrac{2}{3} E_{21}(-1)F_{11}(0)(v-w) - E_{11}(-1)F_{01}(0)(v-w) - E_{10}(-1)H_{32}(0)(v-w)\\
&= &(k+ \tfrac{5}{3}) E_{10}(-1)(v-w) + E_{31}(-1)[F_{21}(0),v-w]\\
& &+\tfrac{2}{3} E_{21}(-1)[F_{11}(0),v-w] - E_{11}(-1)[F_{01}(0),v-w] - E_{10}(-1)[H_{32}(0),v-w].
\end{eqnarray*}

Now using Lemma \ref{lem-zero} and the fact that $H_{21}=2 H_{10}+ 3H_{01}$ along with the relation $[a,b] = 3w$, we obtain the following:
\begin{eqnarray*}
& & [F_{32}(1),u](v-w)\\ &=& \tfrac{2}{9}(k-\tfrac{1}{3}) a^3 E_{10}(-1) - k\cdot b\ a\ E_{10}(-1) -(3k+2) E_{10}(-1)\ c\\  
& &-(4k+\tfrac{8}{3}) w E_{10}(-1) -6 u E_{31}(-1) H_{01}(-1) - 6 u E_{32}(-1) F_{01}(-1)\\
& & -2 u E_{31}(-1) H_{10}(-1) +(2k+\tfrac 4 3) E_{31}(-1) a E_{21}(-2) + 3k \cdot b E_{31}(-2)  -(2k+\tfrac 2 3) a^2 E_{31}(-2)\\
&= & -\tfrac{2}{9} a^3 E_{10}(-1)+ \tfrac{2}{3} b a E_{10}(-1) - 6 u E_{31}(-1)H_{01}(-1)\\
& & - 6 u E_{32}(-1) F_{01}(-1) - 2 u E_{31}(-1) H_{10}(-1) + 2u E_{31}(-2)\\
&= & -\tfrac{2}{3} u a E_{10}(-1) - 6 u E_{31}(-1)H_{10}(-1)\\
& & - 6 u E_{32}(-1) F_{01}(-1) - 2 u E_{31}(-1) H_{10}(-1) + 2u E_{31}(-2),
\end{eqnarray*}
where the second equality is obtained by substituting $k=-\tfrac{2}{3}$.

Now we finally compute $u [F_{32}(1), v-w]$.  From Lemma \ref{lem-commute-F} and $H_{11}=H_{10}+3H_{01}$, we obtain:
\begin{eqnarray*}
u [F_{32}(1),v-w]&= & (6k+10) u\ E_{32}(-1)F_{01}(-1) + (k+\tfrac{4}{3}) u a E_{10}(-1)\\
& & + 2 u E_{31}(-1) H_{10}(-1) +(3k+8) u E_{31}(-1) H_{01}(-1) + (k - \tfrac{4}{3}) u E_{31}(-2)\\
&= & 6 u\ E_{32}(-1)F_{01}(-1) +\tfrac{2}{3} u a E_{10}(-1)\\
& & + 2 u E_{31}(-1) H_{10}(-1) + 6 u E_{31}(-1) H_{01}(-1) - 2 u E_{31}(-2),
\end{eqnarray*}
where we again substitute $k = - \tfrac{2}{3}$.  Now it is  clear that \[ F_{32}(1).v_k = [F_{32}(1),u]\ (v-w) + u\ [F_{32}(1),v-w] = 0 .  \]
\end{proof}

\section{Lemmas for Construction of Polynomials}

 The following results will be useful.  
\begin{Lem}\cite{P} \label{lem-product}
Let $X \in \frak g$ and let $Y_1,\dots, Y_m \in \mathcal{U}(\frak g)$.  Then \[(X^n)_L(Y_1 \dots Y_m)= \sum_{\substack{(k_1, \dots, k_m)\in(\mathbb{Z}_{+})^m\\ \sum{k_i=n}} }\binom{n}{k_1 \dots k_m}(X^{k_1})_L Y_1\dots (X^{k_m})_L Y_m,\]
where $\binom{n}{k_1\dots k_n} = \frac{n!}{k_1!\cdots k_m!}$.
\end{Lem}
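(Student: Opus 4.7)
The plan is to recognize the identity as the standard multinomial Leibniz formula for the $n$-th power of a derivation. Since $X\in\mathfrak{g}$, the operator $X_L=\mathrm{ad}\,X$ is a derivation on the associative algebra $\mathcal{U}(\mathfrak{g})$: for any $Y,Z\in\mathcal{U}(\mathfrak{g})$,
\[
X_L(YZ)=[X,YZ]=[X,Y]Z+Y[X,Z]=(X_LY)Z+Y(X_LZ),
\]
which is immediate from the associative identity $XYZ-YZX=(XY-YX)Z+Y(XZ-ZX)$.

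Next I would establish the two-factor case by induction on $n$:
\[
(X^n)_L(YZ)=\sum_{k=0}^{n}\binom{n}{k}(X^k)_LY\cdot (X^{n-k})_LZ.
\]
The case $n=0$ is trivial and $n=1$ is the derivation property above. For the inductive step, apply $X_L$ once more to the right-hand side for $n$, use the derivation property termwise, re-index, and collapse with Pascal's identity $\binom{n}{k}+\binom{n}{k-1}=\binom{n+1}{k}$.

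I would then extend to arbitrary $m$ by induction on the number of factors. The case $m=2$ is the previous step. For $m\ge 3$, write $Y_1\cdots Y_m=Y_1\cdot(Y_2\cdots Y_m)$ and apply the two-factor formula to get
\[
(X^n)_L(Y_1\cdots Y_m)=\sum_{k_1=0}^{n}\binom{n}{k_1}(X^{k_1})_LY_1\cdot (X^{n-k_1})_L(Y_2\cdots Y_m).
\]
The inductive hypothesis expands the second factor as a sum over compositions $(k_2,\dots,k_m)$ of $n-k_1$ with multinomial coefficients $\binom{n-k_1}{k_2,\dots,k_m}$. Combining via $\binom{n}{k_1}\binom{n-k_1}{k_2,\dots,k_m}=\binom{n}{k_1,\dots,k_m}$ yields the stated multinomial sum over all $(k_1,\dots,k_m)\in(\mathbb{Z}_+)^m$ with $\sum k_i=n$.

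There is no real obstacle here: the identity is the abstract multinomial Leibniz rule for a derivation on an associative algebra, so the proof is purely combinatorial bookkeeping once the derivation property is noted. The only minor care needed is in re-indexing the sums correctly in the inductive step and in applying the identity $\binom{n}{k_1}\binom{n-k_1}{k_2,\dots,k_m}=\binom{n}{k_1,\dots,k_m}$ to repackage the iterated binomial coefficients as a single multinomial coefficient.
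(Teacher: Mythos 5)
Your proof is correct, but it takes a different route from the paper. You argue by double induction: first you verify that $X_L=\mathrm{ad}\,X$ is a derivation of $\mathcal{U}(\mathfrak{g})$, then you prove the two-factor binomial Leibniz formula by induction on $n$ using Pascal's identity, and finally you induct on the number of factors $m$, repackaging the iterated binomial coefficients into a single multinomial coefficient. The paper instead packages the whole computation into an exponential generating function: it forms $\exp(Dt)\in(\mathrm{End}\,\mathcal{U}(\mathfrak{g}))[[t]]$ for the derivation $D=X_L$, invokes the homomorphism property $\exp(Dt)(Y_1\cdots Y_m)=\exp(Dt)Y_1\cdots\exp(Dt)Y_m$ (citing \cite{LL}), and extracts the coefficient of $t^n$, which produces the multinomial sum in one stroke. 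Your argument is more self-contained and elementary, requiring no formal power series and no external citation beyond the associativity of $\mathcal{U}(\mathfrak{g})$; the paper's argument is shorter on the page and has the advantage that the same generating-function identity is reused later (in the proof of Lemma \ref{lem-poly}(3)), so the two computations share a common mechanism. Either proof is complete and acceptable.
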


\begin{proof}
This can be seen most readily by considering an exponential generating function.  
Given a derivation $D$ of $\mathcal{U}(\frak g)$, we may form the generating function \[ \exp(Dt)= 1 + Dt + \frac{D^2}{2!}t^2 + \cdots \in (\mathrm{End}\, \mathcal{U}(\frak g)) [[t]] .\]  Applying this to a $Y \in \mathcal{U}(\frak g)$, we obtain an element $\exp(Dt) Y \in \mathcal{U}(\frak g) [[t]]$.  The lemma is a direct consequence of the fact that $\exp(Dt)$ satisfies the identity \begin{equation}\label{eq-Lep}\exp(Dt)(Y_1 \cdots Y_n) = \exp(Dt)Y_1 \cdots \exp(Dt)Y_n .\end{equation} (See \cite{LL}.)  To obtain the lemma, replace $D$ with the adjoint action $X_L (= ad \, X)$ in the equation (\ref{eq-Lep}), and equate the coefficient of $t^n$ on both sides.  Finally, multiplying both coefficients by $n!$, we obtain the identity in the lemma.
\end{proof}

\begin{Lem}\label{lem-poly} \hfill
\begin{enumerate}
\item $(E_{ij}^m)_L (F_{ij}^m) \in m!H_{ij}(H_{ij}-1)\cdots(H_{ij}-m+1)+\mathcal{U}(\frak g)E_{ij}$, for all $i\alpha + j\beta \in \Delta_+$.
\item Suppose $X \in  \mathcal{U}(\frak g)_0$, the zero-weight subspace of $\mathcal U(\frak g)$.  Then $X \in \frak n_{-}\ \mathcal{U}(\frak g)$ if and only if $X \in \mathcal{U}(\frak g) \frak n_{+}$. 
\item  For $Y\in \mathcal{U}(\frak g)$ and $n>r>0$, we have \[(E_{ij}^n)_L (F_{ij}^r Y) \in F_{ij}\mathcal{U}(\frak g) + \tfrac{n!}{(n-r)!} (H_{ij}-n+r)\cdots(H_{ij}-n+1)(E_{ij}^{n-r})_L Y +  \mathcal{U}(\frak g)E_{ij}. \] 
\end{enumerate}
\end{Lem}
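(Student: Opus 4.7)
All three parts of Lemma \ref{lem-poly} concern how iterated adjoint actions by $E_{ij}$ interact with the one-sided ideals $F_{ij}\mathcal{U}(\frak g)$ and $\mathcal{U}(\frak g) E_{ij}$; the $\mathfrak{sl}_2$-triple $\{E_{ij}, F_{ij}, H_{ij}\}$ and the PBW theorem provide the key structural inputs throughout.

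For part (1), my plan is to start from the standard $\mathfrak{sl}_2$-identity $[E_{ij}, F_{ij}^s] = s F_{ij}^{s-1}(H_{ij} - s + 1)$ and iterate $(E_{ij})_L = ad\, E_{ij}$ on $F_{ij}^m$. Each successive bracket either lowers the $F$-power by one (introducing a shifted factor $(H_{ij} - j)$) or converts an $H$-factor into $-2E_{ij}$ via $[E_{ij}, H_{ij}-j] = -2E_{ij}$. Accumulating the unique configuration in which every bracket lands on an $F$ yields the main term $m!\,H_{ij}(H_{ij}-1)\cdots(H_{ij}-m+1)$. The remaining contributions have the form of weight-zero PBW monomials $F_{ij}^a H_{ij}^b E_{ij}^a$ with $a \geq 1$, which manifestly lie in $\mathcal{U}(\frak g) E_{ij}$.

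For part (2), I would use the PBW decomposition $\mathcal{U}(\frak g) \cong \mathcal{U}(\frak n_-) \otimes \mathcal{U}(\frak h) \otimes \mathcal{U}(\frak n_+)$ to identify $\mathcal{U}(\frak g)\,\frak n_+$ with the span of ordered PBW monomials having a nontrivial $\mathcal{U}(\frak n_+)$-factor, and $\frak n_-\,\mathcal{U}(\frak g)$ similarly. For any PBW monomial of a zero-weight element, the equation $-\sum a_i\alpha_i + \sum c_j\beta_j = 0$ with $\alpha_i, \beta_j$ positive roots forces $\sum a_i = 0$ precisely when $\sum c_j = 0$. Thus the $\frak n_-$- and $\frak n_+$-degrees vanish together on each PBW summand of a zero-weight element, giving the equivalence.

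For part (3), the plan is to prove the case $r=1$ directly and then induct on $r$. Using $(E_{ij})_L F_{ij} = H_{ij}$, $(E_{ij}^2)_L F_{ij} = -2E_{ij}$, and $(E_{ij}^k)_L F_{ij} = 0$ for $k \geq 3$, the two-factor Leibniz rule gives $(E_{ij}^n)_L(F_{ij}Z) = F_{ij}(E_{ij}^n)_L Z + nH_{ij}(E_{ij}^{n-1})_L Z - n(n-1)E_{ij}(E_{ij}^{n-2})_L Z$. Combined with $E_{ij}(E_{ij}^{n-2})_L Z \equiv (E_{ij}^{n-1})_L Z \pmod{\mathcal{U}(\frak g) E_{ij}}$ (from $E_{ij}Z = ZE_{ij} + (E_{ij})_L Z$), this reduces the base case to $(E_{ij}^n)_L(F_{ij}Z) \equiv n(H_{ij} - n + 1)(E_{ij}^{n-1})_L Z$ modulo $F_{ij}\mathcal{U}(\frak g) + \mathcal{U}(\frak g) E_{ij}$, valid for any $Z$. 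Applying this with $Z = F_{ij}^{r-1}Y$ and invoking the induction hypothesis for $(n-1, r-1)$, the $H_{ij}$-factors accumulate and commute among themselves to give $(H_{ij}-n+r)(H_{ij}-n+r-1)\cdots(H_{ij}-n+1)$ with leading coefficient $\frac{n!}{(n-r)!}$. The key subtlety to verify is that left multiplication by $H_{ij}$ preserves the ideal sum $F_{ij}\mathcal{U}(\frak g) + \mathcal{U}(\frak g) E_{ij}$ so that the modulo relation propagates through the induction; this follows from $H_{ij} F_{ij} = F_{ij}(H_{ij} - 2)$ and the fact that $\mathcal{U}(\frak g) E_{ij}$ is a left ideal.
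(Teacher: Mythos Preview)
Your arguments for parts (1) and (2) are correct and match the paper's (brief) indications: direct $\mathfrak{sl}_2$-computation for (1) and the PBW decomposition for (2). Your remark that the error terms in (1) are literally of the shape $F_{ij}^a H_{ij}^b E_{ij}^a$ is slightly more specific than is strictly justified mid-computation, but since everything takes place in the copy of $\mathcal{U}(\mathfrak{sl}_2)$ generated by $E_{ij},F_{ij},H_{ij}$, the weight-zero error terms do land in $\mathcal{U}(\frak g)E_{ij}$ as you claim.

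For part (3) your proof is correct but genuinely different from the paper's. The paper works with the exponential generating function $\exp((\mathrm{ad}\,E)t)$, writes $\exp((\mathrm{ad}\,E)t)(F^rY)=(F+Ht-Et^2)^r\exp((\mathrm{ad}\,E)t)Y$, expands $(F+Ht-Et^2)^r$ modulo $F\,\mathcal{U}(\frak g)[[t]]$, and then extracts the coefficient of $t^n$. This produces a sum over $i=0,\dots,r$ that must be collapsed via the falling-factorial identity $(x-n+r)_{(r)}=\sum_{i}\binom{r}{i}(-1)^i(n-r)_{(i)}(x-i)_{(r-i)}$. Your induction on $r$, with base case $(E^n)_L(FZ)\equiv n(H-n+1)(E^{n-1})_LZ$ and the observation that left multiplication by $H-n+1$ preserves $F\,\mathcal{U}(\frak g)+\mathcal{U}(\frak g)E$, builds the product $(H-n+r)\cdots(H-n+1)$ one factor at a time and thereby avoids that combinatorial identity entirely. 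The generating-function route has the virtue of displaying all the cross terms at once and is closer in spirit to Lemma \ref{lem-product}; your inductive route is more elementary and makes the stability of the ideal sum under left $H$-multiplication the only nontrivial check.
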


\begin{proof}
Part (1) follows from direct computation and part (2) follows by considering a PBW basis given in triangular form for $\mathcal{U}(\frak g)_0$.  For part (3), we consider an exponential generating function.  For simplicity, let us write $E, H, F$, in place of $E_{ij}, H_{ij}, F_{ij} $.  We then have: 
	
\begin{eqnarray}
\exp( (ad E) t) F^r Y  &=  (\exp( (ad E) t) F) ^r \ \exp( (ad E) t) Y \nonumber \\
&= ( F + H t - E t^2) ^ r \ \exp( (ad E) t) Y \label{eq-exp}
\end{eqnarray}
One can check
\begin{equation}  \label{eq-noexp}
( F + H t - E t^2)^ r   \in   F\mathcal{U}(\frak g)[[t]] + \sum_{i=0}^r \binom{r}{i}(-1)^i (H-i)(H-i-1)\cdots(H-r+1) E^i\ t^{r+i}
\end{equation}

 For convenience, we introduce the notation $(x)_{(i)} = x(x-1)\cdots (x-i+1)$ for $i>0$, and $(x)_{(0)}=1$.  Then we have 
\begin{align}
(x)_{(i)}&= (-1)^i (-x+i-1)_{(i)}, \label{eq-binom1}\\
(x+y)_{(m)}&= \displaystyle \sum_{i=0}^m \binom{m}{i}(x)_{(i)}(y)_{(m-i)}\label{eq-binom2}.
\end{align}
We  obtain the following identity using \eqref{eq-binom1} and\eqref{eq-binom2}:
\begin{eqnarray}
(x-n+r)_{(r)} &= &(-1)^r (n-r-(x-r+1))_{(r)} \nonumber \\
&=& (-1)^r  \sum_{i=0}^r \binom{r}{i}(n-r)_{(i)}(-(x-r+1))_{(r-i)} \nonumber\\
&= &(-1)^r \sum_{i=0}^r \binom{r}{i}(n-r)_{(i)}(-1)^{r-i}(x-i)_{(r-i)} \nonumber\\
&= &\sum_{i=0}^r \binom{r}{i}(-1)^i (n-r)_{(i)}(x-i)_{(r-i)} . \label{eqn-comb}
\end{eqnarray}

 Using this notation we combine equations \eqref{eq-exp}  and \eqref{eq-noexp} to write:
\[
\exp( (ad E) t) F^r Y  \in 
F\mathcal{U}(\frak g)[[t]] + \sum_{i=0}^r \binom{r}{i}(-1)^i (H-i)_{(r-i)} E^i \ t^{r+i}  \ \exp( (ad E) t) Y .
\]

Taking the coefficient of $t^n$ on both sides gives:
\begin{eqnarray} 
\frac{1}{n!}(ad E)^n (F^r Y) &\in& F\mathcal{U}(\frak g) +  \sum_{i=0}^r \binom{r}{i}(-1)^i (H-i)_{(r-i)}E^{i} \frac{1 }{(n-r-i)!} (ad E)^{n-r-i}  Y \nonumber \\ 
&\subseteq & F \mathcal{U}(\frak g) + \sum_{i=0}^r \binom{r}{i}(-1)^i  (H-i)_{(r-i)}
\frac{1}{(n-r-i)!}(ad E)^{n-r} Y +\mathcal{U}(\frak g)E   \label{eqn-ad}
\end{eqnarray}
With the substitution $x=H$, we obtain from (\ref{eqn-comb})
\begin{equation} \label{eqn-identity}  (H-n+r)_{(r)} = \sum_{i=0}^r \binom{r}{i} (-1)^{i} \frac{(n-r)!}{(n-r-i)!} (H-i)_{(r-i)} .\end{equation}
After multiplying (\ref{eqn-ad}) by $n!$, we use the identity (\ref{eqn-identity}) to obtain
\[ (ad E)^n (F^r Y) \in  F \mathcal{U}(\frak g) + \tfrac {n!}{(n-r)!} (H-n+r)_{(r)} (ad E)^{n-r} Y +\mathcal{U}(\frak g)E .\]
This proves part (3).
\end{proof}

The following lemmas will be needed for the construction of certain polynomials.

\begin{Lem} \label{lem-firststep}  The following identities hold in $\mathcal{U}(\frak g)$.
First we have:
\begin{eqnarray*}
 (F_{21}^2)_L [a] &= &-2 F_{21},\\
 (F_{21}^4)_L [b] &= & 4!(F_{31}F_{11}-F_{32}F_{10}),\\
 (F_{21}^6)_L [c] &= &-6!(F_{31}^2F_{01}-F_{32}F_{31}H_{01}-F_{32}^2 E_{01}),\ and\\
 (F_{21}^3)_L [a] &= &(F_{21}^5)_L [b]\ =\ (F_{21}^7)_L [c]= 0.
 \end{eqnarray*}
 Next we have: 
 \begin{eqnarray*}
  (F_{31}) _L [a] &= & F_{10},\\
 (F_{31}^2)_L [b] &= &-2!(F_{31}E_{11}-F_{21}E_{01}),\\
 (F_{31}^3)_L [c] &= & 3!(F_{31}(H_{32}+1)E_{01}+F_{32}E_{01}^2-F_{31}^2 E_{32}),\ and\\
 (F_{31}^2)_L [a] &= &(F_{31}^3)_L [b]\ =\ (F_{31}^4)_L [c]= 0.
\end{eqnarray*}
Finally we have:
\begin{eqnarray*}
  (F_{32}) _L [a] &= & F_{11},\\
 (F_{32}^2)_L [b] &= & 2!(F_{32}E_{10} - F_{21}F_{01}),\\
 (F_{32}^3)_L [c] &= &-3!(F_{32}F_{01}(H_{31}+2) + F_{31}F_{01}^2 - F_{32}^2 E_{31}),\ and\\
 (F_{32}^2)_L [a] &= &(F_{32}^3)_L [b]\ =\ (F_{32}^4)_L [c]= 0.
\end{eqnarray*}

\end{Lem}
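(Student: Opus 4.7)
The plan is to compute each adjoint action in Lemma \ref{lem-firststep} directly, using that $(ad\, F_{ij})$ is a derivation of $\mathcal{U}(\mathfrak{g})$ together with the multinomial expansion of Lemma \ref{lem-product}. First I would record the one-step brackets $[F_{ij}, E_{kl}]$, $[F_{ij}, H_{kl}]$, and $[F_{ij}, F_{kl}]$ for the pairs $(ij,kl)$ that appear in $[a]$, $[b]$ and $[c]$, reading the structure constants off the $G_2$ multiplication table in \cite[Table 22.1]{FH}. Every such bracket is a weight vector of weight $\alpha_{kl}-\alpha_{ij}$, which pins its form down up to a scalar and serves as a consistency check throughout.

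For $[a]=E_{21}$ the $F_{21}$-claims reduce to $\mathfrak{sl}_2$-theory inside the subalgebra spanned by $\{E_{21},F_{21},H_{21}\}$, yielding $(F_{21})_L E_{21}=-H_{21}$, $(F_{21}^2)_L E_{21}=-2 F_{21}$, and $(F_{21}^3)_L E_{21}=0$; the identities $(F_{31})_L E_{21}=F_{10}$ and $(F_{32})_L E_{21}=F_{11}$ are immediate from the table. For $[b]=E_{31}E_{11}-E_{32}E_{10}$, I would apply the binomial Leibniz rule to each summand: in $(F_{21}^4)_L[b]$ only the middle splitting $\binom{4}{2}(F_{21}^2)_L E_{ij}\cdot(F_{21}^2)_L E_{kl}$ contributes, since the extremal splittings force $(F_{21})_L$ onto the bottom of the appropriate $\mathfrak{sl}_2$-string; evaluating each $(F_{21}^2)_L E_{ij}$ via its own $\mathfrak{sl}_2$-triple produces the advertised $4!(F_{31}F_{11}-F_{32}F_{10})$. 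The computations of $(F_{31}^2)_L[b]$ and $(F_{32}^2)_L[b]$ use a single cross-splitting each, because $(F_{31})_L E_{31}=0$ and $(F_{32})_L E_{32}=0$. For $[c]=E_{31}^2 E_{01}-E_{32}E_{31}H_{01}-E_{32}^2 F_{01}$, I would apply the trinomial form of Lemma \ref{lem-product} to each of the three summands, restricting to index tuples $(k_1,k_2,k_3)$ whose weight-drops on the individual factors actually land inside the relevant $\mathfrak{sl}_2$-string; only a small number of contributions survive, and these assemble to the asserted formulas.

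The final vanishing statements then follow either by one additional application of $ad\,F_{ij}$ (for instance $(F_{21}^3)_L[a]=-2[F_{21},F_{21}]=0$), or, more conceptually, from the observation that $[a],[b],[c]$ lie in finite-dimensional $\mathfrak{sl}_2(\alpha_{ij})$-submodules of $\mathcal{U}(\mathfrak{g})$ of $\alpha_{ij}$-weights $2,4,6$ along $\alpha_{21}$ and $1,2,3$ along $\alpha_{31},\alpha_{32}$, so that further applications of $(ad\,F_{ij})$ push out of the string. The main obstacle is purely bookkeeping: tracking signs and multinomial coefficients in the $[c]$ computations, where three factors combine with the non-unit scalars from the normalizations fixed in \eqref{xx}. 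The overall sign, notably the minus in $(F_{21}^6)_L[c]=-6!(F_{31}^2 F_{01}-F_{32}F_{31}H_{01}-F_{32}^2 E_{01})$, arises from an odd number of sign flips accumulated in the iterated brackets when passing from the $E$-basis to the $F$-basis, and so has to be verified term by term rather than deduced from a single symmetry argument.
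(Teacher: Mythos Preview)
Your overall plan coincides with the paper's proof: apply the multinomial Leibniz formula of Lemma~\ref{lem-product} to each monomial in $[a],[b],[c]$ and evaluate the surviving one-step brackets from the $G_2$ table. Your $\mathfrak{sl}_2$-string justification for the vanishing statements is a clean alternative to the paper's explicit cancellation of $(F_{21}^5)_L[b]$.

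There is, however, one concrete error. For $(F_{21}^4)_L[b]$ you assert that only the middle splitting $\binom{4}{2,2}$ survives. This is false: the $(2\alpha+\beta)$-string through $3\alpha+\beta$ has length $4$, so $(F_{21}^3)_L E_{31}=6F_{32}$ and $(F_{21}^3)_L E_{32}=-6F_{31}$ are nonzero, while $(F_{21})_L E_{11}$ and $(F_{21})_L E_{10}$ are obviously nonzero as well. Thus the $\binom{4}{3,1}$ contribution does not vanish. In fact the $(2,2)$ piece alone yields $72(F_{31}F_{11}-F_{32}F_{10})$, three times too large; only after adding the $(3,1)$ piece $-48(F_{31}F_{11}-F_{32}F_{10})$ do you recover the correct $4!(F_{31}F_{11}-F_{32}F_{10})$. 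The paper's proof keeps both the $(3,1)$ and $(2,2)$ terms explicitly. The same warning applies to your sketch for $[c]$: asymmetric multinomial tuples genuinely contribute, so the bookkeeping cannot be shortened by a ``middle-term-only'' heuristic.
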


\begin{proof}
Using Lemma \ref{lem-product}, we have:
\begin{align*} 
(F_{21}^4)_L [b] = &\phantom{LL} \tbinom{4}{3\ 1}(F_{21}^3)_L E_{31}({F_{21}}_L E_{11}) -\tbinom{4}{3\ 1}(F_{21}^3)_L E_{32} ({F_{21}}_L E_{10})\\ 
 & +\tbinom{4}{2\ 2}(F_{21}^2)_L E_{31}(F_{21}^2)_L E_{11} -\tbinom{4}{2\ 2}(F_{21}^2)_L E_{32} (F_{21}^2)_L E_{10}\\
= &\phantom{LL} \tbinom{4}{3\ 1}(6 F_{32} )(2 F_{10} ) -\tbinom{4}{3\ 1}(-6 F_{31} )(-2 F_{11} )\\
 & +\tbinom{4}{2\ 2}(-2 F_{11} )(-6 F_{31} ) -\tbinom{4}{2\ 2}(2 F_{10} ) (6 F_{32} )\\
= & \phantom{LL} 4! F_{31}F_{11} - 4! F_{32}F_{10}.
\end{align*}
We also have:
\begin{align*} 
(F_{21}^5)_L [b] = &\phantom{LL} \tbinom{5}{3\ 2}(F_{21}^3)_L E_{31}(F_{21}^2)_L E_{11} -\tbinom{5}{3\ 2}(F_{21}^3)_L E_{32} (F_{21}^2)_L E_{10}\\ 
= &\phantom{LL} \tbinom{5}{3\ 2}(6 F_{32} )(-6 F_{31} ) -\tbinom{5}{3\ 2}(-6 F_{31} )(6 F_{32} )\\
= &\phantom{LL} 0.
\end{align*}
The other cases are proved similarly.
\end{proof}

\begin{Lem} \label{lem-secondstep}
The following identities hold in $\mathcal{U}(\frak g)$.  First we have:
\begin{eqnarray*}
 \tfrac{1}{2}(E_{21}F_{21}^2)_L [a] & = & H_{21} ,\\
 \tfrac{1}{4!}(E_{21}^2F_{21}^4)_L [b] & \equiv & -2 H_{21}  \quad (\mathrm{mod} \ \mathcal{U}(\mathfrak{g})\mathfrak{n}_+) , \\ 
 \tfrac{1}{6!}(E_{21}^3F_{21}^6)_L [c] & \equiv & 3!H_{01}(H_{01} + 2)  \quad (\mathrm{mod} \ \mathcal{U}(\mathfrak{g})\mathfrak{n}_+) . 
\end{eqnarray*}
Next:
\[ (E_{10}F_{31}) _L [a] = H_{10} , \quad \tfrac{1}{2}(E_{10}^2F_{31}^2) _L [b] \equiv  \tfrac{1}{3!}(E_{10}^3F_{31}^3) _L [c] \equiv 0  \quad (\mathrm{mod} \ \mathcal{U}(\mathfrak{g})\mathfrak{n}_+) .
\]
Finally:
\begin{eqnarray*}
(E_{11}F_{32}) _L [a]& =&H_{11}, \\
\tfrac{1}{2}(E_{11}^2F_{32}^2) _L [b] & \equiv & -6 H_{01}  \quad (\mathrm{mod} \ \mathcal{U}(\mathfrak{g})\mathfrak{n}_+) , \\
\tfrac{1}{3!}(E_{11}^3F_{32}^3) _L [c] & \equiv & -6 H_{01}(H_{31}+2)  \quad (\mathrm{mod} \ \mathcal{U}(\mathfrak{g})\mathfrak{n}_+) . 
\end{eqnarray*}
\end{Lem}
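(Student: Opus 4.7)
The plan is to chain together Lemma~\ref{lem-firststep}, the multinomial Leibniz formula of Lemma~\ref{lem-product}, and the reduction of Lemma~\ref{lem-poly}(1), treating all nine identities by the same three-step recipe. Fix a pair $(E,F) \in \{(E_{21},F_{21}),(E_{10},F_{31}),(E_{11},F_{32})\}$ together with the relevant powers $n,m$ and an element $x \in \{[a],[b],[c]\}$. First, I would apply Lemma~\ref{lem-firststep} to rewrite $(F^n)_L\, x$ as its explicit expression in $\mathcal{U}(\mathfrak n_-)$: a monomial for $x=[a]$, a two-term sum for $x=[b]$, and a three-term sum for $x=[c]$. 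Second, apply $(E^m)_L = (\mathrm{ad}\,E)^m$ to this expression, expanding via Lemma~\ref{lem-product} into a sum indexed by compositions $(k_1,\dots,k_r)$ of $m$, where $r$ is the length of the given monomial. Third, for each factor $(E^{k_i})_L F_{i'j'}$ produced, isolate the $\mathcal{U}(\mathfrak h)$-component modulo $\mathcal{U}(\mathfrak g)\mathfrak n_+$ using Lemma~\ref{lem-poly}(1) when $F_{i'j'}=F$, and from direct bracket computations read off the $G_2$ multiplication table of \cite[Table~22.1]{FH} otherwise.

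As a representative example, $\tfrac{1}{4!}(E_{21}^2F_{21}^4)_L[b]$ can be handled by feeding $(F_{21}^4)_L[b] = 4!(F_{31}F_{11}-F_{32}F_{10})$ from Lemma~\ref{lem-firststep} into a binomial expansion of $(E_{21}^2)_L$ across each product, and then collecting the terms of total weight zero via the brackets $[E_{21},F_{ij}]$ for $ij \in \{31,11,32,10\}$; the trailing positive-root factors may be dropped modulo $\mathcal{U}(\mathfrak g)\mathfrak n_+$, and the surviving contributions coalesce into a multiple of $H_{21}$. The cubic case for $[c]$ proceeds analogously starting from the three-term expansion for $(F^n)_L[c]$ in Lemma~\ref{lem-firststep}; after invoking Lemma~\ref{lem-product} with $r=3$ and collecting only the weight-zero contributions, the $H$-polynomials $H_{01}(H_{01}+2)$ and $H_{01}(H_{31}+2)$ emerge through commutators moving a single Cartan element past the $H$ produced by the inner pair reductions. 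The second and third groups of identities follow by the same bookkeeping with $(E_{10},F_{31})$ and $(E_{11},F_{32})$ in place of $(E_{21},F_{21})$.

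The main obstacle is combinatorial rather than conceptual: in the cubic case $[c]$ the number of compositions generated by Lemma~\ref{lem-product} grows quickly, and one must be disciplined about identifying which survive as weight-zero terms after the final reductions. Lemma~\ref{lem-poly}(2) streamlines this, since within the zero-weight subspace any monomial with a positive-root tail already lies in $\mathcal{U}(\mathfrak g)\mathfrak n_+$ and can be discarded, leaving only pure $\mathcal{U}(\mathfrak h)$-contributions to be assembled. The vanishings $\tfrac{1}{2}(E_{10}^2F_{31}^2)_L[b]\equiv 0$ and $\tfrac{1}{3!}(E_{10}^3F_{31}^3)_L[c]\equiv 0$ occur precisely because every surviving composition retains at least one positive root factor on the right, so no pure $H$-polynomial emerges. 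Once these reductions are carried out, what remains are short identities among $H_{ij}$'s which are verified directly using the coroot relations $H_{21}=2H_{10}+3H_{01}$, $H_{11}=H_{10}+3H_{01}$, and $H_{32}=H_{10}+2H_{01}$.
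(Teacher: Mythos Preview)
Your proposal is correct and follows essentially the same approach as the paper: the paper's proof also feeds the explicit expressions from Lemma~\ref{lem-firststep} into the multinomial Leibniz expansion of Lemma~\ref{lem-product}, computes the brackets term by term, and reduces modulo $\mathcal{U}(\mathfrak{g})\mathfrak{n}_+$, treating $\tfrac{1}{4!}(E_{21}^2F_{21}^4)_L[b]$ as the worked example and declaring the rest analogous. Your write-up is slightly more systematic in its organization (and makes explicit the role of Lemma~\ref{lem-poly}(2) in discarding positive-root tails), but there is no substantive difference in method.
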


\begin{proof}
Using Lemmas \ref{lem-product}, \ref{lem-firststep}, we have:
\begin{align*} 
\tfrac{1}{4!}(E_{21}^2 F_{21}^4)_L [b] = & \phantom{LL}(E_{21}^2)_L (F_{31}F_{11} - F_{32}F_{10})\\ 
 = & \phantom{LL} \tbinom{2}{2\ 0}((E_{21}^2)_L F_{31}) F_{11}  -\tbinom{2}{2\ 0}((F_{21}^2)_L F_{32}) F_{10} +\tbinom{2}{1\ 1}({F_{21}}_L F_{31})({F_{21}}_L F_{11}) \\ 
 &   -\tbinom{2}{1\ 1}({F_{21}}_L F_{32}) ({F_{21}}_L F_{10}) +\tbinom{2}{0\ 2}F_{31}(F_{21}^2)_L F_{11}  -\tbinom{2}{0\ 2} F_{32} (F_{21}^2)_L F_{10}\\ 
 = & \phantom{LL} \tbinom{2}{2\ 0}(-2 E_{11} ) F_{11}  -\tbinom{2}{2\ 0}(2 E_{10} ) F_{10} +\tbinom{2}{1\ 1}(-F_{10} )(-2 E_{10} ) \\ 
 &   -\tbinom{2}{1\ 1}(-F_{11}) (2 E_{11} ) +\tbinom{2}{0\ 2}F_{31}(-6 E_{31} )  -\tbinom{2}{0\ 2} F_{32} (6 E_{32} )\\ 
 = &  -2\tbinom{2}{2\ 0}(H_{11} + F_{11}E_{11} )  -2\tbinom{2}{2\ 0}(H_{10} + F_{10}E_{10} )+2 \tbinom{2}{1\ 1}F_{10}E_{10} \\ 
 &   +2\tbinom{2}{1\ 1}F_{11}E_{11} -6 \tbinom{2}{0\ 2}F_{31}6 E_{31}  -6\tbinom{2}{0\ 2} F_{32}E_{32}\\
 \equiv &  -2H_{11} -2 H_{10}  \equiv -2 H_{21}  \quad (\mathrm{mod} \ \mathcal{U}(\mathfrak{g})\mathfrak{n}_+) .
 \end{align*}
The other cases follow in the same way.
\end{proof}

\begin{Lem} \label{lem-thirdstep}
Suppose that $n,r,s,t \in \mathbb{Z}_{+}$ and $n=r+2s+3t$.  Then the following hold in $\mathcal{U}(\frak g)$:
\[ \begin{array}{l}
(E_{21}^n F_{21}^{2n})_L([a]^r[b]^s[c]^t) \\
\phantom{LLLL} \equiv  (-1)^r\tfrac{n!}{(n-r)!}\tfrac{(2n)!}{(2n-2r)!} (H_{21}-n+1) \cdots (H_{21}-n+r) (E_{21}^{n-r}F_{21}^{2(n-r)})_L ([b]^s[c]^t),  \\ \\
(E_{10}^n F_{31}^{n})_L([a]^r[b]^s[c]^t) \\
\phantom{LLLL}\equiv  \left(\tfrac{n!}{(n-r)!}\right)^2 (H_{10}-n+1) \cdots (H_{10}-n+r) (E_{10}^{n-r}F_{31}^{n-r})_L ([b]^s[c]^t)   , \\ \\
(E_{11}^n F_{32}^{n})_L([a]^r[b]^s[c]^t) \\
\phantom{LLLL} \equiv \left(\tfrac{n!}{(n-r)!}\right)^2 (H_{11}-n+1) \cdots (H_{11}-n+r) (E_{11}^{n-r}F_{32}^{n-r})_L ([b]^s[c]^t)    ,
\end{array} \]
where all the congruences are modulo $\mathcal{U}(\mathfrak{g})\mathfrak{n}_+$.
\end{Lem}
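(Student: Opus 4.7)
The plan is to handle all three congruences by the same two-step reduction: first I peel off the $[a]^r$ prefactor inside the $F$-action using the multinomial expansion of Lemma \ref{lem-product}, and then I apply Lemma \ref{lem-poly}(3) to trade the resulting $F^r$ for polynomial factors in $H$ while moving past $E^n$.

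Take the first assertion. Expanding $(F_{21}^{2n})_L([a]^r[b]^s[c]^t)$ by Lemma \ref{lem-product} gives a sum over distributions of $2n$ copies of the $F_{21}$-action among the $r+s+t$ factors. Lemma \ref{lem-firststep} supplies the nilpotency bounds $(F_{21}^k)_L[a]=0$ for $k\ge 3$, $(F_{21}^k)_L[b]=0$ for $k\ge 5$, and $(F_{21}^k)_L[c]=0$ for $k\ge 7$, so the maximum admissible total for a nonzero term is $2r+4s+6t=2n$. Since this is exactly the number of $F_{21}$'s being distributed, a pigeon-hole argument forces the unique saturated configuration in which every $[a]$ receives two, every $[b]$ four, and every $[c]$ six. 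Computing the associated multinomial coefficient and substituting the values from Lemma \ref{lem-firststep}, the sign $(-1)^r$ emerges from $((F_{21}^2)_L[a])^r=(-2F_{21})^r$ together with the $(2!)^{-r}$ from the multinomial, while the $[b]$- and $[c]$-contributions exactly reproduce what would arise in the $r=0$ computation, yielding
\[
(F_{21}^{2n})_L([a]^r[b]^s[c]^t) = (-1)^r\,\tfrac{(2n)!}{(2n-2r)!}\,F_{21}^r\,(F_{21}^{2(n-r)})_L([b]^s[c]^t).
\]

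Setting $Y := (F_{21}^{2(n-r)})_L([b]^s[c]^t)$, I then apply $(E_{21}^n)_L$ to both sides and invoke Lemma \ref{lem-poly}(3), which gives
\[
(E_{21}^n)_L(F_{21}^r Y) \in F_{21}\mathcal{U}(\mathfrak{g}) + \tfrac{n!}{(n-r)!}(H_{21}-n+r)\cdots(H_{21}-n+1)(E_{21}^{n-r})_L Y + \mathcal{U}(\mathfrak{g})E_{21}.
\]
The summand in $\mathcal{U}(\mathfrak{g})E_{21}$ lies trivially in $\mathcal{U}(\mathfrak{g})\mathfrak{n}_+$. A direct root-weight check shows the left-hand side of the desired congruence has weight zero, so Lemma \ref{lem-poly}(2) forces the $F_{21}\mathcal{U}(\mathfrak{g})$ summand, which sits in $\mathfrak{n}_-\mathcal{U}(\mathfrak{g})$, into $\mathcal{U}(\mathfrak{g})\mathfrak{n}_+$ once projected onto weight zero. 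Combining the two displays produces the first congruence.

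The other two assertions go through verbatim after replacing $F_{21},E_{21}$ by $F_{31},E_{10}$ or $F_{32},E_{11}$; the relevant nilpotency bounds are again furnished by Lemma \ref{lem-firststep}, and this time the saturated total is $r+2s+3t=n$, matching the power of $F$ being applied, so the same pigeon-hole argument pins down a single surviving configuration in the multinomial expansion. In both of these cases the sign contributions balance out cleanly and no $(-1)^r$ survives, in agreement with the stated form. I expect the main nuisance to be purely combinatorial bookkeeping: checking that the multinomial coefficients, signs from Lemma \ref{lem-firststep}, and ratios between the $[a]^r[b]^s[c]^t$ and $[b]^s[c]^t$ computations really telescope into the clean factored forms stated. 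No new conceptual ingredient beyond the three preparatory lemmas of Appendix B is required.
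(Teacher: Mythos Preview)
Your proposal is correct and follows essentially the same route as the paper: first reduce $(F^{2n})_L([a]^r[b]^s[c]^t)$ to $F^r\cdot(F^{2(n-r)})_L([b]^s[c]^t)$ via Lemma~\ref{lem-product} and the nilpotency bounds of Lemma~\ref{lem-firststep}, then apply Lemma~\ref{lem-poly}(3) and use Lemma~\ref{lem-poly}(2) on the weight-zero difference to discard the $F_{21}\mathcal{U}(\mathfrak{g})$ contribution. Your weight-zero justification is a bit more explicit than the paper's, but the argument is otherwise identical.
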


\begin{proof}
We prove only the first case.  Using Lemma \ref{lem-product} we have:
\begin{align*}
(F_{21}^{2n})_L([a]^r[b]^s[c]^t) = & \frac{(2n)!}{(2r)!(4s+6t)!} (F_{21}^{2r})_L([a]^r) (F_{21}^{4s+6t})_L([b]^s [c]^t)\\
= & \frac{(2n)!}{(2r)!(2n-2r)!} \frac{(2r)!}{(2!)^r}((F_{21}^{2})_L[a])^r (F_{21}^{2(n-r)})_L([b]^s [c]^t)\\
= & \frac{(2n)!}{2^r(2n-2r)!}((F_{21}^{2})_L[a])^r (F_{21}^{2(n-r)})_L([b]^s [c]^t)\\
= & \frac{(2n)!}{2^r(2n-2r)!}(-2F_{21})^r (F_{21}^{2(n-r)})_L([b]^s [c]^t)\\
= & (-1)^r\frac{(2n)!}{(2n-2r)!}(F_{21})^r (F_{21}^{2(n-r)})_L([b]^s [c]^t),
\end{align*}
since $(F_{21}^3)_L[a] = (F_{21}^5)_L [b] = (F_{21}^7)_L [c] = 0$ and  $(F_{21}^2)_L [a] = -2 F_{21}$ by Lemma \ref{lem-firststep}. 
 
Then we  use Lemma \ref{lem-poly} (3) with $Y= (F_{21}^{2(n-r)})_L([b]^s [c]^t)$ to obtain:
\begin{align*}
& 
(E_{21}^nF_{21}^{2n})_L([a]^r[b]^s[c]^t) \\ = & (-1)^r\tfrac{(2n)!}{(2n-2r)!}(E_{21}^n)_L\left( F_{21}^r (F_{21}^{2(n-r)})_L([b]^s [c]^t) \right)\\
\in & (-1)^r \tfrac{n!}{(n-r)!}\tfrac{(2n)!}{(2n-2r)!} (H_{21}-n+r)\cdots(H_{21}-n+1)(E_{21}^{n-r}F_{21}^{2(n-r)})_L ([b]^s [c]^t) + F_{21}\mathcal{U}(\frak g) + \mathcal{U}(\frak g)E_{21} .
\end{align*}

Now it follows from Lemma \ref{lem-poly} (2) that we have
\begin{align*}
& (E_{21}^nF_{21}^{2n})_L([a]^r[b]^s[c]^t) \\
 & \phantom{L} \equiv (-1)^r \tfrac{n!}{(n-r)!}\tfrac{(2n)!}{(2n-2r)!} (H_{21}-n+r)\cdots(H_{21}-n+1)(E_{21}^{n-r}F_{21}^{2(n-r)})_L ([b]^s [c]^t)   \quad (\mathrm{mod} \ \mathcal{U}(\mathfrak{g})\mathfrak{n}_+) .
\end{align*}

\end{proof}

 We give one more lemma.
 
 \begin{Lem} \label{lem-fourthstep}
 The following hold:
 
 \begin{eqnarray*}
 (E_{21}^4F_{21}^{8})_L ([b]^2)  &\equiv & 4!8! (2 H_{21}H_{11} + 2 H_{10}(H_{10}-1) - 6 H_{01}(H_{01}+1)) , \\
 (E_{21}^5F_{21}^{10})_L ([b][c]) &\equiv & -5!10! 2 H_{01}(H_{01}+2)(H_{21}-3) , \\
 (E_{10}^4F_{31}^4)_L ([b]^2) &\equiv &  (E_{10}^5F_{31}^5)_L ([b][c]) \equiv 0 ,\\
 (E_{11}^4F_{32}^4)_L ([b]^2) & \equiv  & (4!)^2\  2(9H_{01}(H_{01}-1)-H_{01}(H_{31}-3)), \\
 (E_{11}^5F_{32}^5)_L ([b][c]) & \equiv & (5!)^2\ 6 H_{01} (H_{01}-1) (H_{31}+2) ,
 \end{eqnarray*}
 where all the congruences are modulo $\mathcal{U}(\mathfrak{g})\mathfrak{n}_+$.
  \end{Lem}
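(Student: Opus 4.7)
The strategy for all five congruences follows the same template already used in the proof of Lemma~\ref{lem-thirdstep}: distribute $(F_{ij}^n)_L$ across the product $[b]^s[c]^t$ via the Leibniz-type formula of Lemma~\ref{lem-product}; use Lemma~\ref{lem-firststep} to discard the many multinomial terms in which some factor $(F_{ij}^k)_L[b]$ or $(F_{ij}^k)_L[c]$ vanishes for degree reasons; apply $(E_{ij}^n)_L$ by a second Leibniz expansion; and reduce modulo $\mathcal{U}(\mathfrak{g})\mathfrak{n}_+$ using Lemma~\ref{lem-poly}~(2) together with the single-factor computations of Lemma~\ref{lem-secondstep}.

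Concretely, for $(E_{21}^4F_{21}^8)_L([b]^2)$ the Leibniz expansion is $\sum_{k_1+k_2=8}\binom{8}{k_1,k_2}(F_{21}^{k_1})_L[b]\cdot(F_{21}^{k_2})_L[b]$, and since $(F_{21}^k)_L[b]=0$ for $k\geq 5$ by Lemma~\ref{lem-firststep}, only the multi-index $(4,4)$ survives, contributing $8!(F_{31}F_{11}-F_{32}F_{10})^2$. One then applies $(E_{21}^4)_L$ to this product of four $F$'s: for each $F\in\{F_{31},F_{11},F_{32},F_{10}\}$, the element $(E_{21}^j)_L F$ is non-zero only for a short list of $j$ determined by weight, so the apparent explosion of multi-indices $(j_1,\ldots,j_4)$ collapses to a manageable sum. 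Collecting the zero-weight contributions, reducing by Lemma~\ref{lem-poly}~(2), and using the coroot identities $H_{21}=2H_{10}+3H_{01}$ and $H_{11}=H_{10}+3H_{01}$ produces the stated expression $4!\,8!(2H_{21}H_{11}+2H_{10}(H_{10}-1)-6H_{01}(H_{01}+1))$.

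The identity for $(E_{21}^5F_{21}^{10})_L([b][c])$ is handled in exactly the same way: one has $(F_{21}^k)_L[b]=0$ for $k\geq 5$ and $(F_{21}^k)_L[c]=0$ for $k\geq 7$, so the only surviving Leibniz split is $(k_1,k_2)=(4,6)$, yielding a specific multiple of $(F_{31}F_{11}-F_{32}F_{10})(F_{31}^2F_{01}-F_{32}F_{31}H_{01}-F_{32}^2E_{01})$ to which $(E_{21}^5)_L$ is then applied as before. The $(E_{11}^4F_{32}^4)_L([b]^2)$ and $(E_{11}^5F_{32}^5)_L([b][c])$ cases mirror the $F_{21}$ computations, now using the vanishing thresholds $(F_{32}^k)_L[b]=0$ for $k\geq 3$ and $(F_{32}^k)_L[c]=0$ for $k\geq 4$, together with the coroot identities $H_{32}=H_{10}+2H_{01}$ and $H_{31}=H_{10}+H_{01}$. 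Finally, for the vanishing statements $(E_{10}^4F_{31}^4)_L([b]^2)\equiv 0$ and $(E_{10}^5F_{31}^5)_L([b][c])\equiv 0$, the first Leibniz step produces an expression built from $(F_{31}^2)_L[b]=-2!(F_{31}E_{11}-F_{21}E_{01})$ and $(F_{31}^3)_L[c]=3!(F_{31}(H_{32}+1)E_{01}+F_{32}E_{01}^2-F_{31}^2E_{32})$; after applying $(E_{10}^n)_L$ and reducing modulo $\mathcal{U}(\mathfrak{g})\mathfrak{n}_+$, the zero-weight residue is shown by direct computation to cancel completely.

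The main obstacle is bookkeeping: the two nested Leibniz expansions generate many multinomial terms with intricate coefficients, and the final collapse to the compact polynomial on the right-hand side depends on the linear dependencies $H_{11}=H_{10}+3H_{01}$, $H_{21}=2H_{10}+3H_{01}$, $H_{31}=H_{10}+H_{01}$, $H_{32}=H_{10}+2H_{01}$ among the coroots of $G_2$. The practical remedy is to tabulate in advance the short list of non-zero values of $(E_{ij}^j)_L F_{kl}$, $(E_{ij}^j)_L E_{kl}$, and $(E_{ij}^j)_L H_{kl}$ needed for each of the five cases, so that each Leibniz sum reduces at once to a handful of genuinely non-zero contributions which can be simplified by hand or, if desired, verified by computer algebra.
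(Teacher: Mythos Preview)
Your proposal is correct and follows essentially the same approach as the paper: apply Lemma~\ref{lem-product} to reduce $(F_{ij}^n)_L$ on the product via Lemma~\ref{lem-firststep} (so that only the balanced multi-index survives), then apply $(E_{ij}^n)_L$ term by term to the resulting product of root vectors, and finally reduce modulo $\mathcal{U}(\mathfrak{g})\mathfrak{n}_+$ using Lemma~\ref{lem-poly}~(2) and the coroot identities. The paper carries out in detail only the first congruence $(E_{21}^4F_{21}^{8})_L ([b]^2)$ and omits the others as analogous, exactly as you indicate.
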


\begin{proof}
We prove the first part only.  From Lemma \ref{lem-firststep}, we have:
\begin{align*}
(F_{21}^8)_L ([b]^2) = & \tbinom{8}{4\ 4} \left((F_{21}^4)_L[b]\right)^2  =  8!(F_{31}F_{11}-F_{32}F_{10})^2\\
  = & 8!(F_{31}^2F_{11}^2 - 2F_{32}F_{31}F_{11}F_{10} - 2F_{32}F_{31}F_{21} +F_{32}^2F_{10}^2) . 
\end{align*}
	
We thus obtain:
\begin{align*}
\tfrac{1}{8!}(E_{21}^4F_{21}^8)_L ([b]^2) = & (E_{21}^4)_L(F_{31}^2F_{11}^2 - 2F_{32}F_{31}F_{11}F_{10} - 2F_{32}F_{31}F_{21} +F_{32}^2F_{10}^2).
     \end{align*}
     This equals the following element modulo $\mathcal{U}(\mathfrak{g})\mathfrak{n}_+$:
\[ \begin{array}{ll}
 \phantom{L} \tbinom{4}{3 1 0 0}(-6E_{32})(-F_{10})F_{11}^2 & -2 \tbinom{4}{3 1 0 0}(6E_{31}) (-F_{10})F_{11}F_{10}\\
   + \tbinom{4}{3 0 1 0}(-6E_{32})F_{31}(-2E_{10})F_{11} & -2 \tbinom{4}{3 0 1 0}(6E_{31})F_{31}(-2E_{10})F_{10}\\
    + \tbinom{4}{2 2 0 0}(-2E_{11})(-2E_{11})F_{11}^2 & -2 \tbinom{4}{2 2 0 0} (2E_{10})(-2E_{11})F_{11}F_{10}\\
    + \tbinom{4}{2 1 1 0}(-2E_{11})(-F_{10})(-2E_{10})F_{11} & -2 \tbinom{4}{2 1 1 0} (2E_{10})(-F_{10})(-2E_{10})F_{10}\\
    + \tbinom{4}{2 0 2 0}(-2E_{11})F_{31}(-6E_{31})F_{11} & -2 \tbinom{4}{2 0 2 0} (2E_{10})F_{31}(-6E_{31})F_{10}\\
    -2\tbinom{4}{3 1 0 0}(6E_{31})(-F_{10})F_{21} &+\tbinom{4}{3 1 0 0}(6E_{31})(-F_{11})F_{10}^2\\
    - 2\tbinom{4}{3 0 1 0}(6E_{31})F_{31}H_{21} & + \tbinom{4}{3 0 1 0}(6E_{31})F_{32}(-2E_{11})F_{10}\\
    - 2\tbinom{4}{2 2 0 0}(2E_{10})(-2E_{11})F_{21} & + \tbinom{4}{2 2 0 0}(2E_{10})(2E_{10})F_{10}^2\\
    -2\tbinom{4}{2 1 1 0}(2E_{10})(-F_{10})H_{21} & + \tbinom{4}{2 1 1 0}(2E_{10})(-F_{11})(-2E_{11})F_{10}\\
  & + \tbinom{4}{2 0 2 0}(2E_{10})F_{32}(-6E_{32})F_{10},     \end{array} \]
where we have omitted the term $\tbinom{4}{1 3 0 0}(-F_{10})(-6E_{32})F_{11}^2 \in F_{10}\mathcal{U}(\mathfrak{g})$, which belongs to $\mathcal{U}(\mathfrak{g})\mathfrak{n}_+$ by Lemma \ref{lem-poly} (2), as well as similar terms which also belong to $\mathcal{U}(\mathfrak{g})\mathfrak{n}_+$.

We now see that $\tfrac{1}{4! 8!}(E_{21}^4F_{21}^8)_L ([b]^2)$ is equal to the following element modulo $\mathcal{U}(\mathfrak{g})\mathfrak{n}_+$:
\[\begin{array}{llll}
\phantom{LL} E_{32}F_{10}F_{10}^2 &+E_{31}F_{10}F_{11}F_{10} &+2E_{31}F_{10}F_{21}	&- E_{31}F_{11}F_{10}^2\\
+2E_{32}F_{31}E_{10}F_{11} &+4E_{31}F_{31}E_{10}F_{10} &-2E_{31}F_{31}H_{21}	&-2E_{31}F_{32}E_{11}F_{10}\\
+ E_{11}^2F_{11}^2 				&+2E_{10}E_{11}F_{11}F_{10} &+2E_{10}E_{11}F_{21}	&+ E_{10}^2F_{10}^2\\
-2E_{11}F_{10}E_{10}F_{11} &-4E_{10}F_{10}E_{10}F_{10} &+2E_{10}F_{10}H_{21}	&+2E_{10}F_{11}E_{11}F_{10}\\
+3E_{11}F_{31}E_{31}F_{11}	&+6E_{10}F_{31}E_{31}F_{10} 	&-3E_{10}F_{32}E_{32}F_{10}.
\end{array} \]
Again modulo $\mathcal{U}(\mathfrak{g})\mathfrak{n}_+$, this is equal to
\[ \begin{array}{l}6H_{01}							-4H_{10}						+2H_{21}				+2 H_{10}(H_{10}-1)
 +2H_{11}(H_{11}-1)	+4H_{31}H_{10}		\\	-2H_{21}H_{31}	
 -18H_{01}						+2H_{10}(H_{11}+1)	-4H_{10}				
 										-4H_{10}^2					+2H_{10}H_{21}.
\end{array} \]
After simplifying terms in this expression, we finally obtain:
\[
\tfrac{1}{4! 8!}(E_{21}^4F_{21}^8)_L ([b]^2) \equiv   2H_{21}H_{11} -6 H_{01}(H_{01}+1) + 2H_{10}(H_{10}-1)  \quad (\mathrm{mod} \ \mathcal{U}(\mathfrak{g})\mathfrak{n}_+) .
 \]
\end{proof}

\bibliographystyle{siam}

\end{document}